\newtheorem{theorem}{Theorem}[section]
\newtheorem{lemma}[theorem]{Lemma}
\newtheorem{proposition}[theorem]{Proposition}
\newtheorem{corollary}[theorem]{Corollary}
\newtheorem{conjecture}[theorem]{Conjecture}
\newtheorem{evidence}[theorem]{Evidence}
\theoremstyle{definition}
\newtheorem{example}[theorem]{Example}
\theoremstyle{remark}
\newtheorem{remark}[theorem]{Remark}
\begin{document}
	
	%\title[Numerical semigroups and prime $k$-tuplets]{Numerical semigroups generated by prime $k$-tuplets}
	\title{A Frobenius problem suggested by prime $k$-tuplets (extended version)}%[Numerical semigroups and prime $k$-tuplets]
	
	\author{Aureliano M. Robles-P\'erez\thanks{Departamento de Matem\'atica Aplicada \& Instituto de Matem\'aticas (IMAG), Universidad de Granada, 18071-Granada, Spain. \newline E-mail: \textbf{arobles@ugr.es} (\textit{corresponding author}); ORCID: \textbf{0000-0003-2596-1249}.}
		\mbox{ and} Jos\'e Carlos Rosales\thanks{Departamento de \'Algebra \& Instituto de Matem\'aticas (IMAG), Universidad de Granada, 18071-Granada, Spain. \newline E-mail: \textbf{jrosales@ugr.es}; ORCID: \textbf{0000-0003-3353-4335}.} }
	
	\date{\today}
	
	\maketitle
	
	\begin{abstract}
		We study the Frobenius problem for certain $k$-tuplets, which include prime $k$-tuplets, in particular prime triplets and prime quadruplets. Moreover, we analyze some properties of the numerical semigroups associated with these tuplets.
	\end{abstract}
	\noindent {\bf Keywords:} Prime $k$-tuplet, prime triplet, prime quadruplet, Frobenius number, numerical semigroup, Ap\'ery set.
	
	\medskip
	
	\noindent{\it 2010 AMS Classification:} 11D07, 11N05. 	
	
	\section{Introduction}
	
	A \textit{prime triplet} is a sequence of three prime numbers $(p_1,p_2,p_3)$ such that $p_1<p_2<p_3$ and $p_3-p_1=6$. In particular, the sequences must be of the form $(p,p+2,p+6)$ or $(p,p+4,p+6)$. Since one of every three consecutive odd numbers is a multiple of three, and therefore not prime (except for three itself), we have that the prime triplets are the closest possible groupings of three prime numbers with the exceptions of $(2,3,5)$ and $(3,5,7)$.
	
	Analogously, a \textit{prime quadruplet} is a sequence $(p_1,p_2,p_3,p_4)$, of four prime numbers, such that $p_1<p_2<p_3<p_4$ and $p_4-p_1=8$. In this case, the sequences must be of the form $(p,p+2,p+6,p+8)$ and are the closest possible groupings of four prime numbers with the exceptions of $(2,3,5,7)$ and $(3,5,7,11)$.
	
	To formalize the concept of $k$-tuplet, let us follow \cite{forbes}: if $k$ is an integer greater than one, then $s(k)$ is the smallest number for which there exists a set of $k$ ordered integers $B_k=\{b_1,\ldots,b_k\}$ such that $b_1=0$, $b_k=s(k)$, and $\{b_1 \bmod q, \ldots, b_k \bmod q\} \not= \{0,1,\ldots,q-1 \}$ (that is, $\{b_1 \bmod q, \ldots, b_k \bmod q\}$ is not a complete set of residues $\bmod\ q$) for any prime number $q$. From here, a \textit{prime $k$-tuplet} is a sequence of consecutive prime numbers, $P_k=(p_1,\ldots,p_k)$, such that $p_i-p_1=b_i$ for all $i\in\{1,\ldots,k\}$ (observe that $p_k-p_1=s(k)$). Thus, roughly speaking, a prime $k$-tuplet is a sequence of $k$ consecutive prime numbers such that the difference between the first and the last (that is, $s(k)$) is as small as possible. Sometimes $B_k$ is called an \textit{admissible set} and $P_k$ is named an \textit{admissible constellations (of consecutive prime numbers)} (see \cite{erdos-riesel}).
	
	Let us observe that, if we remove the condition on the function $s(k)$, then other admissible constellations are possible. For example, we have $(p,p+4)$ (cousin primes), $(p,p+6)$ (sexy primes), $(p,p+6,p+12)$, etcetera.
	
	Let us also note that, from the definition, it is easy to see that we can only obtain a finite number of sequences of $k$ consecutive prime numbers $(p_1,\ldots,p_k)$ such that $p_k-p_1 < s(k)$. In contrast, the generalized Hardy-Littlewood conjecture (see \cite{hardy-littlewood, hardy-wright}) implies that, for every admissible set $B_k$, we can obtain infinitely many prime $k$-tuplets. In fact, all the current evidence confirm the conjecture for any admissible constellation (whether or not considering the condition on $s(k)$). 
	
	Now, let $(a_1,\ldots,a_e)$ be a sequence of positive integers which satisfies that $\gcd(a_1,\ldots,a_e)=1$. Then a classical problem in additive number theory is the Frobenius problem: what is the greatest integer $\mathrm{F}(a_1,\ldots,a_e)$ which is not an element of the set $a_1{\mathbb N}+\ldots+a_e{\mathbb N}$? Although this problem is solved for $e=2$ (see \cite{sylvester}), it is well known that it is not possible to find a polynomial formula in order to compute $\mathrm{F}(a_1,\ldots,a_e)$ if $e\geq3$ (see \cite{curtis}). Therefore, many efforts have been made to obtain partial results or to develop algorithms to get the answer of this question (see \cite{alfonsin}).
	
	Among others, the main objective of this work is the search for the solution of the Frobenius problem for prime triplets (Section~\ref{triplets}) and prime quadruplets (Section~\ref{quadruplets}). We also comment some results for $k$-tuplets with $k\geq5$ (Section~\ref{tuplets}). Moreover, from our computations, we conjecture that the Frobenius number of a $k$-tuplet is always given by a polynomial of degree two.
	
	In order to achieve our purpose, we use the theory of numerical semigroups (which is closely related with the Frobenius problem) and, in particular, the Ap\'ery set of a numerical semigroup (Section~\ref{ns}).
	
	It is convenient to note that the restriction on prime numbers is not essential to achieve our purposes. In fact, in Section~\ref{supplement} we show results for $k$-tuplets that only verify the condition on admissible constellations, that is, if $\{p_1,p_2,\ldots,p_k\}$ is a $k$-tuplet, then $\{p_1-p_1,p_2-p_1,\ldots,p_k-p_1\}$ has to be an admissible set. %However, since the reasoning and computations are very similar in all cases, here we have preferred to focus on $k$-tuplets associated with the prime cases.

	 \section{Preliminaries (on numerical semigroups)}\label{ns}
	
	Let $\mathbb{Z}$ be the set of integers and $\mathbb{N} = \{ z\in\mathbb{Z} \mid z\geq 0\}$. A submonoid of $(\mathbb{N},+)$ is a subset $M$ of $\mathbb{N}$ such that is closed under addition and contains de zero element. A \textit{numerical semigroup} is a submonoid of $(\mathbb{N},+)$ such that $\mathbb{N}\setminus S=\{n\in\mathbb{N} \mid n\not\in S\}$ is finite.
	
	Now, let $S$ be a numerical semigroup. Since $\mathbb{N}\setminus S$ is a finite set, we can define two invariants of $S$. Namely, the \textit{Frobenius number of $S$} is the greatest integer that does not belong to $S$, denoted by $\mathrm{F}(S)$, and the \textit{genus of $S$} is the cardinality of $\mathbb{N}\setminus S$, denoted by $\mathrm{g}(S)$. Let us note that, in number theory, it is common to use the term \textit{Sylvester number} instead of the term genus (see \cite{komatsu1}).
	
	If $X$ is a non-empty subset of $\mathbb{N}$, then we denote by $\langle X \rangle$ the submonoid of $(\mathbb{N},+)$ generated by $X$, that is,
	\[ \langle X \rangle=\big\{\lambda_1x_1+\cdots+\lambda_nx_n \mid n\in\mathbb{N}\setminus \{0\}, \ x_1,\ldots,x_n\in X, \ \lambda_1,\ldots,\lambda_n\in \mathbb{N}\big\}. \]
	It is well known (see Lemma~2.1 of \cite{springer}) that $\langle X \rangle$ is a numerical semigroup if and only if $\gcd(X)=1$.
	
	If $S$ is a numerical semigroup and $S=\langle X \rangle$, then we say that $X$ is a \textit{system of generators of $S$}. Moreover, if $S\not=\langle Y \rangle$ for any subset $Y\subsetneq X$, then we say that $X$ is a \textit{minimal system of generators} of $S$. In Theorem~2.7 of \cite{springer} it is shown that each numerical semigroup admits a unique minimal system of generators and that such a system is finite. We denote by $\mathrm{msg}(S)$ the minimal system of generators of $S$. The cardinality of $\mathrm{msg}(S)$, denoted by $\mathrm{e}(S)$, is the \textit{embedding dimension} of $S$.
	
	The (extended) Frobenius problem for a numerical semigroup $S$ consists of finding formulas that allow us to compute $\mathrm{F}(S)$ and $\mathrm{g}(S)$ in terms of $\mathrm{msg}(S)$. As in the case of the Frobenius problem for sequences, such formulas are well known for $\mathrm{e}(S)=2$, but it is not possible to find polynomial formulas when $e(S)\geq3$, except for particular families of numerical semigroups.
	
	Now, let us define a very useful tool to describe a numerical semigroup $S$. If $n\in S\setminus\{0\}$, then the \textit{Ap\'ery set of $n$ in $S$} (named in honor of \cite{apery}) is $\mathrm{Ap}(S,n)=\{s\in S \mid s-n\not\in S\}$. 
	
	The following result is Lemma~2.4 of \cite{springer}.
	
	\begin{proposition}\label{prop02}
		Let $S$ be a numerical semigroup and $n\in S\setminus\{0\}$. Then the cardinality of $\mathrm{Ap}(S,n)$ is $n$. Moreover,
		\[ \mathrm{Ap}(S,n)=\{w(0)=0, w(1), \ldots, w(n-1)\}, \] 
		where $w(i)$ is the least element of $S$ congruent with $i$ modulo $n$.
	\end{proposition}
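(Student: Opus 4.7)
The plan is to prove that the map $i \mapsto w(i)$ is a well-defined bijection from $\{0,1,\ldots,n-1\}$ onto $\mathrm{Ap}(S,n)$, which simultaneously establishes both the cardinality claim and the explicit description.

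First I would verify that $w(i)$ is well defined for every $i \in \{0,1,\ldots,n-1\}$. Since $\mathbb{N}\setminus S$ is finite, for each residue class $i$ there are only finitely many non-negative integers congruent to $i$ modulo $n$ that fail to lie in $S$; in particular the set $\{s\in S \mid s\equiv i \pmod n\}$ is non-empty, so it has a least element $w(i)$. Note that $w(0)=0$ trivially.

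Next I would show that $w(i)\in \mathrm{Ap}(S,n)$ for every $i$. If $w(i)<n$, then $w(i)-n<0\notin S$, so $w(i)\in\mathrm{Ap}(S,n)$. Otherwise $w(i)-n$ is a non-negative integer congruent to $i$ modulo $n$ and strictly less than $w(i)$; by minimality of $w(i)$ it cannot belong to $S$, so again $w(i)\in\mathrm{Ap}(S,n)$. The elements $w(0),\ldots,w(n-1)$ are pairwise distinct because they lie in distinct residue classes modulo $n$.

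Then I would prove the reverse inclusion: any $s\in\mathrm{Ap}(S,n)$ equals $w(i)$ for $i=s\bmod n$. Suppose not; then $s>w(i)$, so $s-w(i)=kn$ for some positive integer $k$. Writing $s-n=(k-1)n+w(i)$, and using that $n,w(i)\in S$, we conclude $s-n\in S$, contradicting $s\in\mathrm{Ap}(S,n)$. Therefore $\mathrm{Ap}(S,n)=\{w(0),\ldots,w(n-1)\}$ has exactly $n$ elements.

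There is no real obstacle here; the only point requiring care is the case distinction $w(i)<n$ versus $w(i)\geq n$ when showing $w(i)\in\mathrm{Ap}(S,n)$, together with the standard observation that closure of $S$ under addition, combined with $n\in S$, lets us shift elements by multiples of $n$ inside $S$. These ingredients assemble the bijection immediately.
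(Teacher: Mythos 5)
Your proof is correct and complete. The paper itself gives no proof of this proposition --- it is quoted as Lemma~2.4 of the Rosales--Garc\'{\i}a-S\'anchez reference --- and your argument (well-definedness of $w(i)$ from finiteness of $\mathbb{N}\setminus S$, membership in $\mathrm{Ap}(S,n)$ by minimality, distinctness via residue classes, and the reverse inclusion using closure of $S$ under addition of $n$) is exactly the standard one found there.
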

	
	The knowledge of $\mathrm{Ap}(S,n)$ allows us to solve the problem of membership of an integer to the numerical semigroup $S$. In fact, if $x\in\mathbb{Z}$, then $x\in S$ if and only if $x\geq w(x\bmod n)$. Moreover, we have the following result from \cite{brauer} (first formula) and \cite{selmer} (second one).
	
	\begin{proposition}\label{prop03}
		Let $S$ be a numerical semigroup and let $n\in S\setminus\{0\}$. Then
		\begin{enumerate}
			\item $\mathrm{F}(S)=\max(\mathrm{Ap}(S,n))-n$,
			\item $\mathrm{g}(S)=\frac{1}{n}(\sum_{w\in \mathrm{Ap}(S,n)} w)-\frac{n-1}{2}$.
		\end{enumerate}
	\end{proposition}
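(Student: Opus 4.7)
The plan is to exploit the membership criterion recorded just before the statement: an integer $x$ lies in $S$ if and only if $x \ge w(x \bmod n)$, where $w(i)$ denotes the unique element of $\mathrm{Ap}(S,n)$ congruent to $i$ modulo $n$ (Proposition~\ref{prop02}). In other words, within each residue class $i \in \{0,1,\ldots,n-1\}$ the elements of $S$ form the arithmetic progression $w(i), w(i)+n, w(i)+2n, \ldots$, while the gaps of $S$ in class $i$ are precisely $i, i+n, \ldots, w(i)-n$ (an empty list when $w(i)=i$, which always occurs for $i=0$ since $w(0)=0$).

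For item~(1), observe that the largest gap inside class $i$ is $w(i)-n$ (whenever this quantity is non-negative), because the very next integer of this class is $w(i) \in S$. Since the residue classes modulo $n$ partition $\mathbb{N}$, the Frobenius number is the maximum of these per-class maxima, and so
\[ \mathrm{F}(S) = \max_{0 \le i \le n-1}\bigl(w(i)-n\bigr) = \max(\mathrm{Ap}(S,n)) - n. \]

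For item~(2), counting the gaps class by class yields $\tfrac{w(i)-i}{n}$ gaps in class $i$ (including $i=0$, which contributes zero). Summing over $i$ and using $\sum_{i=0}^{n-1} i = \tfrac{n(n-1)}{2}$ gives
\[ \mathrm{g}(S) = \sum_{i=0}^{n-1} \frac{w(i)-i}{n} = \frac{1}{n}\sum_{w\in\mathrm{Ap}(S,n)} w - \frac{n-1}{2}. \]

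The argument is essentially a double-counting exercise organised by residue class, so I do not anticipate any serious obstacle. The only point to verify with care is that Proposition~\ref{prop02} really does set up a bijection between $\{0,1,\ldots,n-1\}$ and $\mathrm{Ap}(S,n)$ that respects residues modulo $n$; once that compatibility is invoked, both formulas drop out immediately from the per-class description of $S$ and of its gaps.
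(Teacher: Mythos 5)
Your argument is correct: the residue-class description of $S$ coming from Proposition~\ref{prop02} (each class $i$ meets $S$ exactly in $w(i), w(i)+n, w(i)+2n,\ldots$) immediately yields both formulas by taking the maximum gap and counting gaps class by class, and this is the standard proof of Selmer's identities. The paper itself offers no proof, simply citing \cite{selmer}, so there is nothing to compare beyond noting that your double-counting by residue classes is exactly the expected route.
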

	
	From this proposition, it is clear that, if we have an explicit description of $\mathrm{Ap}(S,n)$, then we have the solution of the Frobenius problem for $S$.
	
	\begin{remark}\label{remark-komatsu-1}
		In \cite{komatsu1,komatsu2} the author uses Ap\'ery sets to compute the Frobenius number and the Sylvester number (that is, the genus) of numerical semigroups generated by arithmetic progressions and by arithmetic progressions with initial gaps. In particular, our corresponding results in Corollary~\ref{cor13} can be deduced from \cite{komatsu2} (see Remarks~\ref{remark-komatsu-2} and \ref{remark-komatsu-3}).
	\end{remark}
	
	\begin{remark}\label{remark-komatsu-1b}
		Different generalizations of the Frobenius number can be considered. For example, from \cite{komatsu3} (and other references therein), it is possible to define the \textit{$p$-Frobenius number} of a numerical semigroup $S=\langle a_1,a_2,\ldots,a_e\rangle$ as the greatest integer that can be represented at most $p$ ways by a linear combination with non-negative integer coefficients of $a_1,a_2,\ldots,a_e$. Obviously, the $0$-Frobenius number is the classical Frobenius number. Since in \cite{komatsu3} the author uses Ap\'ery sets to obtain his results, a possible future work would be to compute the $p$-Frobenius number for prime $k$-tuplets.
	\end{remark}
	
	Let $S$ be a numerical semigroup. Following the notation introduced in \cite{JPAA}, we say that an integer $x$ is a \textit{pseudo-Frobenius number of $S$} if $x\in \mathbb{Z}\setminus S$ and $x+s\in S$ for all $s\in S\setminus\{0\}$. We denote by $\mathrm{PF}(S)$ the set of all the pseudo-Frobenius numbers of $S$. The cardinality of $\mathrm{PF}(S)$ is an important invariant of $S$ (see \cite{barucci}) that is the so-called \textit{type} of $S$ and it is denoted by $\mathrm{t}(S)$.
	
	Let $S$ be a numerical semigroup. We define over $\mathbb{Z}$ the following binary relation: $a\leq_S b$ if $b-a\in S$. As stated in \cite{springer}, it is clear that $\leq_S$ is a non-strict partial order relation (that is, reflexive, transitive, and anti-symmetric). 
	
	The following result is Proposition~7 of \cite{froberg} (see also Proposition~2.20 of \cite{springer}) and gives us a characterization of the pseudo-Frobenius numbers in terms of the maximal elements of $\mathrm{Ap}(S,n)$ with respect to the relation $\leq_S$.
	
	\begin{proposition}\label{prop04}
		Let $S$ be a numerical semigroup and $n\in S\setminus\{0\}$. Then 
		\[\mathrm{PF}(S)=\{w-n \mid w\in \mathrm{Maximals}_{\leq_S} (\mathrm{Ap}(S,n))  \}.\]
	\end{proposition}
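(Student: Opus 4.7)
The plan is a double containment argument, pivoting on the characterisation of $\mathrm{Ap}(S,n)$ from Proposition~\ref{prop02} as the set of smallest representatives in each residue class modulo $n$. I will work with the equivalent reformulation: $x\in\mathrm{PF}(S)$ if and only if $x+n$ is a maximal element of $\mathrm{Ap}(S,n)$ with respect to $\leq_S$.

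For the easy direction, suppose $x\in\mathrm{PF}(S)$ and set $w=x+n$. Since $n\in S\setminus\{0\}$, the pseudo-Frobenius condition forces $w\in S$; moreover $w-n=x\notin S$, so $w\in\mathrm{Ap}(S,n)$. To check maximality, I would take $w'\in\mathrm{Ap}(S,n)$ with $w\leq_S w'$. If $w'\neq w$, then $s:=w'-w$ lies in $S\setminus\{0\}$, so the definition of $\mathrm{PF}(S)$ forces $x+s=w'-n$ to belong to $S$, contradicting $w'\in\mathrm{Ap}(S,n)$. Hence $w'=w$, and $w$ is maximal.

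For the harder direction, fix $w\in\mathrm{Maximals}_{\leq_S}\mathrm{Ap}(S,n)$ and set $x=w-n$. Then $x\notin S$ by definition of the Ap\'ery set. Given any $s\in S\setminus\{0\}$, I would take the unique element $w'\in\mathrm{Ap}(S,n)$ with $w'\equiv w+s \pmod n$ (available thanks to Proposition~\ref{prop02}) and write $w+s=w'+kn$ for some $k\in\mathbb{N}$. A case split on $k$ closes the argument: if $k=0$ then $w'-w=s\in S$, so $w\leq_S w'$, and maximality forces $w=w'$, hence $s=0$, a contradiction; if $k\geq 1$ then $x+s=w+s-n=w'+(k-1)n\in S$, as required.

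The only step that genuinely uses the hypothesis is the elimination of the case $k=0$ in the harder direction, which is where maximality of $w$ does its work; the main obstacle is organising the proof so that this dichotomy appears cleanly. Everything else is a direct unwinding of the definitions of $\mathrm{Ap}(S,n)$, $\leq_S$, and $\mathrm{PF}(S)$.
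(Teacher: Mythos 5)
Your proof is correct. Note that the paper itself gives no argument here --- it simply cites Proposition~2.20 of the Rosales--Garc\'{\i}a-S\'anchez book --- so the comparison is with the standard textbook proof, and your argument is essentially that one: both inclusions are handled by unwinding the definitions, with maximality entering exactly where you say it does. The only stylistic difference is in the harder direction: the usual argument avoids your case split on $k$ by arguing contrapositively --- if $w-n+s\notin S$ for some $s\in S\setminus\{0\}$, then $w+s\in S$ and $(w+s)-n\notin S$, so $w+s\in\mathrm{Ap}(S,n)$ and $w\leq_S w+s$ with $s\neq 0$, contradicting maximality directly. Your route through the Ap\'ery representative $w'$ of the class of $w+s$ is a little longer but equally valid; just make sure you record explicitly that $k\geq 0$ follows from $w+s\in S$ together with the minimality of $w'$ in its residue class (Proposition~\ref{prop02}), since that is what licenses writing $w+s=w'+kn$ with $k\in\mathbb{N}$ and concluding $(k-1)n\in S$ when $k\geq 1$.
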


	\section{Prime triplets}\label{triplets}
	
	Let us recall that a prime triplet is of the form $(p,p+2,p+6)$ or of the form $(p,p+4,p+6$). In the following proposition we improve this fact.
	
	\begin{proposition}\label{prop01}
		We have that:
		\begin{enumerate}
			\item If $(p,p+2,p+6)$ is a prime triplet, then $p=6k+5$, with $k\in\mathbb{N}$.
			\item If $(p,p+4,p+6)$ is a prime triplet, then $p=6k+7$, with $k\in\mathbb{N}$.
		\end{enumerate}
	\end{proposition}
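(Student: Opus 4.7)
The plan is to use elementary modular arithmetic, exploiting the fact that every prime $p > 3$ is congruent to $1$ or $5$ modulo $6$ (otherwise it would be divisible by $2$ or $3$). So the strategy is: first reduce to the case $p > 3$ by inspection of small primes, then use congruences modulo $3$ to rule out one of the two residue classes modulo $6$ in each of the two cases.

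First I would verify that in both cases we may assume $p \geq 5$. A direct check rules out $p = 2$ and $p = 3$ because the resulting sequences contain obviously composite numbers (e.g.\ $4$, $8$, $9$), and in case (2) also $p = 5$ (since $9$ is not prime). Hence $p > 3$, and therefore $p \equiv 1 \pmod 6$ or $p \equiv 5 \pmod 6$.

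For case (1), I would examine $p+2$ modulo $3$. If $p \equiv 1 \pmod 6$, then $p+2 \equiv 3 \equiv 0 \pmod 3$, and since $p+2 > 3$, this would force $p+2$ to be composite, contradicting primality of the middle term. Hence $p \equiv 5 \pmod 6$, which is exactly the claim $p = 6k + 5$ with $k \in \mathbb{N}$. For case (2), the argument is symmetric: if $p \equiv 5 \pmod 6$, then $p+4 \equiv 9 \equiv 0 \pmod 3$ and $p+4 > 3$, so $p+4$ would be composite. Hence $p \equiv 1 \pmod 6$, so $p = 6m + 1$ for some integer $m \geq 1$ (the value $m = 0$ giving $p = 1$ is not prime); rewriting this as $p = 6(m-1) + 7 = 6k + 7$ with $k = m - 1 \in \mathbb{N}$ gives the conclusion.

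There is no real obstacle here; the argument is essentially a case-by-case check modulo $6$. The only thing to be careful about is the bookkeeping needed to rewrite $p \equiv 1 \pmod 6$ in the form $6k+7$ with $k \in \mathbb{N}$, and the corresponding verification that the small prime exceptions ($p \leq 5$) really are excluded by the requirement that all three terms be prime.
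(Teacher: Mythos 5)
Your proof is correct and follows essentially the same route as the paper: reduce to $p\equiv 1$ or $5 \pmod 6$ for $p>3$, then eliminate the wrong residue class by observing that the relevant term ($p+2$ or $p+4$) would be a multiple of $3$ greater than $3$. The paper's write-up is just terser, handling case (2) by symmetry.
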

	
	\begin{proof}
		It is clear that, if $p\in\mathbb{N}\setminus\{1\}$ is not divisible by two or three, then there exists $k\in\mathbb{N}$ such that $p=6k+5$ or $p=6k+7$.
		\begin{enumerate}
			\item If $p=6k+7$, then $p+2=6k+9$ is multiple of three and can not be a prime number. Consequently, if $(p,p+2,p+6)$ is a prime triplet, then $p=6k+5$.
			\item The reasoning is similar to the above case. \qedhere
		\end{enumerate}
	\end{proof}
	
	As a consequence of the previous proposition, we have that a prime triplet is of one of the following two forms.
	\begin{enumerate}
		\item $(6k+5,6k+7,6k+11)$, with $k\in\mathbb{N}$.
		\item $(6k+7,6k+11,6k+13)$, with $k\in\mathbb{N}$.
	\end{enumerate}
	
	Since $\gcd({6k+5,6k+7,6k+11})=\gcd({6k+7,6k+11,6k+13})=1$, we can define two families of numerical semigroups ($\mathcal{T}_1$ and $\mathcal{T}_2$) associated with the prime triplets.
	\begin{itemize}
		\item $S\in\mathcal{T}_1$ if $S=\langle 6k+5,6k+7,6k+11 \rangle$, with $k\in\mathbb{N}$.
		\item $S\in\mathcal{T}_2$ if $S=\langle 6k+7,6k+11,6k+13 \rangle$, with $k\in\mathbb{N}$.
	\end{itemize}
	It is easy to check that, if $S\in\mathcal{T}_1 \cup \mathcal{T}_2$, then $\mathrm{e}(S)=3$. Moreover, from \cite{froberg}, we deduce the following result.
	
	\begin{proposition}\label{prop05}
		Let $S$ be a numerical semigroup such that $\mathrm{e}(S)=3$. Then $\mathrm{t}(S)\in\{1,2\}$. In addition, if $S=\langle n_1,n_2,n_3 \rangle$, where $n_1,n_2,n_3$ are pairwise relatively prime numbers, then $\mathrm{t}(S)=2$.
	\end{proposition}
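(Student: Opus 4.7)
The statement splits naturally into two independent assertions, each of which is a direct application of a classical result from the theory of numerical semigroups.

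For the bound $\mathrm{t}(S)\in\{1,2\}$, I would invoke the Fr\"oberg--Gottlieb--H\"aggkvist inequality $\mathrm{t}(S)\le \mathrm{e}(S)-1$, which is the main theorem of \cite{froberg}. Setting $\mathrm{e}(S)=3$ yields $\mathrm{t}(S)\le 2$. The lower bound $\mathrm{t}(S)\ge 1$ is immediate from the definition of pseudo-Frobenius number: $\mathrm{F}(S)\in\mathbb{Z}\setminus S$, and for every $s\in S\setminus\{0\}$ we have $\mathrm{F}(S)+s>\mathrm{F}(S)$, hence $\mathrm{F}(S)+s\in S$, so $\mathrm{F}(S)\in\mathrm{PF}(S)$.

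For the second assertion, I would argue contrapositively: suppose $\mathrm{t}(S)=1$, equivalently that $S$ is symmetric. By Herzog's classification of three-generated symmetric numerical semigroups, $S$ is then a complete intersection and, after a suitable permutation of $n_1,n_2,n_3$, there exist positive integers $d>1$ and $c_1,c_2$ with $\gcd(c_1,c_2)=1$, $n_1=dc_1$, $n_2=dc_2$, and $n_3\in\langle c_1,c_2\rangle$. In particular $\gcd(n_1,n_2)=d>1$, contradicting pairwise coprimality. Hence $\mathrm{t}(S)=2$.

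The main obstacle is conceptual rather than computational: one must invoke two nontrivial structural facts, namely the type bound in embedding dimension $3$ and Herzog's gluing description of the symmetric case. A purely elementary route via Proposition~\ref{prop04}---computing the $\leq_S$-maximal elements of $\mathrm{Ap}(S,n_1)$ directly from the generators $n_1,n_2,n_3$ and the witnesses $c_i=\min\{m>0 : m n_i\in\langle n_j : j\ne i\rangle\}$---is feasible, and one can check that pairwise coprimality forces every relation $c_in_i=r_{ij}n_j+r_{ik}n_k$ to have both $r_{ij},r_{ik}>0$, producing two distinct maximal elements; however, this route essentially reproduces the content of the cited results through a case analysis of the minimal presentation, so quoting the literature is by far the cleanest option.
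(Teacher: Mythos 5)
The paper gives no proof of this proposition at all---it simply states that the result is deduced from \cite{froberg}---so your proposal is more detailed than what it is being compared against. Your overall strategy (cite the embedding-dimension-3 type bound, then rule out the symmetric case via Herzog's complete-intersection description) is the standard literature route and the second half of your argument is correct: a symmetric $3$-generated numerical semigroup is a complete intersection, hence a gluing $\langle dc_1,dc_2,n_3\rangle$ with $d>1$, which forces two of the minimal generators to share the factor $d$ and contradicts pairwise coprimality. The observation that $\mathrm{F}(S)\in\mathrm{PF}(S)$, so $\mathrm{t}(S)\geq 1$, is also fine.

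However, the lemma you invoke for the first assertion is false as stated: there is no general inequality $\mathrm{t}(S)\leq \mathrm{e}(S)-1$. This very paper supplies counterexamples: by Corollary~\ref{cor19}, the semigroup $S=\langle 4k+5,4k+7,4k+11,4k+13\rangle$ with $k\geq 1$ has $\mathrm{e}(S)=4$ and $\mathrm{t}(S)=5$, and Section~\ref{tuplets} records sextuplet semigroups with $\mathrm{e}(S)=6$ and $\mathrm{t}(S)=9$. In fact \cite{froberg} contains examples showing the type cannot be bounded by any function of the embedding dimension alone; what that paper actually proves (building on Herzog's analysis of the relations among three generators) is precisely the embedding-dimension-$3$ statement $\mathrm{t}(S)\leq 2$, not a general bound that you then specialise. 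Your conclusion is therefore correct, but the justification offered for it is a non-theorem; you should either cite the $\mathrm{e}(S)=3$ result of \cite{froberg} directly or carry out the elementary Ap\'ery-set analysis you sketch at the end, which shows that $\mathrm{Maximals}_{\leq_S}(\mathrm{Ap}(S,n_1))$ has at most two elements when $\mathrm{e}(S)=3$.
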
	
	
	An immediate consequence of the above comments and results is the following proposition.
	
	\begin{proposition}\label{prop06}
		If $S\in\mathcal{T}_1 \cup \mathcal{T}_2$, then $\mathrm{e}(S)=3$ and $\mathrm{t}(S)=2$.
	\end{proposition}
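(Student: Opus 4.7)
The plan is to reduce both conclusions to two easy verifications: that no generator is redundant, and that the three generators are pairwise coprime. Once these are in hand, Proposition~\ref{prop05} delivers the type immediately.

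For $\mathrm{e}(S)=3$, I would show directly that, in each family, no generator can be written as a non-negative integer combination of the other two. With three generators $a<b<c$ whose pairwise differences lie in $\{2,4,6\}$, the only elements of $\langle a,b\rangle$ that could conceivably equal $c$ are $0$, $a$, or $b$ (any combination using both $a$ and $b$, or a multiple $2a$, already exceeds $c$ except in trivially checkable cases). Since $c\notin\{0,a,b\}$, each generator is indispensable and the listed three numbers form the minimal system of generators; thus $\mathrm{e}(S)=3$.

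For $\mathrm{t}(S)=2$, I invoke Proposition~\ref{prop05}: it suffices to check that the three generators are pairwise coprime. All six generators that appear (in either family) are odd, so $2$ divides no pair. Any other common prime divisor of two generators must divide their difference, and those differences lie in $\{2,4,6\}$; the only possibility is $p=3$ dividing a pair whose difference is $6$, namely $(6k+5,6k+11)$ in $\mathcal{T}_1$ and $(6k+7,6k+13)$ in $\mathcal{T}_2$. But $6k+5\equiv 2\pmod 3$ and $6k+7\equiv 1\pmod 3$, so neither is divisible by $3$. Hence the generators are pairwise coprime in each family and Proposition~\ref{prop05} gives $\mathrm{t}(S)=2$.

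There is no real obstacle: the statement is essentially a verification that the hypotheses of Proposition~\ref{prop05} are met by both families. The only mild care required is in the coprimality step, where one has to keep track of the small pairwise differences and rule out the primes $2$ and $3$ by a residue argument.
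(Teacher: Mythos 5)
Your proposal is correct and follows essentially the same route as the paper: the authors also treat $\mathrm{e}(S)=3$ as a direct verification (``it is easy to check'') and obtain $\mathrm{t}(S)=2$ by applying Proposition~\ref{prop05} to the pairwise coprime generators. Your residue argument modulo $2$ and $3$ is exactly the check the paper leaves implicit.
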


	\subsection{First case (family $\mathcal{T}_1$)}\label{case-one}
	
	In this subsection, we are interested in studying the numerical semigroups of the form $S=\langle 6k+5,6k+7,6k+11 \rangle$, where $k\in\mathbb{N}$.
	
	Straightforward computations lead to the following result.
	
	\begin{lemma}\label{lem07}
		If $k\in\mathbb{N}$, then we have the equalities
		\begin{enumerate}
			\item $3(6k+7)=2(6k+5)+1(6k+11)$;
			\item $(2k+2)(6k+11)=(2k+3)(6k+5)+1(6k+7)$;
			\item $2(6k+7)+(2k+1)(6k+11)=(2k+5)(6k+5)$.
		\end{enumerate}
	\end{lemma}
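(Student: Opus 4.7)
The plan is simply to verify each identity by expanding both sides as polynomials in $k$ and matching coefficients. Since every generator $6k+5$, $6k+7$, $6k+11$ is linear in $k$, and the multipliers appearing are at most linear in $k$, each side of each equality is a polynomial in $k$ of degree at most two, so the check reduces to comparing the coefficients of $k^2$, $k$, and the constant term.

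For item (1) both sides are linear in $k$: I would expand $3(6k+7)=18k+21$ and $2(6k+5)+(6k+11)=12k+10+6k+11=18k+21$. For item (2) I would expand $(2k+2)(6k+11)=12k^{2}+34k+22$ and $(2k+3)(6k+5)+(6k+7)=12k^{2}+28k+15+6k+7=12k^{2}+34k+22$. For item (3) I would expand the left-hand side $2(6k+7)+(2k+1)(6k+11)=12k+14+12k^{2}+28k+11=12k^{2}+40k+25$ and compare with $(2k+5)(6k+5)=12k^{2}+40k+25$.

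There is no genuine obstacle in this proof; it is entirely routine bookkeeping, which is why the authors introduce it with the phrase \emph{straightforward computations}. The interest of the lemma is not in its proof but in the role each identity will play later: identity (1) rewrites a multiple of $6k+7$ in terms of the other two generators (relevant to identifying which multiples of $6k+7$ can fail to be in the Apéry set modulo $6k+5$), while identities (2) and (3) give the first congruence collapses among multiples of $6k+11$ and mixed combinations with $6k+7$. In the subsequent analysis these relations will be used to pin down $\mathrm{Ap}(S,6k+5)$ for $S=\langle 6k+5,6k+7,6k+11\rangle$, and from there to apply Proposition~\ref{prop03} in order to compute $\mathrm{F}(S)$ and $\mathrm{g}(S)$. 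Accordingly, the write-up of the proof can be limited to displaying the three expansions above.
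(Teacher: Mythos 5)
Your verification is correct — all three expansions check out — and it is exactly what the paper intends, since the authors give no proof beyond the remark that ``straightforward computations'' yield the result. Nothing further is needed.
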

	
	Let us now see the key result of this subsection.
	
	\begin{theorem}\label{thm08}
		If $k\in\mathbb{N}$ and $S=\langle 6k+5,6k+7,6k+11 \rangle$, then
		\[ \mathrm{Ap}(S,6k+5)=\left\{ a(6k+7)+b(6k+11) \mid (a,b)\in C \right\}, \]
		where $ C = \big( \{0,1,2\} \times \{0,1,\ldots,2k+1\} \big) \setminus \{ (2,2k+1)\} $.
	\end{theorem}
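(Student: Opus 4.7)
The plan is to exhibit the correct number of elements inside $\mathrm{Ap}(S,6k+5)$ and to conclude by cardinality. By Proposition~\ref{prop02}, $|\mathrm{Ap}(S,6k+5)|=6k+5$, and a direct count gives $|C|=3(2k+2)-1=6k+5$ as well. Moreover, since $\gcd(6k+7,6k+11)=1$ and the ranges $a\leq 2$, $b\leq 2k+1$ are too narrow to accommodate a collision, the assignment $(a,b)\mapsto a(6k+7)+b(6k+11)$ is injective on $C$. It therefore suffices to prove that each such image lies in $\mathrm{Ap}(S,6k+5)$, after which the two sets must coincide.

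First I would check that the $6k+5$ images lie in pairwise distinct residue classes modulo $n:=6k+5$. Since $6k+7\equiv 2$ and $6k+11\equiv 6\pmod n$ and $n$ is odd, the class of $a(6k+7)+b(6k+11)$ is determined by $a+3b\pmod n$. For $(a,b),(a',b')\in C$ we have $|(a-a')+3(b-b')|\leq 2+3(2k+1)=n$, so the congruence $(a-a')+3(b-b')\equiv 0\pmod n$ admits only the integer values $0$ or $\pm n$. The value $0$ forces $(a,b)=(a',b')$, whereas $\pm n$ is attained only for the pair $\{(0,0),(2,2k+1)\}$, the very collision predicted by Lemma~\ref{lem07}(3) and the reason $(2,2k+1)$ has been excised from $C$.

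The principal step is then to verify that $w-n\notin S$ for every $(a,b)\in C$, where $w:=a(6k+7)+b(6k+11)$. Suppose for contradiction $w-n=\alpha n+\beta(6k+7)+\gamma(6k+11)$ with $\alpha,\beta,\gamma\geq 0$ and set $w':=\beta(6k+7)+\gamma(6k+11)\leq w-n$. Arguing as above, $(\beta-a)+3(\gamma-b)=mn$ for some $m\in\mathbb{Z}$; substituting and using Lemma~\ref{lem07}(1) in the form $3(6k+7)-(6k+11)=2n$ yields the clean identity
\[ w'-w=n\bigl[\,m(6k+7)-2(\gamma-b)\,\bigr]. \]
The hypothesis $w'-w\leq -n$ then has to be played against $\beta\geq 0$ and $\gamma\geq 0$. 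This last case split is where the main obstacle sits: when $m=0$ or $m\geq 1$ the smallness $a\leq 2$ should produce a direct numerical contradiction (for $m\geq 1$, the key inequality will reduce to $m(6k+11)\leq 2a-3\leq 1$), whereas the case $m\leq -1$ should funnel down to $m=-1$, $\gamma=0$, $b=2k+1$, $a=2$, thereby singling out exactly the pair that has been removed from $C$. Once this analysis is complete, the cardinality argument from the first paragraph concludes the proof.
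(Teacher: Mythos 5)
Your proposal is correct, but it runs in the opposite direction from the paper's proof. The paper establishes the inclusion $\mathrm{Ap}(S,6k+5)\subseteq\{a(6k+7)+b(6k+11)\mid (a,b)\in C\}$: by Lemma~\ref{lem07}, any representation $w=a(6k+7)+b(6k+11)$ with $a\geq 3$, or $b\geq 2k+2$, or $(a,b)=(2,2k+1)$ can be rewritten as a combination that uses $6k+5$, so $w-(6k+5)\in S$ and $w$ is not an Ap\'ery element; equality then follows because $|C|\leq 6k+5=|\mathrm{Ap}(S,6k+5)|$ --- note that this direction needs no injectivity, only the upper bound on $|C|$. You instead prove the reverse inclusion: that every candidate $w$ with $(a,b)\in C$ satisfies $w-(6k+5)\notin S$, which requires ruling out \emph{all} representations of $w-(6k+5)$ rather than exhibiting a single one, and you must additionally verify injectivity (your residue computation modulo $6k+5$, or simply $\gcd(6k+7,6k+11)=1$ with $|a-a'|\leq 2$) to know the candidate set has full cardinality. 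I checked your key identity $w'-w=n\left[m(6k+7)-2(\gamma-b)\right]$ and the three cases: $m\geq 1$ reduces to $m(6k+11)\leq 2a-3\leq 1$ via $\beta\geq 0$, $m=0$ forces $\gamma-b\geq 1$ against $3(\gamma-b)\leq a\leq 2$, and $m\leq -1$ forces $m=-1$ with $(\beta,\gamma)=(0,0)$ and $(a,b)=(2,2k+1)$, the excluded pair; all are sound. The trade-off is clear: the paper's route exploits the three syzygies of Lemma~\ref{lem07} to make the hard inclusion nearly immediate, while yours is more computational but self-contained, effectively re-deriving the role of those identities through the congruence $(\beta-a)+3(\gamma-b)\equiv 0 \pmod{6k+5}$. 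You should, however, write out the case analysis in full rather than gesturing at it, since it is the entire content of the ``principal step.''
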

	
	\begin{proof}
		From Lemma~\ref{lem07}, we easily deduce that $\mathrm{Ap}(S,6k+5)$ is a subset of $\left\{a(6k+7)+b(6k+11) \mid (a,b)\in C \right\}$. Now then, since the cardinality of $C$ is less than or equal to $3(2k+2)-1=6k+5$, and taking into consideration Proposition~\ref{prop02}, we get that $\mathrm{Ap}(S,6k+5)=\left\{a(6k+7)+b(6k+11) \mid (a,b)\in C \right\}$.
	\end{proof}
	
	Let us observe that we can rewrite several elements of $\mathrm{Ap}(S,6k+5)$ as follows.
	\begin{itemize}
		\item $(6k+7)+b(6k+11)=(b+1)(6k+11)-4$, for all $b\in\{0,1,\ldots,2k+1\}$.
		\item $2(6k+7)+b(6k+11)=(b+2)(6k+11)-8$, for all $b\in\{0,1,\ldots,2k\}$.
	\end{itemize}
	Thus, we can easily describe the Apéry set by arranging its elements in increasing order.
	
	\begin{corollary}\label{cor08}
		If $k\in\mathbb{N}$ and $S=\langle 6k+5,6k+7,6k+11 \rangle$, then
		\[ \mathrm{Ap}(S,6k+5)=\{0; (6k+11)-4,6k+11; 2(6k+11)-8,2(6k+11)-4,2(6k+11); \]
		\[ \ldots; (2k+1)(6k+11)-8,(2k+1)(6k+11)-4,(2k+1)(6k+11); \]
		\[ (2k+2)(6k+11)-8,(2k+2)(6k+11)-4 \}. \]
	\end{corollary}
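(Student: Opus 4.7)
The plan is to feed the description of $\mathrm{Ap}(S, 6k+5)$ supplied by Theorem~\ref{thm08} into the two identities highlighted immediately before the corollary, and then sort the resulting list by inspection.

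First I would split the index set $C$ by the first coordinate $a$. The slice $a=0$ gives the multiples $b(6k+11)$ for $b \in \{0, 1, \ldots, 2k+1\}$. Applying the identity $(6k+7) + b(6k+11) = (b+1)(6k+11) - 4$ to the slice $a=1$ turns its contents (for $b \geq 1$) into $j(6k+11) - 4$ with $j \in \{2, \ldots, 2k+2\}$, together with the isolated element $6k+7$ at $b=0$. The second identity $2(6k+7) + b(6k+11) = (b+2)(6k+11) - 8$, applied to the slice $a=2$ (where $b$ ranges only over $\{0, 1, \ldots, 2k\}$, since $(2, 2k+1)$ is removed from $C$), turns its contents into $j(6k+11) - 8$ with $j \in \{2, \ldots, 2k+2\}$.

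Next, I would regroup the elements by the parameter $j$. For each $j \in \{2, 3, \ldots, 2k+1\}$ all three shifted values are present, producing the triple $j(6k+11) - 8,\ j(6k+11) - 4,\ j(6k+11)$. For $j = 2k+2$ the multiple $j(6k+11)$ is absent (this is precisely the exclusion of $(2, 2k+1)$ from $C$), so only the two shifted values survive at the top. The three smallest elements, coming from $(0,0),(1,0),(0,1)$, are $0,\ 6k+7,\ 6k+11$.

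The final step is to check that the grouping above already reproduces the monotonic listing in the statement. Within a triple the inequalities are trivial, so it suffices to verify that $6k+11 < 2(6k+11) - 8$ and, for consecutive blocks, $j(6k+11) < (j+1)(6k+11) - 8$. Both reduce to $6k+11 > 8$, which holds for every $k \in \mathbb{N}$. There is no real obstacle here; the mild points that require care are the boundary case $k=0$ (where the block of intermediate levels is empty and the stated sequence must still be read literally) and confirming that the exclusion of the pair $(2, 2k+1)$ from $C$ accounts exactly for the truncation of the top level from three elements to two.
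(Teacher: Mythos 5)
Your proof is correct and follows essentially the same route as the paper, which simply substitutes the two displayed identities into the index set $C$ of Theorem~\ref{thm08} and lists the resulting elements in increasing order by the level $j$. One small misattribution worth fixing: the top level $j=2k+2$ is missing the element $(2k+2)(6k+11)$ because $b$ only runs up to $2k+1$ in the slice $a=0$, whereas the exclusion of $(2,2k+1)$ from $C$ is what prevents the further element $2(6k+7)+(2k+1)(6k+11)=(2k+3)(6k+11)-8$ (a multiple of $6k+5$ by Lemma~\ref{lem07}) from appearing; this does not affect your final list, which is the correct one.
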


	In this way we can identify a certain pattern. In fact, in the previous corollary, we have used ``;'' to separate certain groups of numbers.
	
	As mentioned in Section~\ref{ns}, knowledge of the Ap\'ery set allows us to obtain information about the numerical semigroup. Thus, in the current case we have the following result. (Let us observe that, from Corollary~\ref{cor09}, we recover Proposition~\ref{prop06}.)
	
	\begin{corollary}\label{cor09}
		If $k\in\mathbb{N}$ and $S=\langle 6k+5,6k+7,6k+11 \rangle$, then
		\begin{enumerate}
			\item $\mathrm{PF}(S) = \{ 12k^2+28k+9,12k^2+28k+13 \}$;
			\item $\mathrm{F}(S) = 12k^2+28k+13$;
			\item $\mathrm{g}(S) = 6k^2+16k+8$.
		\end{enumerate}
	\end{corollary}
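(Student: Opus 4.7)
The plan is to extract everything directly from the explicit description of $\mathrm{Ap}(S,6k+5)$ given in Corollary~\ref{cor08}, using Propositions~\ref{prop03} and \ref{prop04}. Write $n = 6k+5$ and $B = 6k+11$ throughout.

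For part (2), I would simply identify the numerically largest element of $\mathrm{Ap}(S,n)$. Inspection of Corollary~\ref{cor08} shows this is $(2k+2)B - 4$. Then by Proposition~\ref{prop03}(1),
\[ \mathrm{F}(S) = (2k+2)(6k+11) - 4 - (6k+5) = 12k^2 + 28k + 13. \]

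For part (1), by Proposition~\ref{prop04} I need to identify $\mathrm{Maximals}_{\leq_S}\mathrm{Ap}(S,n)$. The candidates are $w_2 = (2k+2)B - 4$ and $w_1 = (2k+2)B - 8$. The element $w_2$ is maximal because it is the global maximum of the Apéry set (any other element $w'$ satisfies $w' - w_2 < 0$, hence $w' - w_2 \notin S$). For $w_1$, the only element of the Apéry set strictly larger than $w_1$ is $w_2 = w_1 + 4$, and $4 \notin S$ because $4 < 6k+5 = \min(\mathrm{msg}(S))$; hence $w_1$ is also maximal. Proposition~\ref{prop06} guarantees $\mathrm{t}(S) = 2$, so these are \emph{all} the maximals. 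Subtracting $n$ from each gives $\mathrm{PF}(S) = \{12k^2 + 28k + 9,\ 12k^2 + 28k + 13\}$.

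For part (3), I would apply Proposition~\ref{prop03}(2) and compute
\[ \sum_{w \in \mathrm{Ap}(S,n)} w = \sum_{b=0}^{2k+1} bB + \sum_{b=0}^{2k+1}\bigl((6k+7)+bB\bigr) + \sum_{b=0}^{2k}\bigl(2(6k+7)+bB\bigr), \]
organizing by the value of $a \in \{0,1,2\}$ in Theorem~\ref{thm08}. Routine simplification of the arithmetic progressions yields a cubic in $k$; dividing by $n=6k+5$ produces a clean quadratic, and subtracting $(n-1)/2 = 3k+2$ yields $\mathrm{g}(S) = 6k^2 + 16k + 8$.

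The only step demanding care is the arithmetic for (3); the logical content is all in recognizing the two maximals of the Apéry set for (1), which is where Proposition~\ref{prop06} plays its key role by fixing the cardinality of $\mathrm{PF}(S)$ in advance and thus saving us from having to argue non-maximality of any other element.
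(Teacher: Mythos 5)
Your proposal is correct and follows essentially the same route as the paper: read off $\mathrm{Maximals}_{\leq_S}(\mathrm{Ap}(S,6k+5))$ and apply Propositions~\ref{prop03} and~\ref{prop04}, then sum the Ap\'ery set by the value of $a\in\{0,1,2\}$ for the genus. Your justification of why $(2k+2)(6k+11)-8$ and $(2k+2)(6k+11)-4$ are the only maximals (global maximum, $4\notin S$, and the cardinality bound $\mathrm{t}(S)=2$ from Proposition~\ref{prop06}) is in fact more explicit than the paper's, which simply asserts the set of maximals.
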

	
	\begin{proof}
		\begin{enumerate}
			\item By Theorem~\ref{thm08}, we have that $\mathrm{Maximals}_{\leq_S} (\mathrm{Ap}(S,6k+5)) = \{(6k+7)+(2k+1)(6k+11), 2(6k+7)+2k(6k+11)\}$. Thereby, from Proposition~\ref{prop04}, we can assert that
			$\mathrm{PF}(S) = \{(6k+7)+(2k+1)(6k+11)-(6k+5), 2(6k+7)+2k(6k+11)-(6k+5)\} = \{ 12k^2+28k+13,12k^2+28k+9 \}$.
			
			\item It is clear that $\mathrm{F}(S) = \max(\mathrm{PF}(S))$ and, therefore, $\mathrm{F}(S) = 12k^2+28k+13$.
			
			\item From Theorem~\ref{thm08}, we have that
			\[ \mathrm{Ap}(S,6k+5) = \{ 0,6k+11,\ldots,(1+2k)(6k+11), \]
			\[ 6k+7,(6k+7)+(6k+11),\ldots,(6k+7)+(1+2k)(6k+11), \]
			\[ 2(6k+7),2(6k+7)+(6k+11),\ldots,2(6k+7)+2k(6k+11) \}. \]
			Now, by Proposition~\ref{prop03}, we get that
			\[\mathrm{g}(S) = \frac{1}{6k+5} \bigg(6k+11+\cdots+(1+2k)(6k+11)+6k+7+(6k+7)+(6k+11)+ \]
			\[ \cdots+(6k+7)+(1+2k)(6k+11)+2(6k+7)+2(6k+7)+(6k+11)+ \]
			\[ \cdots+2(6k+7)+2k(6k+11) \bigg) - \frac{(6k+5)-1}{2} = 6k^2+16k+8.\] \qedhere 
		\end{enumerate}
	\end{proof}
	
	\begin{remark}\label{case1}
		From Corollary~\ref{cor09}, we deduce that, if $p=6k+5$ with $k\in\mathbb{N}$ (in particular, if $(p,p+2,p+6)$ is a prime triplet), then $\mathrm{F}(p,p+2,p+6)=\frac{p^2+4p-6}{3}$.
	\end{remark}
	
	We finish with an illustrative example.
	
	\begin{example}\label{exmp10}
		If $k=1$, then $S=\langle 11,13,17 \rangle$, $\mathrm{PF}(S) = \{ 49,53 \}$, $\mathrm{F}(S) = 53$, and $\mathrm{g}(S) = 30$ (by Corollary~\ref{cor09}). Moreover, from Corollary~\ref{cor08}, we know that $\mathrm{Ap}(S,11) = \{0,13,17,26,30,34,43,47,51,60,64\}$.
	\end{example}

	\subsection{Second case (family $\mathcal{T}_2$)}\label{case-two}
	
	Now we study numerical semigroups of the form $S=\langle 6k+7,6k+11,6k+13 \rangle$, where $k\in\mathbb{N}$.
	
	The results are similar to those of Subsection~\ref{case-one}. Therefore, we omit the proofs.
	
	\begin{lemma}\label{lem11}
		If $k\in\mathbb{N}$, then we have the equalities
		\begin{enumerate}
			\item $3(6k+11)=1(6k+7)+2(6k+13)$;
			\item $(2k+3)(6k+13)=(2k+4)(6k+7)+1(6k+11)$;
			\item $2(6k+11)+(2k+1)(6k+13)=(2k+5)(6k+7)$.
		\end{enumerate}
	\end{lemma}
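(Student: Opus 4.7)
\noindent\textbf{Proof plan for Lemma~\ref{lem11}.} The three claimed equalities are polynomial identities in $k$, so the plan is simply to expand both sides and check that the coefficients of $k$ and the constant terms agree. For (1), the left-hand side is $18k+33$, and the right-hand side is $(6k+7)+(12k+26)$, which is also $18k+33$. For (2), the left-hand side expands to $12k^{2}+44k+39$, and the right-hand side expands to $(12k^{2}+38k+28)+(6k+11)=12k^{2}+44k+39$. For (3), both sides equal $12k^{2}+44k+35$ after expansion. No case splits or nontrivial manipulation is required; the computation mirrors that of Lemma~\ref{lem07} with the roles of the generators permuted, which is exactly why the authors say the proof may be omitted.

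Although the verification itself is routine, it is worth noting \emph{why} these three particular identities are chosen, since that is what guides the check. They are the analogues, for the family $\mathcal{T}_{2}$, of the three relations in Lemma~\ref{lem07}: identity (1) writes $3(6k+11)$ as a combination involving at most two factors of $6k+13$ and at most one of $6k+7$; identity (2) writes $(2k+3)(6k+13)$ using only one factor of $6k+11$; and identity (3) writes a combination with two factors of $6k+11$ and $(2k+1)$ factors of $6k+13$ as a pure multiple of $6k+7$. These are precisely the relations one would need in the sequel to control the Ap\'ery set $\mathrm{Ap}(S,6k+7)$ by bounding the exponents of $6k+11$ and $6k+13$ that can appear, in parallel with Theorem~\ref{thm08}.

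There is no genuine obstacle here: the only risk is a small arithmetic slip, which is easy to guard against by noting that the quadratic terms on both sides of (2) and (3) must come only from products of the two linear factors in $k$, forcing $12k^{2}$ on each side, and then checking the linear and constant coefficients independently. Once the three identities are verified, the lemma follows.
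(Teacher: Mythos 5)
Your verification is correct and matches the paper's approach exactly: the paper omits the proof, noting it consists of the same ``straightforward computations'' as Lemma~\ref{lem07}, and your expansions of both sides of each identity (all three check out: $18k+33$, $12k^2+44k+39$, and $12k^2+44k+35$ respectively) are precisely those computations. Nothing further is needed.
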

	
	\begin{theorem}\label{thm12}
		If $k\in\mathbb{N}$ and $S=\langle 6k+7,6k+11,6k+13 \rangle$, then
		\[ \mathrm{Ap}(S,6k+7)=\left\{ a(6k+11)+b(6k+13) \mid (a,b)\in C \right\}, \]
		where	$ C = \big( \{0,1,2\} \times \{0,1,\ldots,2k+2\} \big) \setminus \{ (2,2k+1),(2,2k+2)\} $.
	\end{theorem}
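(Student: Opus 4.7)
The plan is to imitate, step by step, the proof of Theorem~\ref{thm08}, using Lemma~\ref{lem11} as the substitute for Lemma~\ref{lem07}. The strategy has two parts: first establish the inclusion $\mathrm{Ap}(S,6k+7)\subseteq\{a(6k+11)+b(6k+13)\mid (a,b)\in C\}$, and then close the argument by matching cardinalities via Proposition~\ref{prop02}.

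For the inclusion, any $s\in\mathrm{Ap}(S,6k+7)$ can be written as $s=\alpha(6k+7)+\beta(6k+11)+\gamma(6k+13)$ with $\alpha,\beta,\gamma\in\mathbb{N}$, and the defining condition $s-(6k+7)\notin S$ forces $\alpha=0$. Lemma~\ref{lem11}(1) rewrites $3(6k+11)$ as $(6k+7)+2(6k+13)$, so $\beta\geq 3$ would give $s-(6k+7)\in S$; hence $\beta\leq 2$. In the same way, Lemma~\ref{lem11}(2) rules out $\gamma\geq 2k+3$, so $\gamma\leq 2k+2$. Finally, Lemma~\ref{lem11}(3) gives $2(6k+11)+(2k+1)(6k+13)=(2k+5)(6k+7)$, excluding $(\beta,\gamma)=(2,2k+1)$; adding $(6k+13)$ on both sides yields $2(6k+11)+(2k+2)(6k+13)=(2k+5)(6k+7)+(6k+13)$, which rules out $(2,2k+2)$ as well. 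This confirms that the Apéry set is contained in the claimed image.

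For the counting step, $|C|=3(2k+3)-2=6k+7$, while Proposition~\ref{prop02} asserts $|\mathrm{Ap}(S,6k+7)|=6k+7$. Since $\mathrm{Ap}(S,6k+7)$ sits inside the image of the map $(a,b)\mapsto a(6k+11)+b(6k+13)$ on $C$, and this image has at most $|C|=6k+7$ elements, the chain of inequalities $6k+7=|\mathrm{Ap}(S,6k+7)|\leq|\text{image}|\leq|C|=6k+7$ collapses into equalities, giving both the reverse inclusion and the distinctness of the listed values.

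I expect no substantive obstacle, since the argument is a direct transcription of the one used for family $\mathcal{T}_1$. The only subtlety worth flagging is the presence of \emph{two} exclusions in $C$ rather than one: the range of the second coordinate is now $\{0,1,\ldots,2k+2\}$, one index larger than in the previous case, which is why the obstruction from Lemma~\ref{lem11}(3) has to be extended by a single generator $(6k+13)$ to eliminate the extra pair $(2,2k+2)$.
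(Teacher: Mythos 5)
Your proof is correct and follows exactly the route the paper intends: the paper omits the proof of this theorem, referring to the argument for Theorem~\ref{thm08}, and your transcription of that argument (inclusion via Lemma~\ref{lem11} plus the cardinality count $|C|=3(2k+3)-2=6k+7$ matched against Proposition~\ref{prop02}) is accurate, including the extra observation that the exclusion of $(2,2k+2)$ follows from Lemma~\ref{lem11}(3) by adding $6k+13$ to both sides.
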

	
	Let us observe that
	\begin{itemize}
		\item $(6k+11)+b(6k+13)=(b+1)(6k+13)-2$, for all $b\in\{0,1,\ldots,2k+2\}$;
		\item $2(6k+11)+b(6k+13)=(b+2)(6k+11)-4$, for all $b\in\{0,1,\ldots,2k\}$.
	\end{itemize}
	From here, we describe the Apéry set by arranging its elements in increasing order.

	\begin{corollary}\label{cor12}
		If $k\in\mathbb{N}$ and $S=\langle 6k+7,6k+11,6k+13 \rangle$, then
		\[ \mathrm{Ap}(S,6k+7)=\{0; (6k+13)-2,6k+13; 2(6k+13)-4,2(6k+13)-2,2(6k+13); \]
		\[ \ldots; (2k+2)(6k+13)-4,(2k+2)(6k+13)-2,(2k+2)(6k+13); (2k+3)(6k+13)-2 \}. \]
	\end{corollary}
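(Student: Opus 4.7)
The plan is to combine Theorem~\ref{thm12} with the two identities displayed just above Corollary~\ref{cor12} in order to reorganise the elements $a(6k+11)+b(6k+13)$ indexed by $(a,b)\in C$ as a list of multiples of $6k+13$ with small corrections $0$, $-2$, or $-4$. (Note that the second identity should read $2(6k+11)+b(6k+13)=(b+2)(6k+13)-4$, which is what makes the regrouping possible.)

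First I would use the identities to classify every element of $\mathrm{Ap}(S,6k+7)$ according to the least multiple of $6k+13$ that bounds it from above: an element with $a=0$, $b=j$ equals $j(6k+13)$; one with $a=1$, $b=j-1$ equals $j(6k+13)-2$; and one with $a=2$, $b=j-2$ equals $j(6k+13)-4$. The ranges of $b$ in each row translate into ranges of $j$: namely $j\in\{0,\ldots,2k+2\}$ in the first, $j\in\{1,\ldots,2k+3\}$ in the second, and $j\in\{2,\ldots,2k+2\}$ in the third (this last range reflects the removal of the pairs $(2,2k+1)$ and $(2,2k+2)$ from $C$).

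Grouping by $j$ then produces exactly the listing in the statement: a single element $0$ at $j=0$; two elements at $j=1$, namely $6k+11=(6k+13)-2$ and $6k+13$; three consecutive elements $j(6k+13)-4$, $j(6k+13)-2$, $j(6k+13)$ for each $j\in\{2,\ldots,2k+2\}$; and a final single element $(2k+3)(6k+13)-2$ at $j=2k+3$. The total count is $1+2+3(2k+1)+1=6k+7$, in agreement with the cardinality of $\mathrm{Ap}(S,6k+7)$ given by Proposition~\ref{prop02}, and so no element is counted twice.

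The only remaining verification is that the list is genuinely in increasing order, which reduces to checking $(j+1)(6k+13)-4>j(6k+13)$, i.e.\ $6k+13>4$. This is immediate for every $k\in\mathbb{N}$, and I expect it to be the easy part; the main (mild) obstacle is the bookkeeping at the boundary values $j=0,1$ and $j=2k+3$, where the excluded pairs in $C$ cause the ``level'' to be incomplete and the pattern to break.
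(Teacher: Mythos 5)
Your proposal is correct and follows exactly the route the paper intends: the paper omits this proof but supplies the two rewriting identities immediately before the corollary precisely so that the elements of $\mathrm{Ap}(S,6k+7)$ from Theorem~\ref{thm12} can be regrouped by the multiple of $6k+13$ they approach, just as you do (and as is done explicitly for Corollary~\ref{cor08} in the first family). You also correctly identify the typo in the second displayed identity (it should end in $(b+2)(6k+13)-4$) and, implicitly, the corresponding typo in the corollary's list, where the second entry of the $j=2$ block should read $2(6k+13)-2$.
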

	
	\begin{corollary}\label{cor13}
		If $k\in\mathbb{N}$ and $S=\langle 6k+7,6k+11,6k+13 \rangle$, then
		\begin{enumerate}
			\item $\mathrm{PF}(S) = \{ 12k^2+32k+15,12k^2+38k+30 \}$;
			\item $\mathrm{F}(S) = 12k^2+38k+30$;
			\item $\mathrm{g}(S) = 6k^2+20k+16$.
		\end{enumerate}
	\end{corollary}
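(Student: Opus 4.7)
The plan is to follow the template of Corollary~\ref{cor09}, using Theorem~\ref{thm12} together with Propositions~\ref{prop03} and~\ref{prop04}. For items~(1) and~(2) I will compute $\mathrm{PF}(S)$ via Proposition~\ref{prop04}, which amounts to locating the $\leq_S$-maximal elements of $\mathrm{Ap}(S,6k+7)$. Under the parametrisation $(a,b)\mapsto a(6k+11)+b(6k+13)$ furnished by Theorem~\ref{thm12} (which is bijective since $|C|=3(2k+3)-2=6k+7$ equals $|\mathrm{Ap}(S,6k+7)|$), such an element is maximal precisely when neither $(a+1,b)$ nor $(a,b+1)$ lies in $C=\bigl(\{0,1,2\}\times\{0,\ldots,2k+2\}\bigr)\setminus\{(2,2k+1),(2,2k+2)\}$. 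A short inspection leaves only two candidates: $(2,2k)$, blocked on one side by $a\leq 2$ and on the other by the excluded point $(2,2k+1)$, and $(1,2k+2)$, blocked by the excluded point $(2,2k+2)$ and by $b\leq 2k+2$. Evaluating $a(6k+11)+b(6k+13)-(6k+7)$ at these two pairs produces the pseudo-Frobenius numbers stated in~(1), and~(2) follows since $12k^2+38k+30>12k^2+32k+15$ for every $k\in\mathbb{N}$.

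For item~(3) I will apply Proposition~\ref{prop03}(2). Splitting the Apéry sum as
\[ \sum_{w\in\mathrm{Ap}(S,6k+7)} w \;=\; (6k+11)\sum_{(a,b)\in C}a \;+\; (6k+13)\sum_{(a,b)\in C}b, \]
each inner sum is obtained by summing over the full rectangle $\{0,1,2\}\times\{0,\ldots,2k+2\}$ and subtracting the contributions of the two deleted pairs $(2,2k+1)$ and $(2,2k+2)$. Substituting into
\[ \mathrm{g}(S)=\frac{1}{6k+7}\Bigl(\sum_{w\in\mathrm{Ap}(S,6k+7)} w\Bigr)-\frac{(6k+7)-1}{2} \]
reduces the claim to a polynomial division by $6k+7$ that comes out exactly and yields $6k^2+20k+16$.

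The only real obstacle is bookkeeping. The step that requires care is verifying that both candidates identified above are genuinely $\leq_S$-maximal (not just one of them), since this is what gives $\mathrm{t}(S)=2$, in agreement with Proposition~\ref{prop06}; the two excluded pairs in $C$ are exactly what create the second maximal element, in parallel with the role of $(2,2k+1)$ in the first case. Beyond this, the computation is a direct transcription of the one carried out in Corollary~\ref{cor09}, with the index ranges shifted to accommodate the description of $C$ provided by Theorem~\ref{thm12}.
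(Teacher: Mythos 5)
Your proposal is correct and follows exactly the route the paper intends: the paper omits this proof explicitly because it is the same template as Corollary~\ref{cor09}, namely identifying the two $\leq_S$-maximal elements of $\mathrm{Ap}(S,6k+7)$ from Theorem~\ref{thm12} (your candidates $(2,2k)$ and $(1,2k+2)$, whose values minus $6k+7$ give $12k^2+32k+15$ and $12k^2+38k+30$) and then applying Propositions~\ref{prop04} and~\ref{prop03}. The only point worth making explicit is the maximality of both candidates, which you flag; this is settled either by checking directly that the difference $6k+15$ of the two candidate values does not lie in $S$, or by invoking $\mathrm{t}(S)=2$ from Proposition~\ref{prop06}.
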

	
	\begin{remark}\label{case2}
		From Corollary~\ref{cor13}, we have that, if $p=6k+7$ with $k\in\mathbb{N}$ (in particular, if $(p,p+4,p+6)$ is a prime triplet), then $\mathrm{F}(p,p+4,p+6)=\frac{p^2+5p+6}{3}$.
	\end{remark}
	
	Let us see an example.
	
	\begin{example}\label{exmp14}
		If $k=0$, then $S=\langle 7,11,13 \rangle$, $\mathrm{PF}(S) = \{ 15,30 \}$, $\mathrm{F}(S) = 30$, and $\mathrm{g}(S) = 16$ (by Corollary~\ref{cor13}). Moreover, from Corollary~\ref{cor12}, we get that $\mathrm{Ap}(S,7) = \{0,11,13,22,24,26,37\}$.
	\end{example}
	
	\begin{remark}\label{remark-komatsu-2}
		The values of $\mathrm{F}(p,p+4,p+6)$ and $\mathrm{g}(p,p+4,p+6)$, for $p=6k+7$ with $k\in\mathbb{N}$, can be obtained from Theorems~2 and 3 of \cite{komatsu2}, respectively. In fact, it is enough to consider $a=p$, $K=1$, $k=3$, $d=2$, and $r=2$ in those theorems.
	\end{remark}
	
	\begin{remark}\label{remark-komatsu-3}
		Let us recall that the \textit{Sylvester sum} of a numerical semigroup $S=\langle a_1,a_2,\ldots,a_e\rangle$ is the value
		\[ \mathrm{s}(S) = \sum_{x\in\mathbb{N}\setminus S}x. \]
		In \cite{komatsu2} the author computes the Sylvester sum of numerical semigroups generated by arithmetic progressions with initial gaps. To get the result, he again uses the Ap\'ery set $\mathrm{Ap}(A,a_1)=\{w_0=0,w_1,\ldots,w_{a_1-1}\}$ by the formula
		\begin{equation}\label{Sylv-sum}
			\mathrm{s}(a_1,a_2,\ldots,a_e) = \mathrm{s}(S) = \frac{1}{2a_1}\sum_{i=1}^{a_1-1} w_i^2 - \frac{1}{2}\sum_{i=1}^{a_1-1} w_i + \frac{a_1^2-1}{12}.
		\end{equation}
		As a consequence, by Theorem~1 of \cite{komatsu2} we have that, if $p=6k+7$ with $k\in\mathbb{N}$, then $\mathrm{s}(p,p+4,p+6)=\frac{1}{108}(2p^4+24p^3+93p^2+202p+435)$. Of course, we could directly apply \eqref{Sylv-sum} to the Ap\'ery sets of the family $\mathcal{T}_2$ and obtain the same result. Even more, we could consider \eqref{Sylv-sum} for all the families we analyze in this work, but we prefer to leave it to the reader as a computation exercise.
	\end{remark}

	\section{Prime quadruplets}\label{quadruplets}
	
	It is well known that a prime quadruplet is of the form $(p,p+2,p+6,p+8)$. In the following result we improve this expression.
	
	\begin{proposition}\label{prop15}
		We have that, if $(p,p+2,p+6,p+8)$ is a prime quadruplet, then either $p=4k+5$ or $p=4k+7$ for some $k\in\mathbb{N}$.
	\end{proposition}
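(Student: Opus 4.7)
The plan is to follow the same pattern as Proposition~\ref{prop01}: first reduce the statement to the assertion that $p$ is an odd prime at least $5$, and then express such a $p$ in one of the two claimed forms according to its residue modulo~$4$.

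First I would eliminate the small candidates for $p$. The value $p=2$ is impossible, since then $p+2=4$ is composite, and $p=3$ is impossible, since then $p+6=9$ is composite (equivalently, for $p\equiv 0\pmod 3$ with $p>3$, $p$ itself fails to be prime, while for $p=3$ the entry $p+6$ is divisible by $3$). Together with the requirement that $p$ be prime, this forces $p\geq 5$ and $p$ odd.

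Next I would split on the residue of $p$ modulo~$4$. If $p\equiv 1\pmod 4$, then $p-5$ is a non-negative multiple of $4$ (using $p\geq 5$), and taking $k=(p-5)/4\in\mathbb{N}$ gives $p=4k+5$. If instead $p\equiv 3\pmod 4$, then, having already discarded $p=3$, we have $p\geq 7$, so $p-7$ is a non-negative multiple of $4$, and $p=4k+7$ with $k=(p-7)/4\in\mathbb{N}$. The argument is essentially bookkeeping; the only point requiring care is the explicit exclusion of $p=2,3$, without which the resulting $k$ could fail to be non-negative in the two cases. No substantive obstacle is expected, and the proof can be written in the same compact two-case style used for Proposition~\ref{prop01}.
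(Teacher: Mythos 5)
Your proof is correct and follows essentially the same route as the paper: a residue analysis modulo $4$ ruling out the even classes by primality, combined with the explicit exclusion of the small values ($p=1,3$ in the paper via the non-quadruplets $(1,3,7,9)$ and $(3,5,9,11)$; $p=2,3$ in your version) so that the resulting $k$ is non-negative. No gap to report.
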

	
	\begin{proof}
		It is clear that, if $p\in\mathbb{N}\setminus\{1\}$, then there exists $k\in\mathbb{N}$ such that $p=4k+i$, with $i\in\{0,1,2,3\}$. But, since $p, p+2, p+6, p+8$ are prime numbers, then $i\not=0$ and $i\not=2$. In addition, let us note that $(1,3,7,9)$ and $(3,5,9,11)$ are not prime quadruplets.
	\end{proof}
	
	In contrast to the case of prime triplets, the prime quadruplets are giving by a unique form. However, to solve the Frobenius problem we have to consider two different expressions.
	\begin{itemize}
		\item $(4k+5,4k+7,4k+11,4k+13)$, with $k\in\mathbb{N}$.
		\item $(4k+7,4k+9,4k+13,4k+15)$, with $k\in\mathbb{N}$.
	\end{itemize}
	
	Now, since $\gcd(4k+5,4k+7)=\gcd(4k+7,4k+9)=1$ for all $k\in\mathbb{N}$, we have two families of numerical semigroups ($\mathcal{Q}_1$ and $\mathcal{Q}_2$) related to the prime quadruplets.
	\begin{itemize}
		\item $S\in\mathcal{Q}_1$ if $S=\langle 4k+5,4k+7,4k+11,4k+13 \rangle$, with $k\in\mathbb{N}$.
		\item $S\in\mathcal{Q}_2$ if $S=\langle 4k+7,4k+9,4k+13,4k+15 \rangle$, with $k\in\mathbb{N}$.
	\end{itemize}
	Let us observe that $\langle 5,7,11,13 \rangle$ and $\langle 7,9,13,15 \rangle$ are numerical semigroups with embedding dimension equal to four. Moreover, since $4k+13<2(4k+5)$ and $4k+15<2(4k+7)$ for all $k\geq 1$, we can assert that $e(S)=4$ for every $S\in\mathcal{Q}_1\cup\mathcal{Q}_2$.
	
	\begin{remark}\label{rem16a}
		We have $S=\langle 1,3,7,9 \rangle = \langle 1 \rangle$ and $S=\langle 3,5,9,11 \rangle =\langle 3,5,11 \rangle$. Thus, $\mathrm{e}(S)<4$ in both cases. This is another reason for eliminating  $(1,3,7,9)$ and $(3,5,9,11)$ as possibilities in Proposition~\ref{prop15}.  
	\end{remark}
	
	\begin{remark}\label{rem16b}
		It is possible to improve Proposition~\ref{prop15} a little more. In fact, if we assume that the elements of the quadruplet are not divisible by two or three, then we have that either $p=12k+5$ or $p=12k+11$ for some $k\in\mathbb{N}$. Even more, if we consider that the elements are not divisible by two, three or five, then all prime quadruplets (except $(5,7,11,13)$) are of the form $p=30k+11$ for some $k\in\mathbb{N}$. In any case, we obtain essentially the same conclusions for the families $\mathcal{Q}_1$ and $\mathcal{Q}_2$ associated with $p=12k+5$ and $p=12k+11$.
	\end{remark}

	\subsection{First expression (family $\mathcal{Q}_1$)}
	
	Let $S$ be a numerical semigroup of the form $S=\langle 4k+5,4k+7,4k+11,4k+13 \rangle$, where $k\in\mathbb{N}$. Again we omit the proofs of the results because they are similar to those of Subsection~\ref{case-one}.
	
	\begin{remark}
		The numerical semigroup $S=\langle 5,7,11,13 \rangle$ behaves somewhat  differently from the rest of the cases. In fact, $\mathrm{Ap}(S,5) = \{0,7,11,13,14\}$ and
		\begin{enumerate}
			\item $\mathrm{PF}(S) = \{ 6,8,9 \}$;
			\item $\mathrm{F}(S) = 9$;
			\item $\mathrm{g}(S) = 7$;
			\item $\mathrm{t}(S) = 3$.
		\end{enumerate}
		It is clear that, except the value of $\mathrm{g}(S)$, these results are not obtained by Corollary~\ref{cor19}.
	\end{remark}
	
	\begin{lemma}\label{lem17}
		If $k\in\mathbb{N}\setminus\{0\}$, then we have the equalities
		\begin{enumerate}
			\item $3(4k+7)=2(4k+5)+1(4k+11)$;
			\item $3(4k+11)=1(4k+7)+2(4k+13)$;
			\item $(k+2)(4k+13)=(k+3)(4k+5)+1(4k+11)$;
			\item $1(4k+7)+1(4k+11)=1(4k+5)+1(4k+13)$;
			\item $1(4k+7)+(k+1)(4k+13)=(k+4)(4k+5)$;
			\item $2(4k+7)+1(4k+13)=1(4k+5)+2(4k+11)$;
			\item $1(4k+11)+(k+1)(4k+13)=(k+2)(4k+5)+2(4k+7)$;
			\item $2(4k+11)+k(4k+13)=(k+3)(4k+5)+1(4k+7)$.
		\end{enumerate}
	\end{lemma}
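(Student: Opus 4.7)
The plan is to verify each of the eight equalities by direct arithmetic expansion, since every assertion is a polynomial identity in $k$. Items (1), (2), (4), and (6) reduce to linear identities (the quadratic contribution $4k^{2}$ does not appear), while items (3), (5), (7), and (8) carry a $4k^{2}$ term on each side that cancels immediately, leaving again a linear comparison. For each line I would expand the left-hand side, expand the right-hand side, and compare the coefficient of $k$ with the constant term; each check is essentially a one-line computation.

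In fact all eight algebraic equalities hold for every $k\in\mathbb{N}$; the restriction $k\neq 0$ is imposed because of the \emph{interpretation} of the identities as decompositions inside the semigroup $\langle 4k+5, 4k+7, 4k+11, 4k+13 \rangle$. For $k=0$ the minimal system of generators of $\langle 5,7,11,13\rangle$ behaves anomalously, as the preceding remark already signals (the type jumps to $3$ and the Apéry set of $5$ is small); for $k\geq 1$ every scalar in the equalities is a nonnegative integer and each identity genuinely expresses one element of $S$ in two different ways.

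There is no real obstacle here; the lemma is a bookkeeping step, and its role, following the template of Lemma~\ref{lem07} for the family $\mathcal{T}_{1}$, is to furnish enough rewriting rules of the shape
\[
a(4k+7)+b(4k+11)+c(4k+13) \;=\; \lambda(4k+5)+\cdots
\]
to confine $\mathrm{Ap}(S,4k+5)$ inside a bounded combinatorial region in the $(a,b,c)$-lattice. The distinctive new ingredient relative to the embedding dimension $3$ case is the ``swap'' identity (4), which exhibits two distinct triples $(a,b,c)$ representing the same element of $S$; this is precisely what will have to be quotiented out when counting the candidate region so that its cardinality collapses to the value $4k+5$ demanded by Proposition~\ref{prop02} in the Apéry-set theorem to come.
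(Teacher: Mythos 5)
Your proposal is correct and matches the paper's (omitted) proof exactly: the paper states the analogous Lemma~\ref{lem07} follows from ``straightforward computations'' and omits the proof of Lemma~\ref{lem17} as similar, which is precisely the direct expand-and-compare verification you describe. Your side remarks --- that the identities are in fact polynomial identities valid for all $k\in\mathbb{N}$, with the restriction $k\neq 0$ coming only from the anomalous behaviour of $\langle 5,7,11,13\rangle$, and that the lemma's role is to bound the Ap\'ery set via Proposition~\ref{prop02} --- are accurate and consistent with the surrounding text.
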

	
	\begin{theorem}\label{thm18}
		If $k\in\mathbb{N}\setminus\{0\}$ and $S=\langle 4k+5,4k+7,4k+11,4k+13 \rangle$, then
		\[ \mathrm{Ap}(S,4k+5)=\left\{ a(4k+7)+b(4k+11)+c(4k+13) \mid (a,b,c)\in C \right\}, \]
		where	$ C = C_a \cup C_b \cup C_c$, with $C_a = \big( \{1\} \times \{0\} \times \{0,1,\ldots,k\} \big) \cup \{ (2,0,0)\} $, $C_b = (\{0\} \times \{1,2\} \times \{0,1,\ldots,k\}) \setminus \{(0,2,k)\} $, and $C_c = \{0\} \times \{0\} \times \{0,1,\ldots,k+1\} $.
	\end{theorem}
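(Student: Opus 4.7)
My plan is to mirror the proof of Theorem~\ref{thm08}, using Lemma~\ref{lem17} in place of Lemma~\ref{lem07}. I will show that the set
\[ T := \{a(4k+7)+b(4k+11)+c(4k+13) : (a,b,c)\in C\} \]
coincides with $\mathrm{Ap}(S,4k+5)$. A direct enumeration yields $|C_a|=k+2$, $|C_b|=2k+1$, and $|C_c|=k+2$; these three subsets are pairwise disjoint (they are distinguished by the value of $(a,b)$), so $|C|=4k+5$. Since $|\mathrm{Ap}(S,4k+5)|=4k+5$ by Proposition~\ref{prop02}, it suffices to prove the inclusion $\mathrm{Ap}(S,4k+5)\subseteq T$.

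For this inclusion I would take $s\in\mathrm{Ap}(S,4k+5)$ and write $s=a(4k+7)+b(4k+11)+c(4k+13)$ with $a,b,c\in\mathbb{N}$; the coefficient of $4k+5$ may be chosen to be $0$, for otherwise $s-(4k+5)\in S$. Among all such representations I would single out one that minimises the lexicographic pair $(a+b+c,\,b)$, and claim $(a,b,c)\in C$. Assuming this fails, a direct comparison with the definition of $C$ shows that at least one of the following holds: (i) $a\geq 1$ and $b\geq 1$; (ii) $a\geq 3$; (iii) $c\geq k+2$; (iv) $a=2$ and $c\geq 1$; (v) $a=1$ and $c\geq k+1$; (vi) $a=0$, $b=1$, $c\geq k+1$; (vii) $a=0$, $b=2$, $c\geq k$; (viii) $a=0$ and $b\geq 3$ (with $c\leq k+1$, since otherwise (iii) applies).

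In each of the cases (i)--(vii) I would invoke the identities (4), (1), (3), (6), (5), (7), (8) of Lemma~\ref{lem17}, respectively; each one rewrites $s$ as $\lambda(4k+5)+s'$ with $\lambda\geq 1$ and $s'\in S$, contradicting $s\in\mathrm{Ap}(S,4k+5)$. In case (viii) I would apply identity (2) of Lemma~\ref{lem17}, which rewrites the triple $(0,b,c)$ as $(1,b-3,c+2)$ without introducing any $(4k+5)$-term; this preserves $a+b+c$ but strictly decreases the second coordinate $b$, contradicting the lexicographic minimality. Hence $(a,b,c)\in C$ in every case, which proves the desired inclusion and thereby the theorem.

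The delicate point is precisely case (viii): identity (2) is the unique identity in Lemma~\ref{lem17} whose right-hand side carries no $(4k+5)$-term, so by itself it cannot witness that a triple fails to represent an Ap\'ery element. The role of the secondary lexicographic minimisation on $b$ is exactly to manage this case, by exchanging $b$ for $a$ and $c$ in a controlled fashion until the situation falls into one of the earlier cases.
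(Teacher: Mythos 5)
Your proof is correct and follows essentially the same strategy as the paper's (the paper omits the proof of Theorem~\ref{thm18}, referring to the argument for Theorem~\ref{thm08}: the inclusion $\mathrm{Ap}(S,4k+5)\subseteq T$ via the identities of Lemma~\ref{lem17}, followed by the cardinality count from Proposition~\ref{prop02}). Your case analysis is exhaustive, each case invokes the appropriate identity, and the secondary minimisation on $b$ correctly handles identity~(2), which is the one step the paper's ``we easily deduce'' glosses over.
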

	
	Let us observe that
	\begin{itemize}
		\item $(4k+7) + c(4k+13) = (c+1)(4k+13) - 6$, for all $c\in\{0,1,\ldots,k\}$;
		\item $(4k+11) + c(4k+13) = (c+1)(4k+13) - 2$, for all $c\in\{0,1,\ldots,k\}$;
		\item $2(4k+11) + c(4k+13) = (c+2)(4k+15) - 4$, for all $c\in\{0,1,\ldots,k-1\}$.
	\end{itemize}
	
	Therefore, we can give the Apéry set by arranging its elements in increasing order.
	
	\begin{corollary}\label{cor18}
		If $k\in\mathbb{N}\setminus\{0\}$ and $S=\langle 4k+5,4k+7,4k+11,4k+13 \rangle$, then
		\[ \mathrm{Ap}(S,4k+5)=\{0; (4k+13)-6,(4k+13)-2,4k+13; 2(4k+7); \]
		\[ 2(4k+13)-6,2(4k+13)-4,2(4k+13)-2,2(4k+13); \ldots; \]
		\[ (k+1)(4k+13)-6,(k+1)(4k+13)-4,(k+1)(4k+13)-2, (k+1)(4k+13) \}. \]
	\end{corollary}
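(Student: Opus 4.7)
The plan is to start from the parametrization of $\mathrm{Ap}(S,4k+5)$ provided by Theorem~\ref{thm18} and rearrange its $4k+5$ elements in increasing order, using the three identities displayed just above the corollary to put most of them into the unified form $c'(4k+13)+r$ with $r\in\{-6,-4,-2,0\}$.

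First I would isolate the ``small'' triples $(0,0,0)$, $(1,0,0)$, $(0,1,0)$, $(0,0,1)$, $(2,0,0) \in C$, which give the initial five values $0$, $4k+7$, $4k+11$, $4k+13$, $2(4k+7)$. The remaining $4k$ triples partition into $k$ blocks indexed by $c' \in \{2,\ldots,k+1\}$: for each such $c'$, the quadruple
\[ (1,0,c'-1),\quad (0,2,c'-2),\quad (0,1,c'-1),\quad (0,0,c') \]
lies in $C$ (the range conditions are exactly met, with $(0,2,c'-2)$ requiring $c'-2 \leq k-1$, which holds thanks precisely to the exclusion $(0,2,k) \notin C_b$), and, by the three identities in order, yields the four values $c'(4k+13)-6$, $c'(4k+13)-4$, $c'(4k+13)-2$, $c'(4k+13)$.

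Next I would verify the sorting. For $k \geq 1$, a routine comparison shows
\[ 0 < 4k+7 < 4k+11 < 4k+13 < 2(4k+7) = 8k+14 < 8k+20 = 2(4k+13)-6, \]
so the initial quintuple precedes the $c'=2$ block. Between consecutive blocks, $c'(4k+13) < (c'+1)(4k+13)-6$ because $4k+13>6$, and inside each block the four values increase by $2$. Counting $5 + 4k = 4k+5$ matches $|\mathrm{Ap}(S,4k+5)|$ from Proposition~\ref{prop02}, confirming that the list is complete and ordered as claimed.

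The main obstacle, if any, is purely clerical: matching the $4k+5$ triples of $C$ bijectively against the entries of the sorted list and double-checking that the deletion of $(0,2,k)$ from $C_b$ together with the inclusion of $(0,0,k+1)$ in $C_c$ produces a full final block with exactly four elements and no spurious $(k+2)$-block above it. Once the bookkeeping is in place, the identities do all the arithmetic work and nothing beyond linear comparisons in $k$ is required.
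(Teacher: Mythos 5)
Your proof is correct and follows exactly the route the paper intends: Theorem~\ref{thm18} plus the three rewriting identities displayed just before the corollary (note that the paper's third identity contains a typo, $(c+2)(4k+15)-4$ should read $(c+2)(4k+13)-4$, which your computation implicitly corrects). The paper omits the proof as routine, and your block decomposition indexed by $c'\in\{2,\ldots,k+1\}$ together with the count $5+4k=4k+5$ is precisely the bookkeeping it leaves to the reader.
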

	
	\begin{corollary}\label{cor19}
		If $k\in\mathbb{N}\setminus\{0\}$ and $S=\langle 4k+5,4k+7,4k+11,4k+13 \rangle$, then
		\begin{enumerate}
			\item $\mathrm{PF}(S) = \{ 4k+9,4k^2+13k+2,4k^2+13k+4,4k^2+13k+6,4k^2+13k+8 \}$;
			\item $\mathrm{F}(S) = 4k^2+13k+8$;
			\item $\mathrm{g}(S) = 2k^2+8k+7$;
			\item $\mathrm{t}(S) = 5$.
		\end{enumerate}
	\end{corollary}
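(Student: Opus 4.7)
The plan is to apply Propositions \ref{prop03} and \ref{prop04} to the explicit description of $\mathrm{Ap}(S, 4k+5)$ given by Corollary \ref{cor18}. Parts (1), (2), and (4) all reduce to identifying $\mathrm{Maximals}_{\leq_S}(\mathrm{Ap}(S, 4k+5))$, while part (3) requires a separate summation over the Apéry set.

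To identify the $\leq_S$-maximal elements, I use the criterion (which follows directly from Proposition \ref{prop02}): an element $w \in \mathrm{Ap}(S, n)$ is $\leq_S$-maximal if and only if $w + g \notin \mathrm{Ap}(S, n)$ for every minimal generator $g \neq n$, equivalently $w + g - n \in S$. Here $n = 4k+5$ and the three relevant quantities are $w + 2, w + 6, w + 8$ (corresponding to $g = 4k+7, 4k+11, 4k+13$). Going through Corollary \ref{cor18}: for $w \in \{4k+7, 4k+11, 4k+13\}$, the element $w + (4k+13)$ already appears in the Apéry set (it equals $2(4k+13)-6$, $2(4k+13)-2$, and $2(4k+13)$ respectively), so $w$ is not maximal; similarly, for $w = j(4k+13) - r$ with $2 \le j \le k$ and $r \in \{0,2,4,6\}$, the shift by $4k+13$ produces $(j+1)(4k+13) - r$, again in the Apéry set. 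This leaves exactly five candidates: $2(4k+7)$ together with the four elements of the top block $(k+1)(4k+13) - r$ for $r \in \{0,2,4,6\}$. For each, one verifies that $w+2, w+6, w+8 \in S$ by a short residue computation modulo $4k+5$ (the shift lands in a residue class whose Apéry representative is strictly smaller than $w + g$, so $w + g - n \in S$), using the identities of Lemma \ref{lem17} to provide explicit representations when needed.

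By Proposition \ref{prop04}, $\mathrm{PF}(S)$ is obtained by subtracting $n = 4k+5$ from each of the five maximals:
\[ 2(4k+7) - (4k+5) = 4k+9, \quad (k+1)(4k+13) - r - (4k+5) = 4k^2+13k+8-r \text{ for } r \in \{0,2,4,6\}, \]
yielding the list in (1). Part (2) is then $\mathrm{F}(S) = \max \mathrm{PF}(S) = 4k^2+13k+8$, and part (4) is $\mathrm{t}(S) = |\mathrm{PF}(S)| = 5$. For (3) I apply Proposition \ref{prop03}(2): summing the Apéry set by the blocks displayed in Corollary \ref{cor18}, the initial five elements $0, 4k+7, 4k+11, 4k+13, 2(4k+7)$ contribute $20k+45$, and the four-element blocks for $j = 2, \ldots, k+1$ contribute $\sum_{j=2}^{k+1}(4j(4k+13) - 12)$. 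Using $\sum_{j=2}^{k+1} j = k(k+3)/2$, the total simplifies to $8k^3 + 50k^2 + 86k + 45 = (4k+5)(2k^2 + 10k + 9)$. Dividing by $4k+5$ and subtracting $(4k+4)/2 = 2k+2$ gives $\mathrm{g}(S) = 2k^2 + 8k + 7$.

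The main obstacle is the systematic verification of the maximality criterion for the five candidates: non-maximality of the other elements is uniformly witnessed by the shift $g = 4k+13$ combined with the block structure of Corollary \ref{cor18}, but confirming that all three of $w+2, w+6, w+8$ lie in $S$ for each maximal candidate requires case-by-case use of the syzygies in Lemma \ref{lem17}. The remaining computations are routine arithmetic.
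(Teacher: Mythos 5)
Your proposal is correct and follows essentially the same route the paper intends: the paper omits this proof but points to the argument for Corollary~\ref{cor09}, which likewise reads the $\leq_S$-maximal elements off the explicit Ap\'ery set, applies Propositions~\ref{prop04} and \ref{prop03}, and sums over the Ap\'ery set for the genus. Your identification of the five maximals ($2(4k+7)$ and the top block $(k+1)(4k+13)-r$, $r\in\{0,2,4,6\}$) and the resulting values of $\mathrm{PF}(S)$, $\mathrm{F}(S)$, $\mathrm{g}(S)$, and $\mathrm{t}(S)$ all check out.
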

	
	\begin{remark}\label{case11}
		By Corollary~\ref{cor19}, if $p=4k+5$ for some $k\in\mathbb{N}\setminus\{0\}$ (in particular, if $(p,p+2,p+6,p+8)$ is a prime quadruplet with $p=4k+5$ and $k\geq1$), then $\mathrm{F}(p,p+2,p+6,p+8)=\frac{p^2+3p-8}{4}$.
	\end{remark}
	
	We finish with an illustrative example.
	
	\begin{example}\label{exmp20}
		For $k=24$ we have $S=\langle 101,103,107,109 \rangle$ and, by Corollary~\ref{cor19}, $\mathrm{PF}(S) = \{ 105,2618,2620,2622,2624 \}$, $\mathrm{F}(S) = 2624$, and $\mathrm{g}(S) = 1351$. Moreover, by Corollary~\ref{cor18}, $\mathrm{Ap}(S,11) = \{0,103,107,109,206,212,214,216,218,$ $\ldots,2719,2721,2723,2725\}$.
	\end{example}

	\subsection{Second expression (family $\mathcal{Q}_2$)}
	
	Let $S$ be a numerical semigroup of the form $S=\langle 4k+7,4k+9,4k+13,4k+15 \rangle$, where $k\in\mathbb{N}$. % Again we omit the proofs of the results because they are similar to those of Subsection~\ref{case-one}.
	
	\begin{lemma}\label{lem21}
		If $k\in\mathbb{N}$, then we have the equalities
		\begin{enumerate}
			\item $3(4k+9)=2(4k+7)+1(4k+13)$;
			\item $3(4k+13)=1(4k+9)+2(4k+15)$;
			\item $(k+2)(4k+15)=(k+3)(4k+7)+1(4k+9)$;
			\item $1(4k+9)+1(4k+13)=1(4k+7)+1(4k+15)$;
			\item $2(4k+9)+1(4k+15)=1(4k+7)+2(40k+13)$;
			\item $1(4k+13)+(k+1)(4k+15)=(k+4)(4k+7)$.
		\end{enumerate}
	\end{lemma}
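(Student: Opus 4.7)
The plan is entirely elementary: each of the six items is a polynomial identity in the single variable $k$, so the whole lemma will be proved by expanding both sides and checking they coincide. Concretely, for each item I compute the left-hand side as $Ak+B$ or $Ak^2+Bk+C$ and the right-hand side the same way, and observe the two polynomials are equal. For example, in item~(1), $3(4k+9)=12k+27$ while $2(4k+7)+1(4k+13)=(8k+14)+(4k+13)=12k+27$; in item~(3), $(k+2)(4k+15)=4k^2+23k+30$ matches $(k+3)(4k+7)+1(4k+9)=(4k^2+19k+21)+(4k+9)=4k^2+23k+30$; and item~(6) gives $4k^2+23k+28$ on both sides. Items~(2), (4), and (5) are even shorter, being linear in $k$, and (5) should be read with the obvious correction $40k+13\to 4k+13$.

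It is worth organising the verifications so that the role each identity plays in the forthcoming Apéry-set description is transparent, even though this is not strictly needed for the statement of Lemma~\ref{lem21} itself. Items~(1) and~(2) are the ``$3n_i=2n_j+n_\ell$'' reductions that will bound the multiplicities of $4k+9$ and $4k+13$ by $2$; items~(3) and~(6) are the ``$(k+c)n_i=\cdots$''-type reductions that bound the multiplicity of $4k+15$; and items~(4) and~(5) are the mixed ``swap'' relations needed to bring any representation into the canonical region used to describe the Apéry set.

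The main obstacle, honestly, is not mathematical but bookkeeping: one has to be careful with the linear coefficients $(k+c)$ in items~(3) and~(6) so that the quadratic terms cancel correctly. No other difficulty arises, and the proof can therefore be compressed to the sentence that each identity is verified by direct expansion of both sides, mirroring the approach already used (and omitted) for Lemma~\ref{lem11} and Lemma~\ref{lem17} in the analogous earlier subsections.
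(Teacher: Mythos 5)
Your proposal is correct and takes exactly the approach the paper intends: Lemma~\ref{lem21} is proved by direct expansion of both sides of each identity, just as with Lemma~\ref{lem07} (``straightforward computations''), and the paper explicitly omits these verifications. You are also right that the ``$40k+13$'' in item~(5) is a typo for $4k+13$; with that correction both sides equal $12k+33$, and all six expansions check out.
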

	
	\begin{theorem}\label{thm22}
		If $k\in\mathbb{N}$ and $S=\langle 4k+7,4k+9,4k+13,4k+15 \rangle$, then
		\[ \mathrm{Ap}(S,4k+7)=\left\{ a(4k+9)+b(4k+13)+c(4k+15) \mid (a,b,c)\in C \right\}, \]
		where	$ C = C_a \cup C_b \cup C_c$, with $C_a = \big( \{1\} \times \{0\} \times \{0,1,\ldots,k+1\} \big) \cup \{ (2,0,0)\} $, $C_b = \{0\} \times \{1,2\} \times \{0,1,\ldots,k\} $, and $C_c = \{0\} \times \{0\} \times \{0,1,\ldots,k+1\} $.
	\end{theorem}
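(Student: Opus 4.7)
The plan is to mirror the strategy of Theorem~\ref{thm08} (the unwritten proof of Theorem~\ref{thm18} is parallel): first establish the inclusion $\mathrm{Ap}(S,4k+7)\subseteq\{a(4k+9)+b(4k+13)+c(4k+15):(a,b,c)\in C\}$ using the identities of Lemma~\ref{lem21}, and then close the argument by counting $|C|$ against $|\mathrm{Ap}(S,4k+7)|=4k+7$ from Proposition~\ref{prop02}.

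To prove the inclusion, I would take any $s\in\mathrm{Ap}(S,4k+7)$. Because $s-(4k+7)\notin S$, no factorization of $s$ in the generators can contain a positive multiple of $4k+7$; hence $s=a(4k+9)+b(4k+13)+c(4k+15)$ for some $a,b,c\in\mathbb{N}$. I would then show $(a,b,c)\in C$ by ruling out each forbidden configuration via Lemma~\ref{lem21}: identity~(1) forbids $a\geq 3$, (4) forbids $a\geq 1$ together with $b\geq 1$, (5) forbids $a=2$ with $c\geq 1$, (6) forbids $b\geq 1$ with $c\geq k+1$, and (3) forbids $b=0$ with $c\geq k+2$. In every such case the cited identity rewrites $s$ so that a positive multiple of $4k+7$ appears on the right-hand side, contradicting $s\in\mathrm{Ap}(S,4k+7)$.

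A direct count then gives $|C|=(k+3)+2(k+1)+(k+2)=4k+7$; together with $|\mathrm{Ap}(S,4k+7)|=4k+7$ from Proposition~\ref{prop02}, this upgrades the inclusion to equality.

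The main obstacle is the case $b\geq 3$: identity~(2), namely $3(4k+13)=(4k+9)+2(4k+15)$, does not itself introduce a $4k+7$-term, so a single application does not yield the contradiction used in the other cases. The remedy is to iterate (2), thereby shifting mass from $b$ into the coefficients of $4k+9$ and $4k+15$; after finitely many iterations the resulting representation either falls into one of the patterns already handled by (1), (3), (4), (5), or (6) (and is dismissed as before) or lands in $C$ directly. Either outcome closes the argument, and all other reductions are purely mechanical.
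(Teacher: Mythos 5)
Your proposal is correct and follows the same route the paper intends (it refers the reader to the argument of Theorem~\ref{thm08}): use the identities of Lemma~\ref{lem21} to force any $4k+7$-free factorization of an Ap\'ery element into $C$, then compare $|C|=(k+3)+2(k+1)+(k+2)=4k+7$ with $|\mathrm{Ap}(S,4k+7)|=4k+7$ from Proposition~\ref{prop02}. You also correctly flag and repair the one point where the analogy with Theorem~\ref{thm08} is not literal, namely that identity~(2) of Lemma~\ref{lem21} produces no $4k+7$-term and must be iterated to drive $b$ below $3$ before the remaining identities finish the reduction.
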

	
	Let us observe that
	\begin{itemize}
		\item $(4k+9) + c(4k+15) = (c+1)(4k+15) - 6$, for all $c\in\{0,1,\ldots,k+1\}$;
		\item $(4k+13) + c(4k+15) = (c+1)(4k+15) - 2$, for all $c\in\{0,1,\ldots,k\}$;
		\item $2(4k+13) + c(4k+15) = (c+2)(4k+15) - 4$, for all $c\in\{0,1,\ldots,k\}$.
	\end{itemize}
	Consequently, we can arrange the elements of the Ap\'ery set in increasing order.
	
	\begin{corollary}\label{cor22}
		If $k\in\mathbb{N}$ and $S=\langle 4k+7,4k+9,4k+13,4k+15 \rangle$, then
		\[ \mathrm{Ap}(S,4k+7)=\{0; (4k+15)-6,(4k+15)-2,4k+15; 2(4k+9); \]
		\[ 2(4k+15)-6,2(4k+15)-4,2(4k+15)-2,2(4k+15); \ldots; \]
		\[ (k+1)(4k+15)-6,(k+1)(4k+15)-4,(k+1)(4k+15)-2,(k+1)(4k+15); \]
		\[ (k+2)(4k+15)-6,(k+2)(4k+15)-4 \}. \]
	\end{corollary}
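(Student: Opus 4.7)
The plan is to start from the explicit parametrisation of $\mathrm{Ap}(S,4k+7)$ provided by Theorem~\ref{thm22}, partition the index set $C=C_a\cup C_b\cup C_c$, and then translate each cell into the ``$(c+\text{const})(4k+15)-r$'' form using the three identities listed between Theorem~\ref{thm22} and Corollary~\ref{cor22}. The natural bookkeeping is to group the elements according to which multiple of $4k+15$ they lie closest to, since that is precisely the shape in which the corollary presents them.

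Concretely, I would first list, for each piece of $C$, the resulting Apéry elements: from $C_c$ we get the ``clean'' multiples $0,\,4k+15,\,2(4k+15),\ldots,(k+1)(4k+15)$; from the column $(1,0,c)$ of $C_a$ with $c\ge 1$, the identity $(4k+9)+c(4k+15)=(c+1)(4k+15)-6$ produces $2(4k+15)-6,\ldots,(k+2)(4k+15)-6$, while $c=0$ contributes the isolated $4k+9$; the singleton $(2,0,0)\in C_a$ gives $2(4k+9)$; from $C_b$ with $b=1$, the rewriting $(4k+13)+c(4k+15)=(c+1)(4k+15)-2$ yields $2(4k+15)-2,\ldots,(k+1)(4k+15)-2$ (with $c=0$ giving the isolated $4k+13$); and from $C_b$ with $b=2$, the rewriting $2(4k+13)+c(4k+15)=(c+2)(4k+15)-4$ yields $2(4k+15)-4,\ldots,(k+2)(4k+15)-4$.

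Next I would arrange the resulting $4k+7$ values in increasing order. The low-end bookkeeping is the only slightly delicate point: I must check that $4k+9<4k+13<4k+15<2(4k+9)<2(4k+15)-6$, for which the only nontrivial inequalities are $4k+15<2(4k+9)$, equivalent to $3<4k$ (so valid for $k\ge 1$, and one checks the $k=0$ case directly), and $2(4k+9)<2(4k+15)-6$, equivalent to $4k+9<4k+12$. For $j\ge 2$ the four values $j(4k+15)-6,\,j(4k+15)-4,\,j(4k+15)-2,\,j(4k+15)$ form a block in the obvious order, and the blocks for successive $j$ do not overlap because $j(4k+15)<(j+1)(4k+15)-6$ whenever $4k+15>6$, i.e.\ always. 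Finally for $j=k+2$ only the two entries $(k+2)(4k+15)-6$ and $(k+2)(4k+15)-4$ survive (the value $(k+2)(4k+15)-2$ would correspond to $c=k+1$ in $C_b$ with $b=1$, outside the allowed range, and $(k+2)(4k+15)$ to $c=k+2$ in $C_c$, also outside).

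The main (and only) obstacle is really just this ordering/overlap check, since the translations themselves are immediate from the three displayed identities. A minor sanity check I would include is the cardinality: $1+3+1+4k+2=4k+7$, matching $|C|=(k+3)+2(k+1)+(k+2)=4k+7$ and Proposition~\ref{prop02}. With this, the listing coincides with the one claimed in Corollary~\ref{cor22}.
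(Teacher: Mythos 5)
Your proof is correct and follows exactly the route the paper intends (and carries out explicitly for the analogous Corollary~\ref{cor08}): substitute the three displayed rewritings into the parametrisation of Theorem~\ref{thm22}, check cardinality against Proposition~\ref{prop02}, and sort. One trivial slip: $4k+15<2(4k+9)$ reduces to $-3<4k$, not $3<4k$, so it holds for every $k\in\mathbb{N}$ and your separate $k=0$ check is not actually needed.
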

	
	\begin{remark}
		If $S=\langle 7,9,13,15 \rangle$, then $\mathrm{Ap}(S,7)=\{0,9,13,15,18,24,26\}$. Therefore, we can apply Corollary~\ref{cor22} if we only consider the five first values and the two last.
	\end{remark}
	
	\begin{corollary}\label{cor23}
		If $k\in\mathbb{N}$ and $S=\langle 4k+7,4k+9,4k+13,4k+15 \rangle$, then
		\begin{enumerate}
			\item $\mathrm{PF}(S) = \{ 4k+11, 4k^2+19k+17, 4k^2+19k+19 \}$;
			\item $\mathrm{F}(S) = 4k^2+19k+19$;
			\item $\mathrm{g}(S) = 2k^2+10k+12$;
			\item $\mathrm{t}(S) = 3$.
		\end{enumerate}
	\end{corollary}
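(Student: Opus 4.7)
The plan is to mirror the proof strategy of Corollary~\ref{cor09}, using Theorem~\ref{thm22} as the structural input and Propositions~\ref{prop03} and \ref{prop04} as the conversion tools. So the first step is to identify $\mathrm{Maximals}_{\leq_S}(\mathrm{Ap}(S,4k+7))$ from the explicit parametrization given in Theorem~\ref{thm22}. Looking at the generating triples $C = C_a \cup C_b \cup C_c$, I expect three maximals: the element $2(4k+9)$ coming from $(2,0,0) \in C_a$ (which cannot be enlarged to $(a,b,c)$ with $a\geq 1$ still in $C_a$, and cannot slide over to $C_b$ since $C_b$ forbids $a \geq 1$), the element $(4k+9) + (k+1)(4k+15)$ coming from $(1,0,k+1) \in C_a$, and the element $2(4k+13) + k(4k+15)$ coming from $(0,2,k) \in C_b$. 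I would verify that these are indeed maximal by checking, for each, that adding any generator of $S$ lands outside the list in Corollary~\ref{cor22}, and symmetrically that no other element of the Apéry set lies $\leq_S$-above any of them using the alternative descriptions $(c+1)(4k+15)-6$, $(c+1)(4k+15)-2$, $(c+2)(4k+15)-4$.

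Applying Proposition~\ref{prop04} then yields
\[
\mathrm{PF}(S) = \{2(4k+9) - (4k+7),\ (4k+9)+(k+1)(4k+15) - (4k+7),\ 2(4k+13)+k(4k+15) - (4k+7)\},
\]
and a direct expansion gives the three values $4k+11$, $4k^2+19k+17$, and $4k^2+19k+19$. Parts~(2) and (4) follow immediately: $\mathrm{F}(S) = \max \mathrm{PF}(S) = 4k^2+19k+19$, and $\mathrm{t}(S) = \#\mathrm{PF}(S) = 3$ (after checking these three values are distinct, which fails only if $k$ takes a very small value — the base case $k=0$ can be inspected separately against the remark describing $\mathrm{Ap}(\langle 7,9,13,15\rangle,7)$).

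For part~(3), I would apply the second identity in Proposition~\ref{prop03}, summing the elements of $\mathrm{Ap}(S,4k+7)$ using the tidy arrangement of Corollary~\ref{cor22}. The sum decomposes into the ``small'' block $\{0,4k+9,4k+13,4k+15,2(4k+9)\}$, the $k$ blocks $\{(c+1)(4k+15)-6,(c+1)(4k+15)-4,(c+1)(4k+15)-2,(c+1)(4k+15)\}$ for $c=1,\ldots,k$, and the tail $\{(k+2)(4k+15)-6,(k+2)(4k+15)-4\}$. Each block is a finite arithmetic expression in $k$, and an arithmetic-progression sum plus a constant correction gives a total that divides cleanly by $4k+7$; subtracting $(4k+7-1)/2 = (4k+6)/2$ should produce $2k^2+10k+12$.

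The main obstacle, as always with this type of argument, is the bookkeeping in identifying the $\leq_S$-maximals: one must rule out any cross-family domination, for instance confirming that no element of $C_a$ beats $(0,2,k) \in C_b$ under $\leq_S$, which amounts to showing that differences like $(0,2,k)-(1,0,c)$ and $(0,2,k)-(2,0,0)$ never express a positive element of $S$. These checks reduce, via Lemma~\ref{lem21}, to inspecting the sign of small integer combinations of the four generators, and are the only place where the argument risks being more than routine.
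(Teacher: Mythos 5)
Your proposal is correct and follows exactly the route the paper intends (it omits the proof as ``similar to those of Section~3.1''): read off the three $\leq_S$-maximal elements $2(4k+9)$, $(4k+9)+(k+1)(4k+15)$ and $2(4k+13)+k(4k+15)$ of $\mathrm{Ap}(S,4k+7)$ from Theorem~\ref{thm22}, apply Proposition~\ref{prop04} to get $\mathrm{PF}(S)$, hence $\mathrm{F}(S)$ and $\mathrm{t}(S)$, and sum the Ap\'ery set as arranged in Corollary~\ref{cor22} for $\mathrm{g}(S)$. All the stated values check out (including the $k=0$ case against the remark on $\langle 7,9,13,15\rangle$), so no further comment is needed.
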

	
	\begin{remark}\label{case12}
		By Corollary~\ref{cor23}, if $p=4k+7$ for some $k\in\mathbb{N}$ (in particular, if $(p,p+2,p+6,p+8)$ is a prime quadruplet with $p=4k+7$ and $k\geq0$), then $\mathrm{F}(p,p+2,p+6,p+8)=\frac{p^2+5p-8}{4}$.
	\end{remark}
	
	We finish with an illustrative example.
	
	\begin{example}\label{exmp24}
		For $k=1$ we have $S=\langle 11,13,17,19 \rangle$, $\mathrm{PF}(S) = \{ 15,40,42 \}$, $\mathrm{F}(S) = 42$, and $\mathrm{g}(S) = 24$ (by Corollary~\ref{cor23}). Moreover, from Corollary~\ref{cor22}, we know that $\mathrm{Ap}(S,11) = \{0,13,17,19,26,32,34,36,38,51,53\}$.
	\end{example}

	\section{Prime $k$-tuplets}\label{tuplets}
	
	From the contents of Sections~\ref{triplets} and \ref{quadruplets}, it looks like that the problem is going to became more and more longueur as soon as we consider larger and larger $k$-tuplets. Anyway, it is not difficult to see what happens when $k\in\{5,6,7,8\}$ and, in this way, to propose a conjecture (see Section~\ref{supplement}).
	
	There exist two families of prime quintuplets. Namely, $(p,p+2,p+6,p+8,p+12)$ and $(p,p+4,p+6,p+10,p+12)$. Now, if $k\in\mathbb{N}$, then these families are associated with the values $6k+5$ and $6k+7$, respectively. In addition, the Frobenius number is equal to
	\begin{itemize}
		\item $\frac{p^2+7p-12}{6}$ for $(p,p+2,p+6,p+8,p+12)$ with $p=6k+5\geq11$,
		\item $\frac{p^2+11p+12}{6}$ for $(p,p+4,p+6,p+10,p+12)$ with $p=6k+7\geq7$.
	\end{itemize}
	Furthermore, if we consider numerical semigroups generated by prime quintuplets, then the type is given by
	\begin{itemize}
		\item $\mathrm{t}(S)=6$ for $S=\langle p,p+2,p+6,p+8,p+12 \rangle$ with $p=6k+11\geq11$,
		\item $\mathrm{t}(S)=4$ for $S=\langle p,p+4,p+6,p+10,p+12 \rangle$ with $p=6k+7\geq13$.
	\end{itemize}
	
	Although the expression $(p,p+4,p+6,p+10,p+12,p+16)$ is the unique possibility for the prime sextuplets, we have to consider four different families to study the Frobenius problem. Such families correspond to the values $p=8k+r$ with $k\in\mathbb{N}$ and $r\in\{7,9,11,13\}$. In this case, we have that the Frobenius number is equal to
	\begin{itemize}
		\item $\frac{p^2+9p+16}{8}$ for $p=8k+7$,
		\item $\frac{p^2+15p+16}{8}$ for $p=8k+9$,
		\item $\frac{p^2+13p+16}{8}$ for $p=8k+11$,
		\item $\frac{p^2+11p+16}{8}$ for $p=8k+13$.
	\end{itemize}
	Moreover, for the four families of numerical semigroups associated with the prime sextuplets, the type is equal to $9$ (of course, for $p=8k+7$ with $k\geq1$), $5$, $5$, and $7$, respectively. It is interesting to observe that, although the value of the type is the same in the second and third cases, the structures of their sets of pseudo-Frobenius numbers are essentially different (see Subsection~\ref{sextuplets}).
	
	For prime septuplets we have two families: $(p,p+2,p+6,p+8,p+12,p+18,p+20)$ and $(p,p+2,p+8,p+12,p+14,p+18,p+20)$. Now, if $k\in\mathbb{N}$, then $p=10k+11$ for the first case and $p=10k+19$ for the second one. Thus, the pair (Frobenius number, type) is equal to $(\frac{p^2+9p-20}{10},13)$ (if $p\geq31$) and $(\frac{p^2+11p-20}{10},11)$ (if $p\geq19$), respectively.
	
	Finally, there are three families of prime octuplets: $(p,p+2,p+6,p+12,p+14,p+20,p+24,p+26)$, $(p,p+2,p+6,p+8,p+12,p+18,p+20,p+26)$, and $(p,p+6,p+8,p+14,p+18,p+20,p+24,p+26)$. As discussed in Subsection~\ref{octuplets}, we have to study $39$ cases ($13$ for each family) to obtain formulas for the Frobenius number. % We therefore refer the reader interested in such results to Subsecton~\ref{octuplets}.
	
	From all these comments, we establish the following conjecture.
	
	\begin{conjecture}
		Let us consider a prime $k$-tuplet with first element $p$ and last element $p+q$. Then we have that:
		\begin{enumerate}
			\item The Frobenius number is given by a quadratic polynomial $a_2p^2+a_1p+a_0$.
			\item The leading coefficient is $a_2=\frac{2}{q}$.
			\item The constant term $a_0$ is an integer.
		\end{enumerate}
	\end{conjecture}
	
	Let us observe that we have got $a_0=\pm2$ when $1\leq k\leq7$. However, the values $a_0=-4$, $a_0=-6$, and $a_0=10$ appear in prime octuplets. Thus, without going into further details, we have preferred to state that $a_0$ is an integer.

	\section{Supplement}\label{supplement}
	
	In this section we give the statements of the results for triplets are not necessarily prime triplets (note that we have studied all possibilities for quadruplets in Section~\ref{quadruplets}). Moreover, we also present analogues results for (prime) quintuplets and (prime) sextuplets and we show computational evidence for (prime) septuplets and (prime) octuplets.
	
	\subsection{Other triplets}
	
	\subsubsection{Other triplets (1): $S=\langle 6k+3,6k+5,6k+9 \rangle$}
	
	Note that $\gcd(6k+3,6k+9)=3$. Therefore, $S=\langle 6k+3,6k+5,6k+9 \rangle$ is related to $T=\langle 2k+1,6k+5,2k+3 \rangle = \langle 2k+1,2k+3 \rangle$.
	
	\begin{lemma}\label{lem11-1}
		If $k\in\mathbb{N}$, then we have the equalities
		\begin{enumerate}
			\item $3(6k+5)=2(6k+3)+1(6k+9)$;
			\item $(2k+1)(6k+9)=(2k+3)(6k+3)+0(6k+5)$.
		\end{enumerate}
	\end{lemma}
	
	\begin{theorem}\label{thm12-1}
		If $k\in\mathbb{N}$ and $S=\langle 6k+3,6k+5,6k+9 \rangle$, then
		\[ \mathrm{Ap}(S,6k+3)=\left\{ a(6k+5)+b(6k+9) \mid (a,b)\in C \right\}, \]
		where $ C = \{0,1,2\} \times \{0,1,\ldots,2k\} $.
	\end{theorem}
	
	Let us observe that
	\begin{itemize}
		\item $(6k+5)+b(6k+9)=(b+1)(6k+9)-4$, for all $b\in\{0,1,\ldots,2k\}$;
		\item $2(6k+5)+b(6k+9)=(b+2)(6k+9)-8$, for all $b\in\{0,1,\ldots,2k\}$.
	\end{itemize}
	
	\begin{corollary}\label{cor12-1}
		If $k\in\mathbb{N}$ and $S=\langle 6k+3,6k+5,6k+9 \rangle$, then
		\[ \mathrm{Ap}(S,6k+3)=\{0; (6k+9)-4,6k+9; 2(6k+9)-8,2(6k+9)-4,2(6k+9); \]
		\[ \ldots; 2k(6k+9)-8,2k(6k+9)-4,2k(6k+9); (2k+1)(6k+9)-8,(2k+1)(6k+9)-4; \]
		\[ (2k+2)(6k+9)-8 \}. \]
	\end{corollary}
	
	\begin{corollary}\label{cor13-1}
		If $k\in\mathbb{N}$ and $S=\langle 6k+3,6k+5,6k+9 \rangle$, then
		\begin{enumerate}
			\item $\mathrm{PF}(S) = \{ 12k^2+24k+7 \}$;
			\item $\mathrm{F}(S) = 12k^2+24k+7$;
			\item $\mathrm{g}(S) = 6k^2+12k+4$.
		\end{enumerate}
	\end{corollary}
	
	\begin{remark}\label{casetri-1-1}
		If $p=6k+3$ with $k\in\mathbb{N}$, then $\mathrm{F}(p,p+2,p+6)=\frac{p^2+6p-6}{3}$.
	\end{remark}

	\subsubsection{Other triplets (2): $S=\langle 6k+3,6k+7,6k+9 \rangle$}
	
	Note that $\gcd(6k+3,6k+9)=3$. Therefore, $S=\langle 6k+3,6k+7,6k+9 \rangle$ is related to $T=\langle 2k+1,6k+7,2k+3 \rangle = \langle 2k+1,2k+3 \rangle$.
	
	\begin{lemma}\label{lem11-2}
		If $k\in\mathbb{N}$, then we have the equalities
		\begin{enumerate}
			\item $3(6k+7)=1(6k+3)+2(6k+9)$;
			\item $(2k+1)(6k+9)=(2k+3)(6k+3)+0(6k+7)$.
		\end{enumerate}
	\end{lemma}
	
	\begin{theorem}\label{thm12-2}
		If $k\in\mathbb{N}$ and $S=\langle 6k+3,6k+7,6k+9 \rangle$, then
		\[ \mathrm{Ap}(S,6k+3)=\left\{ a(6k+7)+b(6k+9) \mid (a,b)\in C \right\}, \]
		where $ C = \{0,1,2\} \times \{0,1,\ldots,2k\} $.
	\end{theorem}
	
	Let us observe that
	\begin{itemize}
		\item $(6k+7)+b(6k+9)=(b+1)(6k+9)-2$, for all $b\in\{0,1,\ldots,2k\}$;
		\item $2(6k+7)+b(6k+9)=(b+2)(6k+9)-4$, for all $b\in\{0,1,\ldots,2k\}$.
	\end{itemize}
	
	\begin{corollary}\label{cor12-2}
		If $k\in\mathbb{N}$ and $S=\langle 6k+3,6k+7,6k+9 \rangle$, then
		\[ \mathrm{Ap}(S,6k+3)=\{0; (6k+9)-2,6k+9; 2(6k+9)-4,2(6k+9)-2,2(6k+9); \ldots; \]
		\[ 2k(6k+9)-4,2k(6k+9)-2,2k(6k+9); (2k+1)(6k+9)-4,(2k+1)(6k+9)-2; \]
		\[ (2k+2)(6k+9)-4 \}. \]
	\end{corollary}
	
	\begin{corollary}\label{cor13-2}
		If $k\in\mathbb{N}$ and $S=\langle 6k+3,6k+7,6k+9 \rangle$, then
		\begin{enumerate}
			\item $\mathrm{PF}(S) = \{ 12k^2+24k+11 \}$;
			\item $\mathrm{F}(S) = 12k^2+24k+11$;
			\item $\mathrm{g}(S) = 6k^2+12k+6$.
		\end{enumerate}
	\end{corollary}
	
	\begin{remark}\label{casetri2-1}
		If $p=6k+3$ with $k\in\mathbb{N}$, then $\mathrm{F}(p,p+4,p+6)=\frac{p^2+6p+6}{3}$.
	\end{remark}

	\subsubsection{Other triplets (3): $S=\langle 6k+7,6k+9,6k+13 \rangle$}
	
	Since $\gcd({6k+7,6k+9})=1$, we can assert that $S=\langle 6k+7,6k+9,6k+13 \rangle$ is a numerical semigroup.
	
	\begin{lemma}\label{lem11-3}
		If $k\in\mathbb{N}$, then we have the equalities
		\begin{enumerate}
			\item $3(6k+9)=2(6k+7)+1(6k+13)$;
			\item $(2k+3)(6k+13)=(2k+3)(6k+7)+2(6k+9)$;
			\item $1(6k+9)+(2k+2)(6k+13)=(2k+5)(6k+7)$.
		\end{enumerate}
	\end{lemma}
	
	\begin{theorem}\label{thm12-3}
		If $k\in\mathbb{N}$ and $S=\langle 6k+7,6k+9,6k+13 \rangle$, then
		\[ \mathrm{Ap}(S,6k+7)=\left\{ a(6k+9)+b(6k+13) \mid (a,b)\in C \right\}, \]
		where $ C = \big( \{0,1,2\} \times \{0,1,\ldots,2k+2\} \big) \setminus \{ (1,2k+2),(2,2k+2)\} $.
	\end{theorem}
	
	Let us observe that
	\begin{itemize}
		\item $(6k+9)+b(6k+13)=(b+1)(6k+13)-4$, for all $b\in\{0,1,\ldots,2k+1\}$;
		\item $2(6k+9)+b(6k+13)=(b+2)(6k+13)-8$, for all $b\in\{0,1,\ldots,2k+1\}$.
	\end{itemize}
	
	\begin{corollary}\label{cor12-3}
		If $k\in\mathbb{N}$ and $S=\langle 6k+7,6k+9,6k+13 \rangle$, then
		\[ \mathrm{Ap}(S,6k+7)=\{0; (6k+13)-4,6k+13; 2(6k+13)-8,2(6k+13)-4,2(6k+13); \]
		\[ \ldots; (2k+2)(6k+13)-8,(2k+2)(6k+13)-4,(2k+2)(6k+13); \]
		\[ (2k+3)(6k+13)-8 \}. \]
	\end{corollary}
	
	\begin{corollary}\label{cor13-3}
		If $k\in\mathbb{N}$ and $S=\langle 6k+7,6k+9,6k+13 \rangle$, then
		\begin{enumerate}
			\item $\mathrm{PF}(S) = \{ 12k^2+32k+19,12k^2+38k+24 \}$;
			\item $\mathrm{F}(S) = 12k^2+38k+24$;
			\item $\mathrm{g}(S) = 6k^2+20k+14$.
		\end{enumerate}
	\end{corollary}
	
	\begin{remark}\label{casetri1-2}
		If $p=6k+7$ with $k\in\mathbb{N}$, then $\mathrm{F}(p,p+2,p+6)=\frac{p^2+5p-12}{3}$.
	\end{remark}

	\subsubsection{Other triplets (4): $S=\langle 6k+5,6k+9,6k+11 \rangle$}
	
	Since $\gcd({6k+9,6k+11})=1$, we can assert that $S=\langle 6k+5,6k+9,6k+11 \rangle$ is a numerical semigroup.
	
	\begin{lemma}\label{lem11-4}
		If $k\in\mathbb{N}$, then we have the equalities
		\begin{enumerate}
			\item $3(6k+9)=1(6k+5)+2(6k+11)$;
			\item $(2k+3)(6k+11)=(2k+3)(6k+5)+2(6k+9)$;
			\item $1(6k+9)+(2k+1)(6k+11)=(2k+4)(6k+5)$.
		\end{enumerate}
	\end{lemma}
	
	\begin{theorem}\label{thm12-4}
		If $k\in\mathbb{N}$ and $S=\langle 6k+5,6k+9,6k+11 \rangle$, then
		\[ \mathrm{Ap}(S,6k+5)=\left\{ a(6k+9)+b(6k+11) \mid (a,b)\in C \right\}, \]
		where $ C = \big( \{0,1,2\} \times \{0,1,\ldots,2k\} \big) \cup \{ (0,2k+1),(0,2k+2)\} $.
	\end{theorem}
	
	Let us observe that
	\begin{itemize}
		\item $(6k+9)+b(6k+11)=(b+1)(6k+11)-2$, for all $b\in\{0,1,\ldots,2k+1\}$;
		\item $2(6k+9)+b(6k+11)=(b+2)(6k+11)-4$, for all $b\in\{0,1,\ldots,2k+1\}$.
	\end{itemize}
	
	\begin{corollary}\label{cor12-4}
		If $k\in\mathbb{N}$ and $S=\langle 6k+5,6k+9,6k+11 \rangle$, then
		\[ \mathrm{Ap}(S,6k+7)=\{0; (6k+11)-2,6k+11; 2(6k+11)-4,2(6k+11)-2,2(6k+11); \]
		\[ \ldots; (2k+1)(6k+11)-4,(2k+1)(6k+11)-2,(2k+1)(6k+11); \]
		\[ (2k+2)(6k+11)-4,(2k+2)(6k+11) \}. \]
	\end{corollary}
	
	\begin{corollary}\label{cor13-4}
		If $k\in\mathbb{N}$ and $S=\langle 6k+5,6k+9,6k+11 \rangle$, then
		\begin{enumerate}
			\item $\mathrm{PF}(S) = \{ 12k^2+28k+13,12k^2+28k+17 \}$;
			\item $\mathrm{F}(S) = 12k^2+28k+17$;
			\item $\mathrm{g}(S) = 6k^2+16k+10$.
		\end{enumerate}
	\end{corollary}
	
	\begin{remark}\label{casetri-2-2}
		If $p=6k+5$ with $k\in\mathbb{N}$, then $\mathrm{F}(p,p+4,p+6)=\frac{p^2+4p+6}{3}$.
	\end{remark}

	\subsection{Quintuplets}\label{quintuplets}
	
	Let us recall that a prime quintuplet is of the form $(p,p+2,p+6,p+8,p+12)$ or of the form $(p,p+4,p+6,p+10,p+12)$. This fact is improved in the following proposition.
	
	\begin{proposition}\label{prop31}
		We have that:
		\begin{enumerate}
			\item If $(p,p+2,p+6,p+8,p+12)$ is a prime quintuplet, then $p=6k+5$, with $k\in\mathbb{N}$.
			\item If $(p,p+4,p+6,p+10,p+12)$ is a prime quintuplet, then $p=6k+7$, with $k\in\mathbb{N}$.
		\end{enumerate}
	\end{proposition}
	
	As a consequence of the previous proposition, we have that a prime quintuplet is of one of the following two forms.
	\begin{enumerate}
		\item $(6k+5,6k+7,6k+11,6k+13,6k+17)$, with $k\in\mathbb{N}$.
		\item $(6k+7,6k+11,6k+13,6k+17,6k+19)$, with $k\in\mathbb{N}$.
	\end{enumerate}
	
	Since $\gcd({6k+5,6k+7})=\gcd({6k+11,6k+13})=1$, we can define two families of numerical semigroups ($\mathcal{I}_1$ and $\mathcal{I}_2$) associated with the prime quintuplets.
	\begin{itemize}
		\item $S\in\mathcal{I}_1$ if $S=\langle 6k+5,6k+7,6k+11,6k+13,6k+17 \rangle$, with $k\in\mathbb{N}\setminus\{0\}$.
		\item $S\in\mathcal{I}_2$ if $S=\langle 6k+7,6k+11,6k+13,6k+17,6k+19 \rangle$, with $k\in\mathbb{N}$.
	\end{itemize}
	Let us observe that $\langle 7,11,13,17,19 \rangle$ is a numerical semigroup with embedding dimension equal to five. Moreover, since $6k+17<2(6k+5)$ and $6k+19<2(6k+7)$ for all $k\geq 1$, we can assert that $e(S)=5$ for every $S\in\mathcal{I}_1\cup\mathcal{I}_2$.
	
	\begin{remark}\label{rem32}
		We have $S=\langle 1,5,7,11,13 \rangle = \langle 1 \rangle$ and $S=\langle 5,7,11,13,17 \rangle =\langle 5,7,11,13 \rangle$. Thus, $\mathrm{e}(S)<4$ in both cases. This is a reason for eliminating  $(1,5,7,11,13)$ and $(5,7,11,13,17)$ as possibilities in the families $\mathcal{I}_1$ and $\mathcal{I}_2$.  
	\end{remark}
	
	\begin{remark}\label{rem33}
		It is possible to improve Proposition~\ref{prop31} a little more. In fact, we have that $p=30k+11$, for some $k\in\mathbb{N}$, when $(p,p+2,p+6,p+8,p+12)$ and $p=30k+7$, for some $k\in\mathbb{N}$, when $(p,p+4,p+6,p+10,p+12)$. In any case, we obtain essentially the same conclusions for the families $\mathcal{I}_1$ and $\mathcal{I}_2$ associated with these expressions.
	\end{remark}

	\subsubsection{First case (family $\mathcal{I}_1$)}\label{case-quione}
	
	Numerical semigroups of the form $S=\langle 6k+5,6k+7,6k+11,6k+13,6k+17 \rangle$, where $k\in\mathbb{N}\setminus\{0\}$.
	
	\begin{lemma}\label{lem34}
		If $k\in\mathbb{N}$, then we have the equalities
		\begin{enumerate}
			\item $3(6k+7)=2(6k+5)+1(6k+11)$;
			\item $2(6k+11)=1(6k+5)+1(6k+17)$;
			\item $3(6k+13)=1(6k+5)+2(6k+17)$;
			\item $(k+1)(6k+17)=(k+2)(6k+5)+1(6k+7)$;
			\item $1(6k+7)+1(6k+11)=1(6k+5)+1(6k+13)$;
			\item $1(6k+11)+1(6k+13)=1(6k+7)+1(6k+17)$;
			\item $2(6k+7)+1(6k+13)=1(6k+5)+2(6k+11)=2(6k+5)+1(6k+17)$;
			\item $1(6k+7)+2(6k+13)=1(6k+5)+1(6k+11)+1(6k+17)$;
			\item $2(6k+7)+1(6k+17)=1(6k+5)+2(6k+13)$;
			\item $2(6k+13)+k(6k+17)=(k+3)(6k+5)+1(6k+11)$;
			\item $1(6k+7)+1(6k+13)+k(6k+17)=(k+4)(6k+5)$.
		\end{enumerate}
	\end{lemma}
	
	Now, let us take $ B = \{0,1\} \times \{0,1\} \times \{0,1,2\} \times \{0,1,\ldots,k-1\} $. Moreover, let us consider the usual product order on $\mathbb{N}^n$.
	
	\begin{theorem}\label{thm35}
		If $k\in\mathbb{N}\setminus\{0\}$ and $S=\langle 6k+5,6k+7,6k+11,6k+13,6k+17 \rangle$, then $\mathrm{Ap}(S,6k+5)$ is given by
		\[ \left\{ a(6k+7)+b(6k+11)+c(6k+13)+d(6k+17) \mid (a,b,c,d)\in C \right\}, \]
		where $ C = \{ (a,b,c,d) \in B \mid ab=0, \; bc=0, \; ac\not=2 \} \cup \{(2,0,0,0)\} \cup \{ (0,0,0,k), (1,0,0,k), (0,1,0,k), (0,0,1,k) \} $.
	\end{theorem}
	
	We can describe the Apéry set arranging its elements in increasing order.
	
	\begin{corollary}\label{cor36a}
		If $k\in\mathbb{N}\setminus\{0\}$ and $S=\langle 6k+5,6k+7,6k+11,6k+13,6k+17 \rangle$, then
		\[ \mathrm{Ap}(S,6k+5)=\{0; (6k+17)-10,(6k+17)-6, (6k+17)-4, 6k+17; 2(6k+7); \]
		\[ 2(6k+17)-14,2(6k+17)-10,2(6k+17)-8,2(6k+17)-6,2(6k+17)-4,2(6k+17); \]
		\[ \ldots; k(6k+17)-14,k(6k+17)-10,k(6k+17)-8, \]
		\[ k(6k+17)-6,k(6k+17)-4,k(6k+17); \]
		\[ (k+1)(6k+17)-14,(k+1)(6k+17)-10,(k+1)(6k+17)-8, \]
		\[ (k+1)(6k+17)-6,(k+1)(6k+17)-4 \}. \]
	\end{corollary}
	
	We identify a certain pattern: ``;'' separates certain groups of numbers.
	
	\begin{corollary}\label{cor36b}
		If $k\in\mathbb{N}\setminus\{0\}$ and $S=\langle 6k+5,6k+7,6k+11,6k+13,6k+17 \rangle$, then
		\begin{enumerate}
			\item $\mathrm{PF}(S) = \{ 6k+9,6k^2+17k-2,6k^2+17k+2,6k^2+17k+4,6k^2+17k+6,6k^2+17k+8 \}$;
			\item $\mathrm{F}(S) = 6k^2+17k+8$;
			\item $\mathrm{g}(S) = 3k^2+11k+7$.
		\end{enumerate}
	\end{corollary}
	
	\begin{remark}\label{casequi-1}
		By Corollary~\ref{cor36b}, if $p=6k+5$ for some $k\in\mathbb{N}\setminus\{0\}$ (in particular, if $(p,p+2,p+6,p+8,p+12)$ is a prime quintuplet with $p\geq11$), then $\mathrm{F}(p,p+2,p+6,p+8,p+12)=\frac{p^2+7p-12}{6}$.
	\end{remark}

	\subsubsection{Second case (family $\mathcal{I}_2$)}\label{case-quitwo}
	
	Numerical semigroups of the form $S=\langle 6k+7,6k+11,6k+13,6k+17,6k+19 \rangle$, where $k\in\mathbb{N}$.
	
	\begin{lemma}\label{lem37}
		If $k\in\mathbb{N}$, then we have the equalities
		\begin{enumerate}
			\item $3(6k+11)=1(6k+7)+2(6k+13)$;
			\item $2(6k+13)=1(6k+7)+1(6k+19)$;
			\item $3(6k+17)=1(6k+13)+2(6k+19)$;
			\item $(k+2)(6k+19)=(k+3)(6k+7)+1(6k+17)$;
			\item $1(6k+11)+1(6k+13)=1(6k+7)+1(6k+17)$;
			\item $1(6k+13)+1(6k+17)=1(6k+11)+1(6k+19)$;
			\item $2(6k+11)+1(6k+17)=1(6k+7)+1(6k+13)+1(6k+19)$;
			\item $1(6k+11)+2(6k+17)=1(6k+7)+2(6k+19)=2(6k+13)+1(6k+19)$;
			\item $2(6k+11)+1(6k+19)=1(6k+7)+2(6k+17)$;
			\item $1(6k+13)+(k+1)(6k+19)=(k+3)(6k+7)+1(6k+11)$;
			\item $1(6k+17)+(k+1)(6k+19)=(k+2)(6k+7)+2(6k+11)$;
			\item $2(6k+17)+k(6k+19)=(k+3)(6k+7)+1(6k+13)$.
		\end{enumerate}
	\end{lemma}
	
	Now, let us take $ B = \{0,1\} \times \{0,1\} \times \{0,1,2\} \times \{0,1,\ldots,k-1\} $. Moreover, let us consider the usual product order on $\mathbb{N}^n$.
	
	\begin{theorem}\label{thm38}
		If $k\in\mathbb{N}$ and $S=\langle 6k+7,6k+11,6k+13,6k+17,6k+19 \rangle$, then $\mathrm{Ap}(S,6k+7)$ is given by
		\[ \left\{ a(6k+11)+b(6k+13)+c(6k+17)+d(6k+19) \mid (a,b,c,d)\in C \right\}, \]
		where $ C = \{ (a,b,c,d) \in B \mid ab=0, \; bc=0, \; ac\not=2 \} \cup \{(2,0,0,0)\} \cup \{ (0,0,0,k), (1,0,0,k), (0,1,0,k), (0,0,1,k) \} \cup \{ (0,0,0,k+1), (1,0,0,k+1) \} $.
	\end{theorem}
	
	We can describe the Apéry set arranging its elements in increasing order.
	
	\begin{corollary}\label{cor39a}
		If $k\in\mathbb{N}$ and $S=\langle 6k+7,6k+11,6k+13,6k+17,6k+19 \rangle$, then
		\[ \mathrm{Ap}(S,6k+7)=\{0; (6k+19)-8,(6k+19)-6, (6k+19)-2, 6k+19; 2(6k+11); \]
		\[ 2(6k+19)-10,2(6k+19)-8,2(6k+19)-6,2(6k+19)-4,2(6k+19)-2,2(6k+19); \]
		\[ \ldots; (k+1)(6k+19)-10,(k+1)(6k+19)-8,(k+1)(6k+19)-6, \]
		\[ (k+1)(6k+19)-4,(k+1)(6k+19)-2,(k+1)(6k+19); (k+2)(6k+19)-8 \}. \]
	\end{corollary}
	
	We identify a certain pattern: ``;'' separates certain groups of numbers.
	
	\begin{corollary}\label{cor39b}
		If $k\in\mathbb{N}$ and $S=\langle 6k+7,6k+11,6k+13,6k+17,6k+19 \rangle$, then
		\begin{enumerate}
			\item $\mathrm{PF}(S) = \{ 15,23 \}$ if $k=0$; $\mathrm{PF}(S) = \{ 6k+15,6k^2+19k+2,6k^2+19k+8,6k^2+25k+23 \}$ if $k\geq 1$;
			\item $\mathrm{F}(S) = 6k^2+25k+23$;
			\item $\mathrm{g}(S) = 3k^2+19k+13$.
		\end{enumerate}
	\end{corollary}
	
	\begin{remark}\label{casequi-2}
		By Corollary~\ref{cor39b}, if $p=6k+7$ for some $k\in\mathbb{N}\setminus\{0\}$ (in particular, if $(p,p+4,p+6,p+10,p+12)$ is a prime quintuplet), then $\mathrm{F}(p,p+4,p+6,p+10,p+12)=\frac{p^2+11p+12}{6}$.
	\end{remark}

	\subsubsection{Other quintuplets (1): $S=\langle 6k+7,6k+9,6k+13,6k+15,6k+19 \rangle$}
	
	Since $\gcd({6k+7,6k+9})=1$, we can assert that $S=\langle 6k+7,6k+9,6k+13,6k+15,6k+19 \rangle$ is a numerical semigroup.
	
	\begin{lemma}\label{lem34-1}
		If $k\in\mathbb{N}$, then we have the equalities
		\begin{enumerate}
			\item $3(6k+9)=2(6k+7)+1(6k+13)$;
			\item $2(6k+13)=1(6k+7)+1(6k+19)$;
			\item $3(6k+15)=1(6k+7)+2(6k+19)$;
			\item $(k+2)(6k+19)=(k+2)(6k+7)+1(6k+9)+1(6k+15)$;
			\item $1(6k+9)+1(6k+13)=1(6k+7)+1(6k+15)$;
			\item $1(6k+13)+1(6k+15)=1(6k+9)+1(6k+19)$;
			\item $2(6k+9)+1(6k+15)=1(6k+7)+2(6k+13)=2(6k+7)+1(6k+19)$;
			\item $1(6k+9)+2(6k+15)=1(6k+7)+1(6k+13)+1(6k+19)$;
			\item $2(6k+9)+1(6k+19)=1(6k+7)+2(6k+15)$;
			\item $1(6k+9)+(k+1)(6k+19)=(k+4)(6k+7)$;
			\item $1(6k+13)+(k+1)(6k+19)=(k+2)(6k+7)+2(6k+9)$;
			\item $1(6k+15)+(k+1)(6k+19)=(k+1)(6k+7)+3(6k+9)$;
			\item $2(6k+15)+k(6k+19)=(k+3)(6k+7)+1(6k+9)$.
		\end{enumerate}
	\end{lemma}
	
	Now, let us take $ B = \{0,1\} \times \{0,1\} \times \{0,1,2\} \times \{0,1,\ldots,k\} $. Moreover, let us consider the usual product order on $\mathbb{N}^n$.
	
	\begin{theorem}\label{thm35-1}
		If $k\in\mathbb{N}$ and $S=\langle 6k+7,6k+9,6k+13,6k+15,6k+19 \rangle$, then $\mathrm{Ap}(S,6k+7)$ is given by
		\[ \left\{ a(6k+9)+b(6k+13)+c(6k+15)+d(6k+19) \mid (a,b,c,d)\in C \right\}, \]
		where $ C = \{ (a,b,c,d) \in B \mid ab=0, \; bc=0, \; ac\not=2, (c,d)\not=(2,k) \} \cup \{(2,0,0,0)\} \cup \{(0,0,0,k+1) \}$.
	\end{theorem}
	
	We can describe the Apéry set arranging its elements in increasing order.
	
	\begin{corollary}\label{cor36a-1}
		If $k\in\mathbb{N}$ and $S=\langle 6k+7,6k+9,6k+13,6k+15,6k+19 \rangle$, then
		\[ \mathrm{Ap}(S,6k+7)=\{0; (6k+19)-10,(6k+19)-6, (6k+19)-4, 6k+19; 2(6k+9); \]
		\[ 2(6k+19)-14,2(6k+19)-10,2(6k+19)-8,2(6k+19)-6,2(6k+19)-4,2(6k+19); \]
		\[ \ldots; (k+1)(6k+19)-14,(k+1)(6k+19)-10,(k+1)(6k+19)-8, \]
		\[ (k+1)(6k+19)-6,(k+1)(6k+19)-4,(k+1)(6k+19);(k+2)(6k+19)-14 \}. \]
	\end{corollary}
	
	We identify a certain pattern: ``;'' separates certain groups of numbers.
	
	\begin{corollary}\label{cor36b-1}
		If $k\in\mathbb{N}$ and $S=\langle 6k+7,6k+9,6k+13,6k+15,6k+19 \rangle$, then
		\begin{enumerate}
			\item $\mathrm{PF}(S) = \{6,11,12,17\}$ if $k=0$; $\mathrm{PF}(S) = \{ 6k+11,6k^2+19k+6,6k^2+19k+12,6k^2+25k+17 \}$ if $k\geq 1$;
			\item $\mathrm{F}(S) = 6k^2+25k+17$;
			\item $\mathrm{g}(S) = 3k^2+13k+11$.
		\end{enumerate}
	\end{corollary}
	
	\begin{remark}\label{casequi-1-1}
		From Corollary~\ref{cor36b-1}, if $p=6k+7$ with $k\in\mathbb{N}$, then $\mathrm{F}(p,p+2,p+6,p+8,p+12)=\frac{p^2+11p-24}{6}$.
	\end{remark}

	\subsubsection{Other quintuplets (2): $S=\langle 6k+9,6k+11,6k+15,6k+17,6k+21 \rangle$}
	
	Since $\gcd({6k+9,6k+11})=1$, we can assert that $S=\langle 6k+9,6k+11,6k+15,6k+17,6k+21 \rangle$ is a numerical semigroup.
	
	\begin{lemma}\label{lem34-2}
		If $k\in\mathbb{N}$, then we have the equalities
		\begin{enumerate}
			\item $3(6k+11)=2(6k+9)+1(6k+15)$;
			\item $2(6k+15)=1(6k+9)+1(6k+21)$;
			\item $3(6k+17)=1(6k+9)+2(6k+21)$;
			\item $(k+2)(6k+21)=(k+1)(6k+9)+3(6k+11)$;
			\item $1(6k+11)+1(6k+15)=1(6k+9)+1(6k+17)$;
			\item $1(6k+15)+1(6k+17)=1(6k+11)+1(6k+21)$;
			\item $2(6k+11)+1(6k+17)=1(6k+9)+2(6k+15)=2(6k+9)+1(6k+21)$;
			\item $1(6k+11)+2(6k+17)=1(6k+9)+1(6k+15)+1(6k+21)$;
			\item $2(6k+11)+1(6k+21)=1(6k+9)+2(6k+17)$;
			\item $1(6k+15)+(k+1)(6k+21)=(k+4)(6k+9)$;
			\item $2(6k+17)+(k+1)(6k+21)=(k+3)(6k+9)+1(6k+11)$.
		\end{enumerate}
	\end{lemma}
	
	Now, let us take $ B = \{0,1\} \times \{0,1\} \times \{0,1,2\} \times \{0,1,\ldots,k\} $. Moreover, let us consider the usual product order on $\mathbb{N}^n$.
	
	\begin{theorem}\label{thm35-2}
		If $k\in\mathbb{N}$ and $S=\langle 6k+9,6k+11,6k+15,6k+17,6k+21 \rangle$, then $\mathrm{Ap}(S,6k+9)$ is given by
		\[ \left\{ a(6k+11)+b(6k+15)+c(6k+17)+d(6k+21) \mid (a,b,c,d)\in C \right\}, \]
		where $ C = \{ (a,b,c,d) \in B \mid ab=0, \; bc=0, \; ac\not=2 \} \cup \{(2,0,0,0)\} \cup \{(0,0,0,k+1), (1,0,0,k+1)\} $.
	\end{theorem}
	
	We can describe the Apéry set arranging its elements in increasing order.
	
	\begin{corollary}\label{cor36a-2}
		If $k\in\mathbb{N}$ and $S=\langle 6k+9,6k+11,6k+15,6k+17,6k+21 \rangle$, then
		\[ \mathrm{Ap}(S,6k+9)=\{0; (6k+21)-10,(6k+21)-6, (6k+21)-4, 6k+21; 2(6k+11); \]
		\[ 2(6k+21)-14,2(6k+21)-10,2(6k+21)-8,2(6k+21)-6,2(6k+21)-4,2(6k+21); \]
		\[ \ldots; (k+1)(6k+21)-14,(k+1)(6k+21)-10,(k+1)(6k+21)-8, \]
		\[ (k+1)(6k+21)-6,(k+1)(6k+21)-4,(k+1)(6k+21); \]
		\[ (k+2)(6k+21)-14,(k+2)(6k+21)-10,(k+2)(6k+21)-8 \}. \]
	\end{corollary}
	
	We identify a certain pattern: ``;'' separates certain groups of numbers.
	
	\begin{corollary}\label{cor36b-2}
		If $k\in\mathbb{N}$ and $S=\langle 6k+9,6k+11,6k+15,6k+17,6k+21 \rangle$, then
		\begin{enumerate}
			\item $\mathrm{PF}(S) = \{ 6k+13,6k^2+27k+19,6k^2+27k+23,6k^2+27k+25 \}$;
			\item $\mathrm{F}(S) = 6k^2+27k+25$;
			\item $\mathrm{g}(S) = 3k^2+15k+16$.
		\end{enumerate}
	\end{corollary}
	
	\begin{remark}\label{casequi-1-2}
		From Corollary~\ref{cor36b-2}, if $p=6k+9$ with $k\in\mathbb{N}$, then $\mathrm{F}(p,p+2,p+6,p+8,p+12)=\frac{p^2+9p-12}{6}$.
	\end{remark}

	\subsubsection{Other quintuplets (3): $S=\langle 6k+5,6k+9,6k+11,6k+15,6k+17 \rangle$}
	
	Since $\gcd({6k+9,6k+11})=1$, we can assert that $S=\langle 6k+5,6k+9,6k+11,6k+15,6k+17 \rangle$ is a numerical semigroup.
	
	\begin{lemma}\label{lem37-1}
		If $k\in\mathbb{N}$, then we have the equalities
		\begin{enumerate}
			\item $3(6k+9)=1(6k+5)+2(6k+11)$;
			\item $2(6k+11)=1(6k+5)+1(6k+17)$;
			\item $3(6k+15)=1(6k+11)+2(6k+17)$;
			\item $(k+2)(6k+17)=(k+1)(6k+5)+2(6k+9)+1(6k+11)$;
			\item $1(6k+9)+1(6k+11)=1(6k+5)+1(6k+15)$;
			\item $1(6k+11)+1(6k+15)=1(6k+9)+1(6k+17)$;
			\item $2(6k+9)+1(6k+15)=1(6k+5)+1(6k+11)+1(6k+17)$;
			\item $1(6k+9)+2(6k+15)=1(6k+5)+2(6k+17)=2(6k+11)+1(6k+17)$;
			\item $2(6k+9)+1(6k+17)=1(6k+5)+2(6k+15)$;
			\item $1(6k+9)+(k+1)(6k+17)=(k+3)(6k+5)+1(6k+11)$;
			\item $1(6k+11)+(k+1)(6k+17)=(k+2)(6k+5)+2(6k+9)$;
			\item $1(6k+15)+k(6k+17)=(k+3)(6k+5)$.
		\end{enumerate}
	\end{lemma}
	
	Now, let us take $ B = \{0,1\} \times \{0,1\} \times \{0,1,2\} \times \{0,1,\ldots,k-1\} $. Moreover, let us consider the usual product order on $\mathbb{N}^n$.
	
	\begin{theorem}\label{thm38-1}
		If $k\in\mathbb{N}\setminus\{0\}$ and $S=\langle 6k+5,6k+9,6k+11,6k+15,6k+17 \rangle$, then $\mathrm{Ap}(S,6k+5)$ is given by
		\[ \left\{ a(6k+9)+b(6k+11)+c(6k+15)+d(6k+17) \mid (a,b,c,d)\in C \right\}, \]
		where $ C = \{ (a,b,c,d) \in B \mid ab=0, \; bc=0, \; ac\not=2 \} \cup \{(2,0,0,0)\} \cup \{(0,0,0,k), (1,0,0,k), (0,1,0,k)\} \cup \{(0,0,0,k+1)\} $.
	\end{theorem}
	
	We can describe the Apéry set arranging its elements in increasing order.
	
	\begin{corollary}\label{cor39a-1}
		If $k\in\mathbb{N}\setminus\{0\}$ and $S=\langle 6k+5,6k+9,6k+11,6k+15,6k+17 \rangle$, then
		\[ \mathrm{Ap}(S,6k+5)=\{0; (6k+17)-8,(6k+17)-6, (6k+17)-2, 6k+17; 2(6k+9); \]
		\[ 2(6k+17)-10,2(6k+17)-8,2(6k+17)-6,2(6k+17)-4,2(6k+17)-2,2(6k+17); \]
		\[ \ldots; k(6k+17)-10,k(6k+17)-8,k(6k+17)-6,k(6k+17)-4, \]
		\[ k(6k+17)-2,k(6k+17); (k+1)(6k+17)-10,(k+1)(6k+17)-8, \]
		\[ (k+1)(6k+17)-6,(k+1)(6k+17)-4,(k+1)(6k+17) \}. \]
	\end{corollary}
	
	We identify a certain pattern: ``;'' separates certain groups of numbers.
	
	\begin{corollary}\label{cor39b-1}
		If $k\in\mathbb{N}\setminus\{0\}$ and $S=\langle 6k+5,6k+9,6k+11,6k+15,6k+17 \rangle$, then
		\begin{enumerate}
			\item $\mathrm{PF}(S) = \{ 6k+13,6k^2+17k+2,6k^2+17k+4,6k^2+17k+6,6k^2+17k+8,6k^2+17k+12 \}$ if $k\geq 1$;
			\item $\mathrm{F}(S) = 6k^2+17k+12$;
			\item $\mathrm{g}(S) = 3k^2+11k+9$.
		\end{enumerate}
	\end{corollary}
	
	\begin{remark}\label{casequi-2-1}
		From Corollary~\ref{cor39b-1}, if $p=6k+5$ with $k\in\mathbb{N}\setminus\{0\}$, then $\mathrm{F}(p,p+4,p+6,p+10,p+12)=\frac{p^2+7p+12}{6}$.
	\end{remark}

	\subsubsection{Other quintuplets (4): $S=\langle 6k+9,6k+13,6k+15,6k+19,6k+21 \rangle$}
	
	Since $\gcd({6k+13,6k+15})=1$, we can assert that $S=\langle 6k+9,6k+13,6k+15,6k+19,6k+21 \rangle$ is a numerical semigroup.
	
	\begin{lemma}\label{lem37-2}
		If $k\in\mathbb{N}$, then we have the equalities
		\begin{enumerate}
			\item $3(6k+13)=1(6k+9)+2(6k+15)$;
			\item $2(6k+15)=1(6k+9)+1(6k+21)$;
			\item $3(6k+19)=1(6k+15)+2(6k+21)$;
			\item $(k+2)(6k+21)=(k+3)(6k+9)+1(6k+15)$;
			\item $1(6k+13)+1(6k+15)=1(6k+9)+1(6k+19)$;
			\item $1(6k+15)+1(6k+19)=1(6k+13)+1(6k+21)$;
			\item $2(6k+13)+1(6k+19)=1(6k+9)+1(6k+15)+1(6k+21)$;
			\item $1(6k+13)+2(6k+19)=1(6k+9)+2(6k+21)=2(6k+15)+1(6k+21)$;
			\item $2(6k+13)+1(6k+21)=1(6k+9)+2(6k+19)$;
			\item $1(6k+15)+(k+1)(6k+21)=(k+4)(6k+9)$;
			\item $1(6k+19)+(k+1)(6k+21)=(k+3)(6k+9)+1(6k+13)$.
		\end{enumerate}
	\end{lemma}
	
	Now, let us take $ B = \{0,1\} \times \{0,1\} \times \{0,1,2\} \times \{0,1,\ldots,k\} $. Moreover, let us consider the usual product order on $\mathbb{N}^n$.
	
	\begin{theorem}\label{thm38-2}
		If $k\in\mathbb{N}$ and $S=\langle 6k+9,6k+13,6k+15,6k+19,6k+21 \rangle$, then $\mathrm{Ap}(S,6k+9)$ is given by
		\[ \left\{ a(6k+13)+b(6k+15)+c(6k+19)+d(6k+21) \mid (a,b,c,d)\in C \right\}, \]
		where $ C = \{ (a,b,c,d) \in B \mid ab=0, \; bc=0, \; ac\not=2 \} \cup \{(2,0,0,0)\} \cup \{(0,0,0,k+1), (1,0,0,k+1)\} $.
	\end{theorem}
	
	We can describe the Apéry set arranging its elements in increasing order.
	
	\begin{corollary}\label{cor39a-2}
		If $k\in\mathbb{N}$ and $S=\langle 6k+9,6k+13,6k+15,6k+19,6k+21 \rangle$, then
		\[ \mathrm{Ap}(S,6k+9)=\{0; (6k+21)-8,(6k+21)-6,(6k+21)-2,6k+21; 2(6k+13); \]
		\[ 2(6k+21)-10,2(6k+21)-8,2(6k+21)-6,2(6k+21)-4,2(6k+21)-2,2(6k+21); \]
		\[ \ldots; (k+1)(6k+21)-10,(k+1)(6k+21)-8,(k+1)(6k+21)-6, \]
		\[ (k+1)(6k+21)-4,(k+1)(6k+21)-2,(k+1)(6k+21); \]
		\[ (k+2)(6k+21)-10,(k+2)(6k+21)-8,(k+2)(6k+21)-4 \}. \]
	\end{corollary}
	
	We identify a certain pattern: ``;'' separates certain groups of numbers.
	
	\begin{corollary}\label{cor39b-2}
		If $k\in\mathbb{N}$ and $S=\langle 6k+9,6k+13,6k+15,6k+19,6k+21 \rangle$, then
		\begin{enumerate}
			\item $\mathrm{PF}(S) = \{ 6k+17,6k^2+27k+23,6k^2+27k+25,6k^2+27k+29 \}$;
			\item $\mathrm{F}(S) = 6k^2+27k+29$;
			\item $\mathrm{g}(S) = 3k^2+15k+18$.
		\end{enumerate}
	\end{corollary}
	
	\begin{remark}\label{casequi-2-2}
		From Corollary~\ref{cor39b-2}, if $p=6k+9$ with $k\in\mathbb{N}$, then $\mathrm{F}(p,p+4,p+6,p+10,p+12)=\frac{p^2+9p+12}{6}$.
	\end{remark}

	\subsection{Sextuplets}\label{sextuplets}
	
	Let us recall that a prime triplet is of the form $(p,p+4,p+6,p+10,p+12,p+16)$. This fact is improved in the following proposition.
	
	\begin{proposition}\label{prop41}
		Except $\{7, 11, 13, 17, 19, 23\}$, all prime sextuplets are of the form $\{210n + 97, 210n + 101, 210n + 103, 210n + 107, 210n + 109, 210n + 113\}$ with $n\in\mathbb{N}$.
	\end{proposition}
	
	However, to study the Frobenius problem, we are going to consider the following four cases.
	\begin{enumerate}
		\item $p=8k+7$, with $k\in\mathbb{N}$;
		\item $p=8k+9$, with $k\in\mathbb{N}$;
		\item $p=8k+11$, with $k\in\mathbb{N}$;
		\item $p=8k+13$, with $k\in\mathbb{N}$.
	\end{enumerate}
	
	Since $\gcd({13,15})=\gcd({15,17})=\gcd({17,19})=1$, we can define four families of numerical semigroups ($\mathcal{S}_1$, $\mathcal{S}_2$, $\mathcal{S}_3$, and $\mathcal{S}_4$) associated with the prime sextuplets.
	\begin{itemize}
		\item $S\in\mathcal{S}_1$ if $S=\langle 8k+7,8k+11,8k+13,8k+17,8k+19,8k+23 \rangle$, with $k\in\mathbb{N}$.
		\item $S\in\mathcal{S}_2$ if $S=\langle 8k+9,8k+13,8k+15,8k+19,8k+21,8k+25 \rangle$, with $k\in\mathbb{N}$.
		\item $S\in\mathcal{S}_3$ if $S=\langle 8k+11,8k+15,8k+17,8k+21,8k+23,8k+27 \rangle$, with $k\in\mathbb{N}$.
		\item $S\in\mathcal{S}_4$ if $S=\langle 8k+13,8k+17,8k+19,8k+23,8k+25,8k+29 \rangle$, with $k\in\mathbb{N}$.
	\end{itemize}
	Let us note that $\langle 7,11,13,17,19,23 \rangle$, $\langle 9,13,15,19,21,25 \rangle$, $\langle 11,15,17,21,23,27 \rangle$, $\langle 13,17,19,23,25,29 \rangle$, and $\langle 15,19,21,25,27,31 \rangle$ are numerical semigroups with embedding dimension equal to six. Moreover, since $8k+23<2(8k+7)$ for all $k\geq 2$ and $8k+25<2(8k+9)$, $8k+27<2(8k+11)$, and $8k+29<2(8k+13)$ for all $k\geq 1$, we can assert that $e(S)=6$ for every $S\in\mathcal{S}_1\cup\mathcal{S}_2\cup\mathcal{S}_3\cup\mathcal{S}_4$.

	\subsubsection{First case (family $\mathcal{S}_1$)}\label{case-sexone}
	
	Numerical semigroups of the form $S=\langle 8k+7,8k+11,8k+13,8k+17,8k+19,8k+23 \rangle$, where $k\in\mathbb{N}$.
	
	\begin{lemma}\label{lem44}
		If $k\in\mathbb{N}$, then we have the equalities
		\begin{enumerate}
			\item $3(8k+11)=1(8k+7)+2(8k+13)$;
			\item $2(8k+13)=1(8k+7)+1(8k+19)$;
			\item $2(8k+17)=1(8k+11)+1(8k+23)$;
			\item $3(8k+19)=1(8k+11)+2(8k+23)$;
			\item $(k+2)(8k+23)=(k+2)(8k+7)+1(8k+13)+1(8k+19)$;
			\item $1(8k+11)+1(8k+13)=1(8k+7)+1(8k+17)$;
			\item $1(8k+11)+1(8k+19)=1(8k+7)+1(8k+23)$;
			\item $1(8k+13)+1(8k+17)=1(8k+7)+1(8k+23)$;
			\item $1(8k+17)+1(8k+19)=1(8k+13)+1(8k+23)$;
			\item $2(8k+11)+1(8k+17)=1(8k+7)+1(8k+13)+1(8k+19)$;
			\item $2(8k+11)+1(8k+23)=1(8k+7)+2(8k+19)$;
			\item $1(8k+13)+2(8k+19)=1(8k+11)+1(8k+17)+1(8k+23)$;
			\item $1(8k+11)+1(8k+17)+k(8k+23)=(k+4)(8k+7)$;
			\item $1(8k+13)+1(8k+19)+k(8k+23)=(k+3)(8k+7)+1(8k+11)$;
			\item $2(8k+19)+k(8k+23)=(k+3)(8k+7)+1(8k+17)$;
			\item $1(8k+11)+(k+1)(8k+23)=(k+3)(8k+7)+1(8k+13)$;
			\item $1(8k+13)+(k+1)(8k+23)=(k+2)(8k+7)+2(8k+11)$;
			\item $1(8k+17)+(k+1)(8k+23)=(k+1)(8k+7)+3(8k+11)$;
			\item $1(8k+19)+(k+1)(8k+23)=(k+1)(8k+7)+2(8k+11)+1(8k+13)$.
		\end{enumerate}
	\end{lemma}
	
	Now, let us take $ B = \{0,1\} \times \{0,1\} \times \{0,1\} \times \{0,1,2\} \times \{0,1,\ldots,k-1\} $. Moreover, let us consider the usual product order on $\mathbb{N}^n$.
	
	\begin{theorem}\label{thm45}
		If $k\in\mathbb{N}$ and $S=\langle 8k+7,8k+11,8k+13,8k+17,8k+19,8k+23 \rangle$, then $\mathrm{Ap}(S,8k+7)$ is given by
		\[ \left\{ a(8k+11)+b(8k+13)+c(8k+17)+d(8k+19)+e(8k+23) \mid (a,b,c,d,e)\in C \right\}, \]
		where $ C = \{ (a,b,c,d,e) \in B \mid ab=0, \; ad=0, \; bc=0, \; cd=0, \; bd\not=2 \} \cup \{(2,0,0,0,0)\} \cup \{(0,0,0,0,k),(1,0,0,0,k),(0,1,0,0,k),(0,0,1,0,k),(0,0,0,1,k)\}$ $\cup \{(0,0,0,0,k+1)\} $.
	\end{theorem}
	
	We can describe the Apéry set arranging its elements in increasing order.
	
	\begin{corollary}\label{cor46a}
		If $k\in\mathbb{N}\setminus\{0\}$ and $S=\langle 8k+7,8k+11,8k+13,8k+17,8k+19,8k+23 \rangle$, then
		\[ \mathrm{Ap}(S,8k+7)=\{0; (8k+23)-12,(8k+23)-10,(8k+23)-6,(8k+23)-4,8k+23; \]
		\[ 2(8k+11); 2(8k+23)-18,2(8k+23)-14,2(8k+23)-12,2(8k+23)-10, \]
		\[ 2(8k+23)-8,2(8k+23)-6,2(8k+23)-4,2(8k+23); \ldots; \]
		\[ (k+1)(8k+23)-18,(k+1)(8k+23)-14,(k+1)(8k+23)-12,(k+1)(8k+23)-10, \]
		\[ (k+1)(8k+23)-8,(k+1)(8k+23)-6,(k+1)(8k+23)-4,(k+1)(8k+23) \}. \]
		Moreover, if $S=\langle 7,11,13,17,19,23 \rangle$, then $\mathrm{Ap}(S,7)=\{0;11,13,17,19,23;22\}$.
	\end{corollary}
	
	We identify a certain pattern: ``;'' separates certain groups of numbers.
	
	\begin{corollary}\label{cor46ab}
		If $S=\langle 7,11,13,17,19,23 \rangle$, then
		\begin{enumerate}
			\item $\mathrm{PF}(S) = \{ 6,10,12,15,16 \}$;
			\item $\mathrm{F}(S) = 16$;
			\item $\mathrm{g}(S) = 12$.
		\end{enumerate}
	\end{corollary}
	
	\begin{corollary}\label{cor46b}
		If $k\in\mathbb{N}\setminus\{0\}$ and $S=\langle 8k+7,8k+11,8k+13,8k+17,8k+19,8k+23 \rangle$, then
		\begin{enumerate}
			\item $\mathrm{PF}(S) = \{ 8k+15,8k^2+23k-2,8k^2+23k+2,8k^2+23k+4,8k^2+23k+6,8k^2+23k+8,8k^2+23k+10,8k^2+23k+12,8k^2+23k+16 \}$;
			\item $\mathrm{F}(S) = 8k^2+23k+16$;
			\item $\mathrm{g}(S) = 4k^2+16k+12$.
		\end{enumerate}
	\end{corollary}
	
	\begin{remark}\label{casesex-1}
		From Corollaries~\ref{cor46ab} and \ref{cor46b}, we deduce that, if $p=8k+7$ with $k\in\mathbb{N}$, then $\mathrm{F}(p,p+4,p+6,p+10,p+12,p+16)=\frac{p^2+9p+16}{8}$.
	\end{remark}

	\subsubsection{Second case (family $\mathcal{S}_2$)}\label{case-sextwo}
	
	Numerical semigroups of the form $S=\langle 8k+9,8k+13,8k+15,8k+19,8k+21,8k+25 \rangle$, where $k\in\mathbb{N}$.
	
	\begin{lemma}\label{lem44-2}
		If $k\in\mathbb{N}$, then we have the equalities
		\begin{enumerate}
			\item $3(8k+13)=1(8k+9)+2(8k+15)$;
			\item $2(8k+15)=1(8k+9)+1(8k+21)$;
			\item $2(8k+19)=1(8k+13)+1(8k+25)$;
			\item $3(8k+21)=1(8k+13)+2(8k+25)$;
			\item $(k+2)(8k+25)=(k+2)(8k+9)+1(8k+13)+1(8k+19)$;
			\item $1(8k+13)+1(8k+15)=1(8k+9)+1(8k+19)$;
			\item $1(8k+13)+1(8k+21)=1(8k+9)+1(8k+25)$;
			\item $1(8k+15)+1(8k+19)=1(8k+9)+1(8k+25)$;
			\item $1(8k+19)+1(8k+21)=1(8k+15)+1(8k+25)$;
			\item $2(8k+13)+1(8k+19)=1(8k+9)+1(8k+15)+1(8k+21)$;
			\item $2(8k+13)+1(8k+25)=1(8k+9)+2(8k+21)$;
			\item $1(8k+15)+2(8k+21)=1(8k+13)+1(8k+19)+1(8k+25)$;
			\item $2(8k+21)+k(8k+25)=(k+3)(8k+9)+1(8k+15)$;
			\item $1(8k+15)+1(8k+21)+k(8k+25)=(k+4)(8k+9)$;
			\item $1(8k+15)+(k+1)(8k+25)=(k+3)(8k+9)+1(8k+13)$;
			\item $1(8k+19)+(k+1)(8k+25)=(k+2)(8k+9)+1(8k+15)+1(8k+21)$;
			\item $1(8k+21)+(k+1)(8k+25)=(k+2)(8k+9)+1(8k+13)+1(8k+15)$.
		\end{enumerate}
	\end{lemma}
	
	Now, let us take $ B = \{0,1\} \times \{0,1\} \times \{0,1\} \times \{0,1,2\} \times \{0,1,\ldots,k-1\} $. Moreover, let us consider the usual product order on $\mathbb{N}^n$.
	
	\begin{theorem}\label{thm45-2}
		If $k\in\mathbb{N}$ and $S=\langle 8k+9,8k+13,8k+15,8k+19,8k+21,8k+25 \rangle$, then $\mathrm{Ap}(S,8k+9)$ is given by
		\[ \left\{ a(8k+13)+b(8k+15)+c(8k+19)+d(8k+21)+e(8k+25) \mid (a,b,c,d,e)\in C \right\}, \]
		where $ C = \{ (a,b,c,d,e) \in B \mid ab=0, \; ad=0, \; bc=0, \; cd=0, \; bd\not=2 \}  \cup \{(2,0,0,0,0)\} \cup \{(0,0,0,0,k),(1,0,0,0,k),(0,1,0,0,k),(0,0,1,0,k),(1,0,1,0,k),$ $(0,0,0,1,k)\} \cup \{(0,0,0,0,k+1),(1,0,0,0,k+1)\}$.
	\end{theorem}
	
	We can describe the Apéry set arranging its elements in increasing order.
	
	\begin{corollary}\label{cor46a-2}
		If $k\in\mathbb{N}\setminus\{0\}$ and $S=\langle 8k+9,8k+13,8k+15,8k+19,8k+21,8k+25 \rangle$, then
		\[ \mathrm{Ap}(S,8k+9)=\{0; (8k+25)-12,(8k+25)-10,(8k+25)-6,(8k+25)-4,8k+25; \]
		\[ 2(8k+13); 2(8k+25)-18,2(8k+25)-14,2(8k+25)-12,2(8k+25)-10, \]
		\[ 2(8k+25)-8,2(8k+25)-6,2(8k+25)-4,2(8k+25); \ldots; \]
		\[ (k+1)(8k+25)-18,(k+1)(8k+25)-14,(k+1)(8k+25)-12,(k+1)(8k+25)-10, \]
		\[ (k+1)(8k+25)-8,(k+1)(8k+25)-6,(k+1)(8k+25)-4,(k+1)(8k+25); \]
		\[ (k+2)(8k+25)-18,(k+2)(8k+25)-12 \}. \]
		Moreover, if $S=\langle 9,13,15,19,21,25 \rangle$, then
		\[ \mathrm{Ap}(S,9)=\{0;13,15,19,21,25;26;32,38\}. \]
	\end{corollary}
	
	We identify a certain pattern: ``;'' separates certain groups of numbers.
	
	%	\begin{corollary}\label{cor46ab-2}
		%		If $S=\langle 79,13,15,19,21,25 \rangle$, then
		%		\begin{enumerate}
			%			\item $\mathrm{PF}(S) = \{ 6, 12, 17, 23, 29 \}$;
			%			\item $\mathrm{F}(S) = 29$;
			%			\item $\mathrm{g}(S) = 17$.
			%		\end{enumerate}
		%	\end{corollary}
	
	\begin{corollary}\label{cor46b-2}
		If $k\in\mathbb{N}$ and $S=\langle 8k+9,8k+13,8k+15,8k+19,8k+21,8k+25 \rangle$, then
		\begin{enumerate}
			\item $\mathrm{PF}(S) = \{ 8k+17,8k^2+25k+6,8k^2+25k+12,8k^2+33k+23,8k^2+33k+29 \}$;
			\item $\mathrm{F}(S) = 8k^2+33k+29$;
			\item $\mathrm{g}(S) = 4k^2+18k+17$.
		\end{enumerate}
	\end{corollary}
	
	\begin{remark}\label{casesex-2}
		From Corollary~\ref{cor46b-2}, we deduce that, if $p=8k+9$ with $k\in\mathbb{N}$, then $\mathrm{F}(p,p+4,p+6,p+10,p+12,p+16)=\frac{p^2+15p+16}{8}$.
	\end{remark}

	\subsubsection{Third case (family $\mathcal{S}_3$)}\label{case-sexthree}
	
	Numerical semigroups of the form $S=\langle 8k+11,8k+15,8k+17,8k+21,8k+23,8k+27 \rangle$, where $k\in\mathbb{N}$.
	
	\begin{lemma}\label{lem44-3}
		If $k\in\mathbb{N}$, then we have the equalities
		\begin{enumerate}
			\item $3(8k+15)=1(8k+11)+2(8k+17)$;
			\item $2(8k+17)=1(8k+11)+1(8k+23)$;
			\item $2(8k+21)=1(8k+15)+1(8k+27)$;
			\item $3(8k+23)=1(8k+15)+2(8k+27)$;
			\item $(k+2)(8k+27)=(k+2)(8k+11)+1(8k+15)+1(8k+17)$;
			\item $1(8k+15)+1(8k+17)=1(8k+11)+1(8k+21)$;
			\item $1(8k+15)+1(8k+23)=1(8k+11)+1(8k+27)$;
			\item $1(8k+17)+1(8k+21)=1(8k+11)+1(8k+27)$;
			\item $1(8k+21)+1(8k+23)=1(8k+17)+1(8k+27)$;
			\item $2(8k+15)+1(8k+21)=1(8k+11)+1(8k+17)+1(8k+23)$;
			\item $2(8k+15)+1(8k+27)=1(8k+11)+2(8k+23)$;
			\item $1(8k+17)+2(8k+23)=1(8k+15)+1(8k+21)+1(8k+27)$;
			\item $1(8k+17)+(k+1)(8k+27)=(k+4)(8k+11)$;
			\item $1(8k+21)+(k+1)(8k+27)=(k+3)(8k+11)+1(8k+15)$;
			\item $1(8k+23)+(k+1)(8k+27)=(k+3)(8k+11)+1(8k+17)$.
		\end{enumerate}
	\end{lemma}
	
	Now, let us take $ B = \{0,1\} \times \{0,1\} \times \{0,1\} \times \{0,1,2\} \times \{0,1,\ldots,k\} $. Moreover, let us consider the usual product order on $\mathbb{N}^n$.
	
	\begin{theorem}\label{thm45-3}
		If $k\in\mathbb{N}$ and $S=\langle 8k+11,8k+15,8k+17,8k+21,8k+23,8k+27 \rangle$, then $\mathrm{Ap}(S,8k+11)$ is given by
		\[ \left\{ a(8k+15)+b(8k+17)+c(8k+21)+d(8k+23)+e(8k+27) \mid (a,b,c,d,e)\in C \right\}, \]
		where $ C = \{ (a,b,c,d,e) \in B \mid ab=0, \; ad=0, \; bc=0, \; cd=0, \; bd\not=2 \}  \cup \{(2,0,0,0,0)\} \cup \{(0,0,0,0,k+1),(1,0,0,0,k+1)\}$.
	\end{theorem}
	
	We can describe the Apéry set arranging its elements in increasing order.
	
	\begin{corollary}\label{cor46a-3}
		If $k\in\mathbb{N}\setminus\{0\}$ and $S=\langle 8k+11,8k+15,8k+17,8k+21,8k+23,8k+27 \rangle$, then
		\[ \mathrm{Ap}(S,8k+11)=\{0; (8k+27)-12,(8k+27)-10,(8k+27)-6,(8k+27)-4,8k+27; \]
		\[ 2(8k+15); 2(8k+27)-18,2(8k+27)-14,2(8k+27)-12,2(8k+27)-10, \]
		\[ 2(8k+27)-8,2(8k+27)-6,2(8k+27)-4,2(8k+27); \ldots; \]
		\[ (k+1)(8k+27)-18,(k+1)(8k+27)-14,(k+1)(8k+27)-12,(k+1)(8k+27)-10, \]
		\[ (k+1)(8k+27)-8,(k+1)(8k+27)-6,(k+1)(8k+27)-4,(k+1)(8k+27); \]
		\[ (k+2)(8k+27)-18,(k+2)(8k+27)-14,(k+2)(8k+27)-12,(k+2)(8k+27)-8 \}. \]
		Moreover, if $S=\langle 11,15,17,21,23,27 \rangle$, then
		\[ \mathrm{Ap}(S,11)=\{0;15,17,21,23,27;30;36,40,42,46\}. \]
	\end{corollary}
	
	We identify a certain pattern: ``;'' separates certain groups of numbers.
	
	\begin{corollary}\label{cor46b-3}
		If $k\in\mathbb{N}$ and $S=\langle 8k+11,8k+15,8k+17,8k+21,8k+23,8k+27 \rangle$, then
		\begin{enumerate}
			\item $\mathrm{PF}(S) = \{ 8k+19,8k^2+35k+25,8k^2+35k+29,8k^2+35k+31,8k^2+35k+35 \}$;
			\item $\mathrm{F}(S) = 8k^2+35k+35$;
			\item $\mathrm{g}(S) = 4k^2+20k+22$.
		\end{enumerate}
	\end{corollary}
	
	\begin{remark}\label{casesex-3}
		From Corollary~\ref{cor46b-3}, we deduce that, if $p=8k+11$ with $k\in\mathbb{N}$, then $\mathrm{F}(p,p+4,p+6,p+10,p+12,p+16)=\frac{p^2+13p+16}{8}$.
	\end{remark}

	\subsubsection{Fourth case (family $\mathcal{S}_4$)}\label{case-sexfour}
	
	Numerical semigroups of the form $S=\langle 8k+13,8k+17,8k+19,8k+23,8k+25,8k+29 \rangle$, where $k\in\mathbb{N}$.
	
	\begin{lemma}\label{lem44-4}
		If $k\in\mathbb{N}$, then we have the equalities
		\begin{enumerate}
			\item $3(8k+17)=1(8k+13)+2(8k+19)$;
			\item $2(8k+19)=1(8k+13)+1(8k+25)$;
			\item $2(8k+23)=1(8k+17)+1(8k+29)$;
			\item $3(8k+25)=1(8k+17)+2(8k+29)$;
			\item $(k+2)(8k+29)=(k+3)(8k+13)+1(8k+19)$;
			\item $1(8k+17)+1(8k+19)=1(8k+13)+1(8k+23)$;
			\item $1(8k+17)+1(8k+25)=1(8k+13)+1(8k+29)$;
			\item $1(8k+19)+1(8k+23)=1(8k+13)+1(8k+29)$;
			\item $1(8k+23)+1(8k+25)=1(8k+19)+1(8k+29)$;
			\item $2(8k+17)+1(8k+23)=1(8k+13)+1(8k+19)+1(8k+25)$;
			\item $2(8k+17)+1(8k+29)=1(8k+13)+2(8k+25)$;
			\item $1(8k+19)+2(8k+25)=1(8k+17)+1(8k+23)+1(8k+29)$;
			\item $1(8k+19)+1(8k+25)+(k+1)(8k+29)=(k+3)(8k+13)+2(8k+17)$
			\item $1(8k+23)+(k+1)(8k+29)=(k+4)(8k+13)$;
			\item $2(8k+25)+(k+1)(8k+29)=(k+3)(8k+13)+1(8k+17)+1(8k+23)$.
		\end{enumerate}
	\end{lemma}
	
	Now, let us take $ B = \{0,1\} \times \{0,1\} \times \{0,1\} \times \{0,1,2\} \times \{0,1,\ldots,k\} $. Moreover, let us consider the usual product order on $\mathbb{N}^n$.
	
	\begin{theorem}\label{thm45-4}
		If $k\in\mathbb{N}$ and $S=\langle 8k+13,8k+17,8k+19,8k+23,8k+25,8k+29 \rangle$, then $\mathrm{Ap}(S,8k+13)$ is given by
		\[ \left\{ a(8k+17)+b(8k+19)+c(8k+23)+d(8k+25)+e(8k+29) \mid (a,b,c,d,e)\in C \right\}, \]
		where $ C = \{ (a,b,c,d,e) \in B \mid ab=0, \; ad=0, \; bc=0, \; cd=0, \; bd\not=2 \}  \cup \{(2,0,0,0,0)\} \cup \{(0,0,0,0,k+1),(1,0,0,0,k+1),(0,1,0,0,k+1),(0,0,0,1,k+1)\}$.
	\end{theorem}
	
	We can describe the Apéry set arranging its elements in increasing order.
	
	\begin{corollary}\label{cor46a-4}
		If $k\in\mathbb{N}\setminus\{0\}$ and $S=\langle 8k+13,8k+17,8k+19,8k+23,8k+25,8k+29 \rangle$, then
		\[ \mathrm{Ap}(S,8k+13)=\{0; (8k+29)-12,(8k+29)-10,(8k+29)-6,(8k+29)-4,8k+29; \]
		\[ 2(8k+17); 2(8k+29)-18,2(8k+29)-14,2(8k+29)-12,2(8k+29)-10, \]
		\[ 2(8k+29)-8,2(8k+29)-6,2(8k+29)-4,2(8k+29); \ldots; \]
		\[ (k+1)(8k+29)-18,(k+1)(8k+29)-14,(k+1)(8k+29)-12,(k+1)(8k+29)-10, \]
		\[ (k+1)(8k+29)-8,(k+1)(8k+29)-6,(k+1)(8k+29)-4,(k+1)(8k+29); \]
		\[ (k+2)(8k+29)-18,(k+2)(8k+29)-14,(k+2)(8k+29)-12, \]
		\[ (k+2)(8k+29)-10,(k+2)(8k+29)-8,(k+2)(8k+29)-4 \}. \]
		Moreover, if $S=\langle 13,17,19,23,25,29 \rangle$, then
		\[ \mathrm{Ap}(S,13)=\{0;17,21,23,25,29;34;40,44,46,48,50,54\}. \]
	\end{corollary}
	
	We identify a certain pattern: ``;'' separates certain groups of numbers.
	
	\begin{corollary}\label{cor46b-4}
		If $k\in\mathbb{N}$ and $S=\langle 8k+13,8k+17,8k+19,8k+23,8k+25,8k+29 \rangle$, then
		\begin{enumerate}
			\item $\mathrm{PF}(S) = \{ 8k+21,8k^2+37k+27,8k^2+37k+31,8k^2+37k+33,8k^2+37k+35,8k^2+37k+37,8k^2+37k+41 \}$;
			\item $\mathrm{F}(S) = 8k^2+37k+41$;
			\item $\mathrm{g}(S) = 4k^2+22k+27$.
		\end{enumerate}
	\end{corollary}
	
	\begin{remark}\label{casesex-4}
		From Corollary~\ref{cor46b-4}, we deduce that, if $p=8k+13$ with $k\in\mathbb{N}$, then $\mathrm{F}(p,p+4,p+6,p+10,p+12,p+16)=\frac{p^2+11p+16}{8}$.
	\end{remark}

	\subsection{Septuplets}\label{septuplets}
	
	Let us recall that a prime septuplet is of the form $(p,p+2,p+6,p+8,p+12,p+18,p+20)$ or of the form $(p,p+2,p+8,p+12,p+14,p+18,p+20)$. This fact is improved in the following proposition.
	
	\begin{proposition}\label{prop51}
		We have that:
		\begin{enumerate}
			\item If $(p,p+2,p+6,p+8,p+12,p+18,p+20)$ is a prime septuplet, then $p=210k+11$, with $k\in\mathbb{N}$.
			\item If $(p,p+2,p+8,p+12,p+14,p+18,p+20)$ is a prime septuplet, then $p=210k+179$, with $k\in\mathbb{N}$.
		\end{enumerate}
	\end{proposition}
	
	However, to study the Frobenius problem for prime septuplets, we are going to consider two cases.
	\begin{enumerate}
		\item $(p,p+2,p+6,p+8,p+12,p+18,p+20)$ for $p=10k+11$, with $k\in\mathbb{N}$;
		\item $(p,p+2,p+8,p+12,p+14,p+18,p+20)$ for $p=10k+19$, with $k\in\mathbb{N}$.
	\end{enumerate}
	Moreover, we also analyse the following cases
	\begin{enumerate}
		\item $(p,p+2,p+6,p+8,p+12,p+18,p+20)$ for $p=10k+r$, with $k\in\mathbb{N}$ and $r\in\{13,15,17,19\}$;
		\item $(p,p+2,p+8,p+12,p+14,p+18,p+20)$ for $p=10k+r$, with $k\in\mathbb{N}$ and $r\in\{11,13,15,17\}$.
	\end{enumerate}
	
	For the time being, we will present some results based on computational evidence. For this purpose, we use the \texttt{GAP} package \texttt{numericalsgps} (see \cite{numericalsgps}).
	
	\begin{evidence}\label{evid51-1}
		If $k\in\mathbb{N}\setminus\{0,1\}$ and $S=\langle p,p+2,p+6,p+8,p+12,p+18,p+20 \rangle$ with $p=10k+11$, then
		\begin{enumerate}
			\item $\mathrm{t}(S) = 13$;
			\item $\mathrm{F}(S) = 10k^2+31k+20$;
			\item $\mathrm{g}(S) = 5k^2+20k+20$.
		\end{enumerate}
		In particular, $\mathrm{F}(S) = \frac{p^2+9p-20}{10}$.
	\end{evidence}
	
	\begin{evidence}\label{evid51-2}
		If $k\in\mathbb{N}\setminus\{0,1\}$ and $S=\langle p,p+2,p+6,p+8,p+12,p+18,p+20 \rangle$ with $p=10k+13$, then
		\begin{enumerate}
			\item $\mathrm{t}(S) = 6$;
			\item $\mathrm{F}(S) = 10k^2+43k+37$;
			\item $\mathrm{g}(S) = 5k^2+22k+25$.
		\end{enumerate}
		In particular, $\mathrm{F}(S) = \frac{p^2+17p-20}{10}$.
	\end{evidence}
	
	\begin{evidence}\label{evid51-3}
		If $k\in\mathbb{N}\setminus\{0\}$ and $S=\langle p,p+2,p+6,p+8,p+12,p+18,p+20 \rangle$ with $p=10k+15$, then
		\begin{enumerate}
			\item $\mathrm{t}(S) = 7$;
			\item $\mathrm{F}(S) = 10k^2+45k+43$;
			\item $\mathrm{g}(S) = 5k^2+24k+30$.
		\end{enumerate}
		In particular, $\mathrm{F}(S) = \frac{p^2+15p-20}{10}$.
	\end{evidence}
	
	\begin{evidence}\label{evid51-4}
		If $k\in\mathbb{N}\setminus\{0\}$ and $S=\langle p,p+2,p+6,p+8,p+12,p+18,p+20 \rangle$ with $p=10k+17$, then
		\begin{enumerate}
			\item $\mathrm{t}(S) = 9$;
			\item $\mathrm{F}(S) = 10k^2+47k+49$;
			\item $\mathrm{g}(S) = 5k^2+26k+35$.
		\end{enumerate}
		In particular, $\mathrm{F}(S) = \frac{p^2+13p-20}{10}$.
	\end{evidence}
	
	\begin{evidence}\label{evid51-5}
		If $k\in\mathbb{N}\setminus\{0\}$ and $S=\langle p,p+2,p+6,p+8,p+12,p+18,p+20 \rangle$ with $p=10k+19$, then
		\begin{enumerate}
			\item $\mathrm{t}(S) = 11$;
			\item $\mathrm{F}(S) = 10k^2+49k+55$;
			\item $\mathrm{g}(S) = 5k^2+28k+40$.
		\end{enumerate}
		In particular, $\mathrm{F}(S) = \frac{p^2+11p-20}{10}$.
	\end{evidence}
	
	\begin{evidence}\label{evid52-1}
		If $k\in\mathbb{N}\setminus\{0,1\}$ and $S=\langle p,p+2,p+8,p+12,p+14,p+18,p+20 \rangle$ with $p=10k+11$, then
		\begin{enumerate}
			\item $\mathrm{t}(S) = 13$;
			\item $\mathrm{F}(S) = 10k^2+31k+20$;
			\item $\mathrm{g}(S) = 5k^2+20k+20$.
		\end{enumerate}
		In particular, $\mathrm{F}(S) = \frac{p^2+9p-20}{10}$.
	\end{evidence}
	
	\begin{evidence}\label{evid52-2}
		If $k\in\mathbb{N}$ and $S=\langle p,p+2,p+8,p+12,p+14,p+18,p+20 \rangle$ with $p=10k+13$, then
		\begin{enumerate}
			\item $\mathrm{t}(S) = 6$;
			\item $\mathrm{F}(S) = 10k^2+43k+37$;
			\item $\mathrm{g}(S) = 5k^2+22k+25$.
		\end{enumerate}
		In particular, $\mathrm{F}(S) = \frac{p^2+17p-20}{10}$.
	\end{evidence}
	
	\begin{evidence}\label{evid52-3}
		If $k\in\mathbb{N}$ and $S=\langle p,p+2,p+8,p+12,p+14,p+18,p+20 \rangle$ with $p=10k+15$, then
		\begin{enumerate}
			\item $\mathrm{t}(S) = 7$;
			\item $\mathrm{F}(S) = 10k^2+45k+43$;
			\item $\mathrm{g}(S) = 5k^2+24k+30$.
		\end{enumerate}
		In particular, $\mathrm{F}(S) = \frac{p^2+15p-20}{10}$.
	\end{evidence}
	
	\begin{evidence}\label{evid52-4}
		If $k\in\mathbb{N}$ and $S=\langle p,p+2,p+8,p+12,p+14,p+18,p+20 \rangle$ with $p=10k+17$, then
		\begin{enumerate}
			\item $\mathrm{t}(S) = 9$;
			\item $\mathrm{F}(S) = 10k^2+47k+49$;
			\item $\mathrm{g}(S) = 5k^2+26k+35$.
		\end{enumerate}
		In particular, $\mathrm{F}(S) = \frac{p^2+13p-20}{10}$.
	\end{evidence}
	
	\begin{evidence}\label{evid52-5}
		If $k\in\mathbb{N}$ and $S=\langle p,p+2,p+8,p+12,p+14,p+18,p+20 \rangle$ with $p=10k+19$, then
		\begin{enumerate}
			\item $\mathrm{t}(S) = 11$;
			\item $\mathrm{F}(S) = 10k^2+49k+55$;
			\item $\mathrm{g}(S) = 5k^2+28k+40$.
		\end{enumerate}
		In particular, $\mathrm{F}(S) = \frac{p^2+11p-20}{10}$.
	\end{evidence}
	
	Let us observe that the results for $(p,p+2,p+6,p+8,p+12,p+18,p+20)$ and $(p,p+2,p+8,p+12,p+14,p+18,p+20)$ are very similar. In fact, they are practically the same.

	\subsection{Octuplets}\label{octuplets}
	
	Let us recall that a prime octuplet is giving by one of the following forms.
	\begin{itemize}
		\item $(p,p+2,p+6,p+12,p+14,p+20,p+24,p+26)$.
		\item $(p,p+2,p+6,p+8,p+12,p+18,p+20,p+26)$.
		\item $(p,p+6,p+8,p+14,p+18,p+20,p+24,p+26)$.
	\end{itemize}
	
	\begin{proposition}\label{prop61}
		We have that:
		\begin{enumerate}
			\item If $(p,p+2,p+6,p+12,p+14,p+20,p+24,p+26)$ is a prime octuplet, then $p=30k+17$, with $k\in\mathbb{N}$.
			\item If $(p,p+2,p+6,p+8,p+12,p+18,p+20,p+26)$ is a prime octuplet, then $p=210k+11$, with $k\in\mathbb{N}$.
			\item If $(p,p+6,p+8,p+14,p+18,p+20,p+24,p+26)$ is a prime octuplet, then $p=210k+173$, with $k\in\mathbb{N}$.
		\end{enumerate}
	\end{proposition}
	
	However, to study the Frobenius problem for prime octuplets, we are going to consider the following cases.
	\begin{enumerate}
		\item $(p,p+2,p+6,p+12,p+14,p+20,p+24,p+26)$ for $p=26k+r$,
		\item $(p,p+2,p+6,p+8,p+12,p+18,p+20,p+26)$ for $p=26k+r$,
		\item $(p,p+6,p+8,p+14,p+18,p+20,p+24,p+26)$ for $p=26k+r$,
	\end{enumerate}
	with $k\in\mathbb{N}$ and $r\in\{1,3,5,7,9,11,13,15,17,19,21,23,25\}$
	
	For the time being, we will present some results based on computational evidence. For this purpose, we again use the \texttt{GAP} package \texttt{numericalsgps}.
	
	\begin{evidence}\label{evid61-1}
		If $k\in\mathbb{N}\setminus\{0,1\}$ and $S=\langle p,p+2,p+6,p+12,p+14,p+20,p+24,p+26 \rangle$ with $p=26k+1$, then
		\begin{enumerate}
			\item $\mathrm{t}(S) = 15$;
			\item $\mathrm{F}(S) = 52k^2+28k-1$;
			\item $\mathrm{g}(S) = 26k^2+26k+8$.
		\end{enumerate}
		In particular, $\mathrm{F}(S) = \frac{p^2+12p-26}{13}$.
	\end{evidence}
	
	\begin{evidence}\label{evid61-2}
		If $k\in\mathbb{N}\setminus\{0\}$ and $S=\langle p,p+2,p+6,p+12,p+14,p+20,p+24,p+26 \rangle$ with $p=26k+3$, then
		\begin{enumerate}
			\item $\mathrm{t}(S) = 6$;
			\item $\mathrm{F}(S) = 52k^2+58k+4$;
			\item $\mathrm{g}(S) = 26k^2+30k+11$.
		\end{enumerate}
		In particular, $\mathrm{F}(S) = \frac{p^2+23p-26}{13}$.
	\end{evidence}
	
	\begin{evidence}\label{evid61-3}
		If $k\in\mathbb{N}\setminus\{0\}$ and $S=\langle p,p+2,p+6,p+12,p+14,p+20,p+24,p+26 \rangle$ with $p=26k+5$, then
		\begin{enumerate}
			\item $\mathrm{t}(S) = 6$;
			\item $\mathrm{F}(S) = 52k^2+62k+8$;
			\item $\mathrm{g}(S) = 26k^2+34k+14$.
		\end{enumerate}
		In particular, $\mathrm{F}(S) = \frac{p^2+21p-26}{13}$.
	\end{evidence}
	
	\begin{evidence}\label{evid61-4}
		If $k\in\mathbb{N}\setminus\{0\}$ and $S=\langle p,p+2,p+6,p+12,p+14,p+20,p+24,p+26 \rangle$ with $p=26k+7$, then
		\begin{enumerate}
			\item $\mathrm{t}(S) = 8$;
			\item $\mathrm{F}(S) = 52k^2+66k+12$;
			\item $\mathrm{g}(S) = 26k^2+38k+17$.
		\end{enumerate}
		In particular, $\mathrm{F}(S) = \frac{p^2+19p-26}{13}$.
	\end{evidence}
	
	\begin{evidence}\label{evid61-5}
		If $k\in\mathbb{N}\setminus\{0\}$ and $S=\langle p,p+2,p+6,p+12,p+14,p+20,p+24,p+26 \rangle$ with $p=26k+9$, then
		\begin{enumerate}
			\item $\mathrm{t}(S) = 10$;
			\item $\mathrm{F}(S) = 52k^2+70k+16$;
			\item $\mathrm{g}(S) = 26k^2+42k+20$.
		\end{enumerate}
		In particular, $\mathrm{F}(S) = \frac{p^2+17p-26}{13}$.
	\end{evidence}
	
	\begin{evidence}\label{evid61-6}
		If $k\in\mathbb{N}\setminus\{0\}$ and $S=\langle p,p+2,p+6,p+12,p+14,p+20,p+24,p+26 \rangle$ with $p=26k+11$, then
		\begin{enumerate}
			\item $\mathrm{t}(S) = 12$;
			\item $\mathrm{F}(S) = 52k^2+74k+20$;
			\item $\mathrm{g}(S) = 26k^2+46k+23$.
		\end{enumerate}
		In particular, $\mathrm{F}(S) = \frac{p^2+15p-26}{13}$.
	\end{evidence}
	
	\begin{evidence}\label{evid61-7}
		If $k\in\mathbb{N}\setminus\{0\}$ and $S=\langle p,p+2,p+6,p+12,p+14,p+20,p+24,p+26 \rangle$ with $p=26k+13$, then
		\begin{enumerate}
			\item $\mathrm{t}(S) = 14$;
			\item $\mathrm{F}(S) = 52k^2+78k+24$;
			\item $\mathrm{g}(S) = 26k^2+50k+26$.
		\end{enumerate}
		In particular, $\mathrm{F}(S) = \frac{p^2+13p-26}{13}$.
	\end{evidence}
	
	\begin{evidence}\label{evid61-8}
		If $k\in\mathbb{N}\setminus\{0\}$ and $S=\langle p,p+2,p+6,p+12,p+14,p+20,p+24,p+26 \rangle$ with $p=26k+15$, then
		\begin{enumerate}
			\item $\mathrm{t}(S) = 9$;
			\item $\mathrm{F}(S) = 52k^2+108k+43$;
			\item $\mathrm{g}(S) = 26k^2+54k+30$.
		\end{enumerate}
		In particular, $\mathrm{F}(S) = \frac{p^2+24p-26}{13}$.
	\end{evidence}
	
	\begin{evidence}\label{evid61-9}
		If $k\in\mathbb{N}\setminus\{0\}$ and $S=\langle p,p+2,p+6,p+12,p+14,p+20,p+24,p+26 \rangle$ with $p=26k+17$, then
		\begin{enumerate}
			\item $\mathrm{t}(S) = 5$;
			\item $\mathrm{F}(S) = 52k^2+112k+49$;
			\item $\mathrm{g}(S) = 26k^2+58k+35$.
		\end{enumerate}
		In particular, $\mathrm{F}(S) = \frac{p^2+22p-26}{13}$.
	\end{evidence}
	
	\begin{evidence}\label{evid61-10}
		If $k\in\mathbb{N}\setminus\{0\}$ and $S=\langle p,p+2,p+6,p+12,p+14,p+20,p+24,p+26 \rangle$ with $p=26k+19$, then
		\begin{enumerate}
			\item $\mathrm{t}(S) = 7$;
			\item $\mathrm{F}(S) = 52k^2+116k+55$;
			\item $\mathrm{g}(S) = 26k^2+62k+40$.
		\end{enumerate}
		In particular, $\mathrm{F}(S) = \frac{p^2+20p-26}{13}$.
	\end{evidence}
	
	\begin{evidence}\label{evid61-11}
		If $k\in\mathbb{N}\setminus\{0\}$ and $S=\langle p,p+2,p+6,p+12,p+14,p+20,p+24,p+26 \rangle$ with $p=26k+21$, then
		\begin{enumerate}
			\item $\mathrm{t}(S) = 9$;
			\item $\mathrm{F}(S) = 52k^2+120k+61$;
			\item $\mathrm{g}(S) = 26k^2+66k+45$.
		\end{enumerate}
		In particular, $\mathrm{F}(S) = \frac{p^2+18p-26}{13}$.
	\end{evidence}
	
	\begin{evidence}\label{evid61-12}
		If $k\in\mathbb{N}\setminus\{0\}$ and $S=\langle p,p+2,p+6,p+12,p+14,p+20,p+24,p+26 \rangle$ with $p=26k+23$, then
		\begin{enumerate}
			\item $\mathrm{t}(S) = 11$;
			\item $\mathrm{F}(S) = 52k^2+124k+67$;
			\item $\mathrm{g}(S) = 26k^2+70k+50$.
		\end{enumerate}
		In particular, $\mathrm{F}(S) = \frac{p^2+16p-26}{13}$.
	\end{evidence}
	
	\begin{evidence}\label{evid61-13}
		If $k\in\mathbb{N}\setminus\{0\}$ and $S=\langle p,p+2,p+6,p+12,p+14,p+20,p+24,p+26 \rangle$ with $p=26k+25$, then
		\begin{enumerate}
			\item $\mathrm{t}(S) = 13$;
			\item $\mathrm{F}(S) = 52k^2+128k+73$;
			\item $\mathrm{g}(S) = 26k^2+74k+55$.
		\end{enumerate}
		In particular, $\mathrm{F}(S) = \frac{p^2+14p-26}{13}$.
	\end{evidence}
	
	\begin{evidence}\label{evid62-1}
		If $k\in\mathbb{N}\setminus\{0\}$ and $S=\langle p,p+2,p+6,p+8,p+12,p+18,p+20,p+26 \rangle$ with $p=26k+1$, then
		\begin{enumerate}
			\item $\mathrm{t}(S) = 7$;
			\item $\mathrm{F}(S) = 52k^2+54k-2$;
			\item $\mathrm{g}(S) = 26k^2+32k+3$.
		\end{enumerate}
		In particular, $\mathrm{F}(S) = \frac{p^2+25p-52}{13}$.
	\end{evidence}
	
	\begin{evidence}\label{evid62-2}
		If $k\in\mathbb{N}\setminus\{0\}$ and $S=\langle p,p+2,p+6,p+8,p+12,p+18,p+20,p+26 \rangle$ with $p=26k+3$, then
		\begin{enumerate}
			\item $\mathrm{t}(S) = 8$;
			\item $\mathrm{F}(S) = 52k^2+58k+4$;
			\item $\mathrm{g}(S) = 26k^2+36k+6$.
		\end{enumerate}
		In particular, $\mathrm{F}(S) = \frac{p^2+23p-26}{13}$.
	\end{evidence}
	
	\begin{evidence}\label{evid62-3}
		If $k\in\mathbb{N}\setminus\{0\}$ and $S=\langle p,p+2,p+6,p+8,p+12,p+18,p+20,p+26 \rangle$ with $p=26k+5$, then
		\begin{enumerate}
			\item $\mathrm{t}(S) = 10$;
			\item $\mathrm{F}(S) = 52k^2+62k+8$;
			\item $\mathrm{g}(S) = 26k^2+40k+9$.
		\end{enumerate}
		In particular, $\mathrm{F}(S) = \frac{p^2+21p-26}{13}$.
	\end{evidence}
	
	\begin{evidence}\label{evid62-4}
		If $k\in\mathbb{N}\setminus\{0\}$ and $S=\langle p,p+2,p+6,p+8,p+12,p+18,p+20,p+26 \rangle$ with $p=26k+7$, then
		\begin{enumerate}
			\item $\mathrm{t}(S) = 12$;
			\item $\mathrm{F}(S) = 52k^2+66k+12$;
			\item $\mathrm{g}(S) = 26k^2+44k+12$.
		\end{enumerate}
		In particular, $\mathrm{F}(S) = \frac{p^2+19p-26}{13}$.
	\end{evidence}
	
	\begin{evidence}\label{evid62-5}
		If $k\in\mathbb{N}\setminus\{0\}$ and $S=\langle p,p+2,p+6,p+8,p+12,p+18,p+20,p+26 \rangle$ with $p=26k+9$, then
		\begin{enumerate}
			\item $\mathrm{t}(S) = 7$;
			\item $\mathrm{F}(S) = 52k^2+96k+25$;
			\item $\mathrm{g}(S) = 26k^2+48k+16$.
		\end{enumerate}
		In particular, $\mathrm{F}(S) = \frac{p^2+30p-26}{13}$.
	\end{evidence}
	
	\begin{evidence}\label{evid62-6}
		If $k\in\mathbb{N}\setminus\{0\}$ and $S=\langle p,p+2,p+6,p+8,p+12,p+18,p+20,p+26 \rangle$ with $p=26k+11$, then
		\begin{enumerate}
			\item $\mathrm{t}(S) = 9$;
			\item $\mathrm{F}(S) = 52k^2+100k+27$;
			\item $\mathrm{g}(S) = 26k^2+52k+19$.
		\end{enumerate}
		In particular, $\mathrm{F}(S) = \frac{p^2+28p-78}{13}$.
	\end{evidence}
	
	\begin{evidence}\label{evid62-7}
		If $k\in\mathbb{N}$ and $S=\langle p,p+2,p+6,p+8,p+12,p+18,p+20,p+26 \rangle$ with $p=26k+13$, then
		\begin{enumerate}
			\item $\mathrm{t}(S) = 6$;
			\item $\mathrm{F}(S) = 52k^2+104k+37$;
			\item $\mathrm{g}(S) = 26k^2+56k+24$.
		\end{enumerate}
		In particular, $\mathrm{F}(S) = \frac{p^2+26p-26}{13}$.
	\end{evidence}
	
	\begin{evidence}\label{evid62-8}
		If $k\in\mathbb{N}$ and $S=\langle p,p+2,p+6,p+8,p+12,p+18,p+20,p+26 \rangle$ with $p=26k+15$, then
		\begin{enumerate}
			\item $\mathrm{t}(S) = 7$;
			\item $\mathrm{F}(S) = 52k^2+108k+43$;
			\item $\mathrm{g}(S) = 26k^2+60k+28$.
		\end{enumerate}
		In particular, $\mathrm{F}(S) = \frac{p^2+24p-26}{13}$.
	\end{evidence}
	
	\begin{evidence}\label{evid62-9}
		If $k\in\mathbb{N}$ and $S=\langle p,p+2,p+6,p+8,p+12,p+18,p+20,p+26 \rangle$ with $p=26k+17$, then
		\begin{enumerate}
			\item $\mathrm{t}(S) = 9$;
			\item $\mathrm{F}(S) = 52k^2+112k+49$;
			\item $\mathrm{g}(S) = 26k^2+64k+33$.
		\end{enumerate}
		In particular, $\mathrm{F}(S) = \frac{p^2+22p-26}{13}$.
	\end{evidence}
	
	\begin{evidence}\label{evid62-10}
		If $k\in\mathbb{N}$ and $S=\langle p,p+2,p+6,p+8,p+12,p+18,p+20,p+26 \rangle$ with $p=26k+19$, then
		\begin{enumerate}
			\item $\mathrm{t}(S) = 11$;
			\item $\mathrm{F}(S) = 52k^2+116k+55$;
			\item $\mathrm{g}(S) = 26k^2+68k+38$.
		\end{enumerate}
		In particular, $\mathrm{F}(S) = \frac{p^2+20p-26}{13}$.
	\end{evidence}
	
	\begin{evidence}\label{evid62-11}
		If $k\in\mathbb{N}$ and $S=\langle p,p+2,p+6,p+8,p+12,p+18,p+20,p+26 \rangle$ with $p=26k+21$, then
		\begin{enumerate}
			\item $\mathrm{t}(S) = 9$;
			\item $\mathrm{F}(S) = 52k^2+120k+61$;
			\item $\mathrm{g}(S) = 26k^2+72k+43$.
		\end{enumerate}
		In particular, $\mathrm{F}(S) = \frac{p^2+18p-26}{13}$.
	\end{evidence}
	
	\begin{evidence}\label{evid62-12}
		If $k\in\mathbb{N}$ and $S=\langle p,p+2,p+6,p+8,p+12,p+18,p+20,p+26 \rangle$ with $p=26k+23$, then
		\begin{enumerate}
			\item $\mathrm{t}(S) = 8$;
			\item $\mathrm{F}(S) = 52k^2+150k+88$;
			\item $\mathrm{g}(S) = 26k^2+76k+49$.
		\end{enumerate}
		In particular, $\mathrm{F}(S) = \frac{p^2+29p-52}{13}$.
	\end{evidence}
	
	\begin{evidence}\label{evid62-13}
		If $k\in\mathbb{N}$ and $S=\langle p,p+2,p+6,p+8,p+12,p+18,p+20,p+26 \rangle$ with $p=26k+25$, then
		\begin{enumerate}
			\item $\mathrm{t}(S) = 8$;
			\item $\mathrm{F}(S) = 52k^2+154k+98$;
			\item $\mathrm{g}(S) = 26k^2+80k+55$.
		\end{enumerate}
		In particular, $\mathrm{F}(S) = \frac{p^2+27p-26}{13}$.
	\end{evidence}
	
	\begin{evidence}\label{evid63-1}
		If $k\in\mathbb{N}\setminus\{0\}$ and $S=\langle p,p+6,p+8,p+14,p+18,p+20,p+24,p+26 \rangle$ with $p=26k+1$, then
		\begin{enumerate}
			\item $\mathrm{t}(S) = 7$;
			\item $\mathrm{F}(S) = 52k^2+54k+12$;
			\item $\mathrm{g}(S) = 26k^2+32k+9$.
		\end{enumerate}
		In particular, $\mathrm{F}(S) = \frac{p^2+25p+130}{13}$.
	\end{evidence}
	
	\begin{evidence}\label{evid63-2}
		If $k\in\mathbb{N}\setminus\{0\}$ and $S=\langle p,p+6,p+8,p+14,p+18,p+20,p+24,p+26 \rangle$ with $p=26k+3$, then
		\begin{enumerate}
			\item $\mathrm{t}(S) = 8$;
			\item $\mathrm{F}(S) = 52k^2+58k+16$;
			\item $\mathrm{g}(S) = 26k^2+36k+12$.
		\end{enumerate}
		In particular, $\mathrm{F}(S) = \frac{p^2+23p+130}{13}$.
	\end{evidence}
	
	\begin{evidence}\label{evid63-3}
		If $k\in\mathbb{N}\setminus\{0\}$ and $S=\langle p,p+6,p+8,p+14,p+18,p+20,p+24,p+26 \rangle$ with $p=26k+5$, then
		\begin{enumerate}
			\item $\mathrm{t}(S) = 10$;
			\item $\mathrm{F}(S) = 52k^2+62k+20$;
			\item $\mathrm{g}(S) = 26k^2+40k+15$.
		\end{enumerate}
		In particular, $\mathrm{F}(S) = \frac{p^2+21p+130}{13}$.
	\end{evidence}
	
	\begin{evidence}\label{evid63-4}
		If $k\in\mathbb{N}\setminus\{0\}$ and $S=\langle p,p+6,p+8,p+14,p+18,p+20,p+24,p+26 \rangle$ with $p=26k+7$, then
		\begin{enumerate}
			\item $\mathrm{t}(S) = 12$;
			\item $\mathrm{F}(S) = 52k^2+66k+24$;
			\item $\mathrm{g}(S) = 26k^2+44k+18$.
		\end{enumerate}
		In particular, $\mathrm{F}(S) = \frac{p^2+19p+130}{13}$.
	\end{evidence}
	
	\begin{evidence}\label{evid63-5}
		If $k\in\mathbb{N}\setminus\{0\}$ and $S=\langle p,p+6,p+8,p+14,p+18,p+20,p+24,p+26 \rangle$ with $p=26k+9$, then
		\begin{enumerate}
			\item $\mathrm{t}(S) = 7$;
			\item $\mathrm{F}(S) = 52k^2+96k+37$;
			\item $\mathrm{g}(S) = 26k^2+48k+22$.
		\end{enumerate}
		In particular, $\mathrm{F}(S) = \frac{p^2+30p+130}{13}$.
	\end{evidence}
	
	\begin{evidence}\label{evid63-6}
		If $k\in\mathbb{N}\setminus\{0\}$ and $S=\langle p,p+6,p+8,p+14,p+18,p+20,p+24,p+26 \rangle$ with $p=26k+11$, then
		\begin{enumerate}
			\item $\mathrm{t}(S) = 9$;
			\item $\mathrm{F}(S) = 52k^2+100k+43$;
			\item $\mathrm{g}(S) = 26k^2+52k+25$.
		\end{enumerate}
		In particular, $\mathrm{F}(S) = \frac{p^2+28p+130}{13}$.
	\end{evidence}
	
	\begin{evidence}\label{evid63-7}
		If $k\in\mathbb{N}$ and $S=\langle p,p+6,p+8,p+14,p+18,p+20,p+24,p+26 \rangle$ with $p=26k+13$, then
		\begin{enumerate}
			\item $\mathrm{t}(S) = 6$;
			\item $\mathrm{F}(S) = 52k^2+104k+49$;
			\item $\mathrm{g}(S) = 26k^2+56k+30$.
		\end{enumerate}
		In particular, $\mathrm{F}(S) = \frac{p^2+26p+130}{13}$.
	\end{evidence}
	
	\begin{evidence}\label{evid63-8}
		If $k\in\mathbb{N}$ and $S=\langle p,p+6,p+8,p+14,p+18,p+20,p+24,p+26 \rangle$ with $p=26k+15$, then
		\begin{enumerate}
			\item $\mathrm{t}(S) = 7$;
			\item $\mathrm{F}(S) = 52k^2+108k+55$;
			\item $\mathrm{g}(S) = 26k^2+60k+34$.
		\end{enumerate}
		In particular, $\mathrm{F}(S) = \frac{p^2+24p+130}{13}$.
	\end{evidence}
	
	\begin{evidence}\label{evid63-9}
		If $k\in\mathbb{N}$ and $S=\langle p,p+6,p+8,p+14,p+18,p+20,p+24,p+26 \rangle$ with $p=26k+17$, then
		\begin{enumerate}
			\item $\mathrm{t}(S) = 9$;
			\item $\mathrm{F}(S) = 52k^2+112k+61$;
			\item $\mathrm{g}(S) = 26k^2+64k+39$.
		\end{enumerate}
		In particular, $\mathrm{F}(S) = \frac{p^2+22p+130}{13}$.
	\end{evidence}
	
	\begin{evidence}\label{evid63-10}
		If $k\in\mathbb{N}$ and $S=\langle p,p+6,p+8,p+14,p+18,p+20,p+24,p+26 \rangle$ with $p=26k+19$, then
		\begin{enumerate}
			\item $\mathrm{t}(S) = 11$;
			\item $\mathrm{F}(S) = 52k^2+116k+67$;
			\item $\mathrm{g}(S) = 26k^2+68k+44$.
		\end{enumerate}
		In particular, $\mathrm{F}(S) = \frac{p^2+20p+130}{13}$.
	\end{evidence}
	
	\begin{evidence}\label{evid63-11}
		If $k\in\mathbb{N}$ and $S=\langle p,p+6,p+8,p+14,p+18,p+20,p+24,p+26 \rangle$ with $p=26k+21$, then
		\begin{enumerate}
			\item $\mathrm{t}(S) = 13$;
			\item $\mathrm{F}(S) = 52k^2+120k+73$;
			\item $\mathrm{g}(S) = 26k^2+72k+49$.
		\end{enumerate}
		In particular, $\mathrm{F}(S) = \frac{p^2+18p+130}{13}$.
	\end{evidence}
	
	\begin{evidence}\label{evid63-12}
		If $k\in\mathbb{N}$ and $S=\langle p,p+6,p+8,p+14,p+18,p+20,p+24,p+26 \rangle$ with $p=26k+23$, then
		\begin{enumerate}
			\item $\mathrm{t}(S) = 8$;
			\item $\mathrm{F}(S) = 52k^2+150k+102$;
			\item $\mathrm{g}(S) = 26k^2+76k+55$.
		\end{enumerate}
		In particular, $\mathrm{F}(S) = \frac{p^2+29p+130}{13}$.
	\end{evidence}
	
	\begin{evidence}\label{evid63-13}
		If $k\in\mathbb{N}$ and $S=\langle p,p+6,p+8,p+14,p+18,p+20,p+24,p+26 \rangle$ with $p=26k+25$, then
		\begin{enumerate}
			\item $\mathrm{t}(S) = 8$;
			\item $\mathrm{F}(S) = 52k^2+154k+110$;
			\item $\mathrm{g}(S) = 26k^2+80k+61$.
		\end{enumerate}
		In particular, $\mathrm{F}(S) = \frac{p^2+27p+130}{13}$.
	\end{evidence}

	\section*{Acknowledgement}
	
	Both authors are supported by the project MTM2017-84890-P (funded by Mi\-nis\-terio de Econom\'{\i}a, Industria y Competitividad and Fondo Europeo de Desarrollo Regional FEDER) and by the Junta de Andaluc\'{\i}a Grant Number FQM-343.

\end{document}